\documentclass[11pt,letterpaper]{amsart}
\usepackage{hyperref}
\usepackage{enumitem}
\usepackage{amsmath}
\usepackage{amssymb}
\usepackage{amsthm}
\usepackage{amsrefs}
\usepackage{amsfonts}
\usepackage[dvipsnames]{xcolor}
\usepackage{mathtools}
\usepackage{stackengine}

\makeatletter
\@namedef{subjclassname@2020}{%
  \textup{2020} MSC}
\makeatother

\hypersetup{
	colorlinks   = true, 
	urlcolor     = blue, 
	linkcolor    = purple, 
	citecolor   = blue 
}

\def\XXint#1#2#3{{\setbox0=\hbox{$#1{#2#3}{\int}$ }
\vcenter{\hbox{$#2#3$ }}\kern-.6\wd0}}

\makeatletter
\newcommand*{\rom}[1]{\expandafter\@slowromancap\romannumeral #1@}

\newcommand{\SL}{\mathrm{SL}^\pm}
\newcommand{\sL}{\mathfrak{sl}}

\newcommand{\M}{\mathrm{M}}
\newcommand{\X}{\mathcal{X}}

\newcommand{\R}{\mathbb{R}}
\newcommand{\Q}{\mathbb{Q}}
\newcommand{\T}{\mathbb{T}}
\newcommand{\e}{\varepsilon}
\newcommand{\cK}{\mathcal{K}}
\newcommand{\QQ}{\mathcal{Q}}

\newcommand{\Z}{\mathbb{Z}}

\newcommand{\C}{\mathcal{C}}
\newcommand{\Complex}{\mathbb{C}}
\newcommand{\cB}{\mathcal{B}}

\newcommand{\N}{\mathbb{N}}

\newcommand{\sm}{\setminus}

\newcommand{\bthm}{\begin{thm}}
\newcommand{\ethm}{\end{thm}}
\newcommand{\bproof}{\begin{proof}}
\newcommand{\eproof}{\end{proof}}
\newcommand{\blem}{\begin{lem}}
\newcommand{\elem}{\end{lem}}
\newcommand{\brem}{\begin{rem}}
\newcommand{\erem}{\end{rem}}
\newcommand{\eeqn}{\end{equation}}
\newcommand{\eeqnn}{\end{equation*}}
\newcommand{\beqn}{\begin{equation}}
\newcommand{\beqnn}{\begin{equation*}}
\newcommand{\eprop}{\end{prop}}
\newcommand{\eexm}{\end{exm}}
\newcommand{\enexm}{\end{nexm}}
\newcommand{\ecor}{\end{cor}}
\newcommand{\bcor}{\begin{cor}}
\newcommand{\bexm}{\begin{exm}}
\newcommand{\bnexm}{\begin{nexm}}
\newcommand{\bprop}{\begin{prop}}
\newcommand{\bdefn}{\begin{defn}}
\newcommand{\edefn}{\end{defn}}
\newcommand{\benum}{\begin{enumerate}}
\newcommand{\eenum}{\end{enumerate}}

\newcommand{\Mat}{\mathrm{M}_{m \times n}(\R)}
\newcommand{\Bad}{\mathrm{Bad}(m \times n)}
\newcommand{\Badb}{\mathrm{Bad}^{b}(m \times n)}
\newcommand{\Badzero}{\mathrm{Bad}^{zero}(m \times n)}
\newcommand{\BadA}{\mathrm{Bad}_{A}(m \times n)}
\newcommand{\Dirichlet}{\mathrm{Dirichlet}(m \times n)}
\newcommand{\DirichletA}{\mathrm{Dirichlet}_A(m \times n)}
\newcommand{\Dirichletb}{\mathrm{Dirichlet}^b(m \times n)}
\newcommand{\Dirichletzero}{\mathrm{Dirichlet}^{zero}(m \times n)}
\newcommand{\GenA}{\mathrm{Generic}_A(m \times n)}
\newcommand{\Genb}{\mathrm{Generic}^b(m \times n)}
\newcommand{\Gen}{\mathrm{Generic}}

\newcommand{\ZZ}{{\mathcal Z}}
\newcommand{\MM}{{\mathcal{M}}}
\newcommand{\Gr}{{\mathrm{Gr}}}
\newcommand{\EE}{\mathbb{E}}
\newcommand{\av}{\xi} 
\newcommand{\Prob}{\mathrm{Prob}}
\newcommand{\supp}{\operatorname{supp}}

\title[Non expanding random walks]{Non-Expanding Random walks on Homogeneous spaces and Diophantine approximation}

\setcounter{tocdepth}{1}
\begin{document}
\theoremstyle{plain}
\newtheorem{thm}{Theorem}[section]
\newtheorem{lem}[thm]{Lemma}
\newtheorem{prop}[thm]{Proposition}
\newtheorem{cor}[thm]{Corollary}

\theoremstyle{definition}
\newtheorem{defn}[thm]{Definition}
\newtheorem{exm}[thm]{Example}
\newtheorem{nexm}[thm]{Non Example}
\newtheorem{prob}[thm]{Problem}

\theoremstyle{remark}
\newtheorem{rem}[thm]{Remark}

\author{Gaurav Aggarwal}
\address{\textbf{Gaurav Aggarwal} \\
School of Mathematics,
Tata Institute of Fundamental Research, Mumbai, India 400005}
\email{gaurav@math.tifr.res.in}

\author{Anish Ghosh}
\address{\textbf{Anish Ghosh} \\
School of Mathematics,
Tata Institute of Fundamental Research, Mumbai, India 400005}
\email{ghosh@math.tifr.res.in}

\date{}

\thanks{ A.\ G.\ gratefully acknowledges support from a grant from the Infosys foundation to the Infosys Chandrasekharan Random Geometry Centre. G. \ A.\ and  A.\ G.\ gratefully acknowledge a grant from the Department of Atomic Energy, Government of India, under project $12-R\&D-TFR-5.01-0500$. }

\subjclass[2020]{37A17, 11K60}
\keywords{Diophantine approximation, ergodic theory, high rank diagonal actions, flows on homogeneous spaces}

\begin{abstract}
We study non-expanding random walks on the space of affine lattices and establish a new classification theorem for stationary measures. Further, we prove a theorem that relates the genericity with respect to these random walks to Birkhoff genericity. Finally, we apply these theorems to obtain several results in inhomogeneous Diophantine approximation, especially on fractals.

\end{abstract}

\maketitle

\tableofcontents

\section{Introduction}
In this paper, we establish a new classification theorem for stationary measures on the space of affine lattices. We next prove a dynamical result relating ``random genericity" of random walks, and Birkhoff genericity of diagonal flows, which we believe is interesting in its own right and which we use to partially answer a question of Solan and Wieser. Finally, we use our classification and dynamical results in turn to study inhomogeneous Diophantine approximation on fractals.\\ 

Beginning with the groundbreaking work of Bourgain, Furman, Lindenstrauss and Mozes \cite{BFLM} and Benoist and Quint \cite{BQ1, BQbook}, several important developments have occurred in the study of random walks and stationary measures on homogeneous spaces. We refer the reader in particular to the landmark works of Eskin and Mirzakhani \cite{EskinMirzakhani}, Eskin and Lindenstrauss \cite{EskLin1, EskLin2} and Simmons and Weiss \cite{SimmonsWeiss}. Further recent progress was made by Prohaska and Shi  \cite{ProhaskaSert},  Prohaska, Sert and Shi \cite{ProhaskaSertShi}, and B\'{e}nard and deSaxc\'{e} \cite{BenarddeSaxce}, Sargent and Shapira \cite{SargentShapira}, Gorodnik, Li and Sert \cite{GorodnikLiSert} among others. The main novelty of our work is that this is the first paper where a random walk with a contracting direction is discussed. Thus, it doesn't fit into the framework of any earlier work, where the \emph{expanding property} is the basic assumption of the random walk being studied. Consequently, many new ideas are needed to deal with the problem. This constitutes the first part of the paper.\\

The second part of the paper discusses the application of the above classification theorem to homogeneous dynamics. An essential ingredient to derive the latter from the former involves studying the relation between the equidistribution of a random walk starting from a point with the Birkhoff genericity of the point. The latter is a useful and well studied property in dynamics; a definition follows below. Theorem \ref{Random Genericity} essentially states that both these properties are equivalent. This partially answers a question posed in \cite{solanwieser}.\\ 

In the third part of the paper, we present several applications of Theorems \ref{Main Random walk} and \ref{Random Genericity} to inhomogeneous Diophantine approximation, among other things, we answer a question posed in \cite{MRS}. Simmons and Weiss \cite{SimmonsWeiss} were the first to realize that random walks can be profitably employed to study generic Diophantine properties of a wide class of measures, for instance, the authors were able to show that the set of badly approximable numbers are given zero measure by a wide class of self similar measures, thereby improving the work of Einsiedler, Fishman and Shapira \cite{EinsiedlerFishmanShapira} who established this fact for the middle third Cantor set. We are able to provide a comprehensive bouquet of applications of our theorems to inhomogeneous Diophantine approximation, including the more difficult `singly metric' cases where one of the variables is fixed. Our results are new even for the middle third Cantor set.\\

We now describe our results in detail.

\subsection*{Random Walks} 
Fix $m,n \geq 1$ and let $\SL_{m+n}(\R)$ denote the $(m+n) \times (m+n)$ matrices of determinant $\pm 1$. We set $G= \SL_{m+n}(\R) \ltimes \R^{m + n}$, $\Gamma= \SL_{m + n}(\Z) \ltimes \Z^{m +n}$ and denote by $\X$ the finite volume quotient $G/\Gamma$. This quotient admits a natural description as the space of \emph{affine} lattices, i.e. lattices in $\R^{m+n}$ accompanied by a shift. We also define $G'= \SL_{m + n}(\R)$, $\Gamma'= \SL_{m +n}(\R)$ and denote by $\X'$ the finite volume quotient space $G'/ \Gamma'$. This quotient admits a natural description as the space of unimodular lattices in $\R^{m+n}$ and $\X$ is a torus bundle over $\X'$. Let $\mu_\X$ (resp. $\mu_{\X'}$) denote the unique $G$ (resp. $G'$) invariant probability measure on $\X$ (resp. $\X'$). We denote by $\pi: \X \rightarrow \X'$ the natural projection. It is easy to see that $\pi_*(\mu_{\X})= \mu_{\X'}$. Also, denote by $\pi_G: G \rightarrow G'$ the natural projection.\\

Let $\nu$ be a measure on $G$ supported on a compact set $E$, such that for all $e \in E$, we have
\begin{align}
\label{eq: def e}
    e^{-1}= \left(\begin{pmatrix}
        \rho_e O_e \\ & \rho_{e}^{-m/n} I_{n}
    \end{pmatrix}, \begin{pmatrix}
         -w_e \\ 0
    \end{pmatrix} \right),
\end{align}
where $\rho_e <1$ and $O_e \in O(m)$ and $w_e \in \R^m$. Here, $O(m)$ denotes the orthogonal group with respect to some fixed chosen norm. Also, assume that $E$ is not virtually contained in $\SL_{m+n}(\R)$, i.e. there does not exist $g \in G$ such that $gEg^{-1} \subset \SL_{m+n}(\R) \ltimes \{0\}$.\\

The first main theorem of this paper is:

\begin{thm}
\label{Main Random walk}
    Let $\nu$ be a measure on $G$ as above. Then any probability measure $\mu$ on $\X$ which is $\nu$-stationary and satisfies $\pi_* \mu= \mu_{\X'}$, must equal $\mu_{\X}$.
\end{thm}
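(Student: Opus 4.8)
The plan is to prove the theorem by Fourier analysis along the torus fibres of $\pi\colon\X\to\X'$. Since $\pi_*\mu=\mu_{\X'}$, we may disintegrate $\mu=\int_{\X'}\mu_{g}\,d\mu_{\X'}(g\Gamma')$ with $\mu_g$ a probability measure on the fibre $\R^{m+n}/g\Z^{m+n}$ over $g\Gamma'$, and for $\xi\in\Z^{m+n}$ set $\hat\mu(g,\xi)=\int e^{2\pi i\langle g^{-T}\xi,v\rangle}\,d\mu_g(v)$; this is well defined up to the relabelling of $\xi$ by $\SL_{m+n}(\Z)$, satisfies $|\hat\mu(g,\xi)|\le 1$, and $\hat\mu(g,0)=1$. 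The conclusion $\mu=\mu_{\X}$ is then equivalent to $\hat\mu(\cdot,\xi)=0$ for $\mu_{\X'}$-a.e.\ $g$ and every $\xi\neq 0$. Writing $e=(A_e,u_e)$ with $A_e=\mathrm{diag}(\rho_e^{-1}O_e^{-1},\rho_e^{m/n}I_n)$ and $u_e=(\rho_e^{-1}O_e^{-1}w_e,0)$, the stationarity $\mu=\int_G e_*\mu\,d\nu(e)$ becomes the twisted relation
\begin{equation*}
\hat\mu(g,\xi)=\int_G e^{2\pi i\langle g^{-T}\xi,u_e\rangle}\,\hat\mu(A_e^{-1}g,\xi)\,d\nu(e).
\end{equation*}

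The key structural point is how the phase evolves under iteration. The translation $u_e$ lies in the expanding block $\R^m\times\{0\}$, so $\langle g^{-T}\xi,u_e\rangle=\rho_e^{-1}\langle O_e\,\eta(g),w_e\rangle$ depends on $g$ only through $\eta(g):=\pi_1(g^{-T}\xi)\in\R^m$, the projection to the first $m$ coordinates; and under the base walk one computes $\eta(A_e^{-1}g)=\rho_e^{-1}O_e\,\eta(g)$, so $|\eta|$ grows geometrically because $\rho_e<1$. (Also $\eta(g)\neq0$ for $\mu_{\X'}$-a.e.\ $g$ whenever $\xi\neq0$, so no case distinction between the first and last coordinate blocks of $\xi$ is needed.) Iterating the twisted relation $k$ times yields $\hat\mu(g,\xi)=\int_{G^k}e^{2\pi i\Psi_k(g,\bar e)}\hat\mu(T_{\bar e}g,\xi)\,d\nu^{\otimes k}(\bar e)$, where $T_{\bar e}g$ is the $k$-step base walk and the accumulated phase $\Psi_k$ is a cocycle sum whose $j$-th summand has amplitude comparable to $|\eta(g)|\prod_{i<j}\rho_{e_i}^{-1}$ — an exponentially growing frequency. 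This is the mechanism, in the spirit of \cite{BFLM}, that should force the Fourier coefficients to vanish, once one knows the phase cocycle is genuinely oscillating.

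This last point is exactly where the hypothesis that $E$ is not virtually contained in $\SL_{m+n}(\R)\ltimes\{0\}$ enters. Conjugating by $g_0=(B,b)$ and setting $c=B^{-1}b$, one sees $g_0Eg_0^{-1}\subset\SL_{m+n}(\R)\ltimes\{0\}$ if and only if $u_e=(A_e-I)c$ for all $e\in E$, which (since $\rho_e^{m/n}\neq1$) forces the $\R^n$-component of $c$ to vanish and reduces to the existence of $c_1\in\R^m$ with $w_e=(I-\rho_e O_e)c_1$ for all $e\in E$; the hypothesis is the negation of this, i.e.\ the $S^1$-valued cocycle $e\mapsto e^{2\pi i\langle g^{-T}\xi,u_e\rangle}$ over the base walk is not cohomologically trivial. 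To compensate for the lack of a priori regularity of $\hat\mu(\cdot,\xi)$, I would first extract rigidity from the relation itself: taking absolute values and integrating against the base-walk-invariant measure $\mu_{\X'}$ forces equality in the triangle inequality, so that for $\mu_{\X'}$-a.e.\ $g$ one has $\hat\mu(A_e^{-1}g,\xi)/|\hat\mu(A_e^{-1}g,\xi)|=e^{-2\pi i\langle g^{-T}\xi,u_e\rangle}\,\hat\mu(g,\xi)/|\hat\mu(g,\xi)|$ for $\nu$-a.e.\ $e$; that is, $|\hat\mu(\cdot,\xi)|$ is $\nu$-harmonic and, where nonzero, $\arg\hat\mu(\cdot,\xi)$ is a measurable trivialisation of the above cocycle. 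One then argues — using the exponential growth of the iterated frequency together with the non-degeneracy above — that such a trivialisation cannot exist unless $\hat\mu(\cdot,\xi)\equiv0$.

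I expect the main obstacle to be the contracting direction, which is what places this problem outside the scope of all earlier work. The base walk $g\Gamma'\mapsto A_e^{-1}g\Gamma'$ is driven by the group generated by $\{\mathrm{diag}(\rho_eO_e,\rho_e^{-m/n}I_n):e\in E\}$, which is amenable (abelian-by-compact) and carries a strict drift, so it neither equidistributes to $\mu_{\X'}$ nor enjoys the recurrence and spectral-gap features that the expanding theory relies on; in particular $\X'$ supports many stationary measures for it. The cancellation must therefore be obtained by working along the orbits of this degenerate base walk directly — exploiting the growth of $\eta(g)$ there and the hypothesis $\pi_*\mu=\mu_{\X'}$ to control the distribution of the accumulated phase — rather than by importing quantitative equidistribution from the ambient group. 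Making this robust, and then passing from the vanishing of all non-trivial fibre Fourier coefficients back to $\mu=\mu_{\X}$, is where the genuinely new ideas will be needed.
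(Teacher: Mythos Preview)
Your approach via Fourier analysis along the torus fibres is genuinely different from the paper's, which follows a Benoist--Quint exponential drift strategy: conditional (leafwise) measures along the $\R^{m+n}$-action, a drift argument producing extra translation-invariance, decomposition of $\mu$ into translated homogeneous toral measures, and then a separate argument that any such decomposition with $\pi_*\mu=\mu_{\X'}$ forces $\mu=\mu_\X$. The paper never computes a Fourier coefficient. So what follows is not ``you diverged from the intended route'' but ``the route you chose has holes that are not yet bridged.''

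There is a concrete gap in your rigidity step. For a fixed $\xi\neq 0$, the function $g\mapsto|\hat\mu(g,\xi)|$ lives on $G'$, not on $\X'$: one has $\hat\mu(g\gamma,\xi)=\hat\mu(g,\gamma^{-T}\xi)$, and $\gamma^{-T}\xi\neq\xi$ in general, so $|\hat\mu(\cdot,\xi)|$ is not right $\Gamma'$-invariant. Hence ``integrating against the base-walk-invariant measure $\mu_{\X'}$'' is not well defined, and the equality-in-the-triangle-inequality argument that was supposed to deliver both the harmonicity of $|\hat\mu(\cdot,\xi)|$ and the cocycle trivialisation of $\arg\hat\mu(\cdot,\xi)$ does not go through as written. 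The natural home for $|\hat\mu|$ is the bundle of pairs $(L,v^*)$ with $L\in\X'$ and $v^*\in L^*\setminus\{0\}$, and this carries no finite invariant measure for the base walk; under iteration $v^*\mapsto A_e^T v^*$ the frequency simply escapes to infinity. This is exactly where the non-expanding, diagonal-by-compact nature of the base walk bites: you have neither unique stationarity nor recurrence on the base to close up the inequality. (One can sidestep the averaging by passing to the limit measures $\mu_b$, for which stationarity becomes the exact equation $\hat{\mu_b}(g,\xi)=e^{i\theta_{b_0}(g)}\hat{\mu_{Tb}}(A_{b_0}^{-1}g,\xi)$ with no integral; but you did not take that route, and it still leaves the main difficulty below.)

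Even granting some repaired rigidity, your last paragraph is a statement of intent rather than an argument: the assertion that the $S^1$-valued cocycle $(e,g)\mapsto e^{2\pi i\langle g^{-T}\xi,u_e\rangle}$ admits no measurable trivialisation on a set of positive measure is exactly the heart of the matter, and you give no mechanism for proving it. In the paper, the hypothesis $\pi_*\mu=\mu_{\X'}$ is used very differently: it implies $\pi_*\mu_b=\mu_{\X'}$ for a.e.\ $b$, so that on the \emph{smooth} base $\X'$ the Lebesgue density theorem is available; this is what produces the ``nearby generic point'' needed to run the drift despite the contracting direction. The endgame is also not the one you envisage: the paper does \emph{not} show the decomposition into homogeneous toral pieces is trivial (indeed, nontrivial such decompositions of $\mu_\X$ exist), but instead rules out any proper rational $V_\alpha\subsetneq\R^{m+n}$ by a recurrence/divergence argument on $G/G_\alpha$.
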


\begin{rem}
    The condition that $\nu$ is not virtually contained in $\SL_{m+n}(\R)$ is minimal, since there are measures on $\X$ which are $\SL_{m+n}(\R)$-invariant besides $\mu_{\X}$.
\end{rem}

\begin{rem}
    Theorem \ref{Main Random walk} also holds in more general cases when the elements $e^{-1}$ have $O_e' \in O(n)$ in place of $I_n$, where $O(n)$ denotes the orthogonal group w.r.t. some fixed norm on $\R^n$.
\end{rem}

\begin{rem}
    
Without the condition that $\pi_* \mu= \mu_{\X'}$, the classification of $\nu$-stationary measures implies the classification of all $a_t$-invariant measures on $\X'$ ($a_t$ is defined in (\ref{def:Diag}) below), to which we do not expect any interesting solution.
\end{rem}

\begin{rem}
    As stated earlier, the random walk given by $\nu$ differs from those in \cite{BQ1}, \cite{SimmonsWeiss} and \cite{EskLin1, EskLin2} in the fact that it is not expanding on average, nor does it satisfy any bounceback condition. This arises because the random walk has a contracting direction. To our knowledge, such a random walk on a homogeneous space has not been studied before. As a result, the exponential drift argument given here requires completely new and different ideas from the above papers.   
    In order to compensate for the existence of the contracting direction, the condition $\pi_* \mu= \mu_{\X'}$ is used in the Theorem.
\end{rem}

\subsection*{Random Genericity}
We now state the results that make up the second part of the article.

For $t \in \R$, define
\begin{equation}\label{def:Diag}
a_t= \begin{pmatrix}
    e^{t}I_m \\ & e^{-mt/n}I_n
\end{pmatrix}.
\end{equation}

\begin{defn}
\label{def: Birkhoff genericity}
    We will say that $x \in \X'$ (resp. $\X$) is Birkhoff generic if
    $$
    \lim_{T \rightarrow \infty} \frac{1}{T} \int_{[0,T]} \delta_{a_tx}  \, dt= \mu_{\X'} \text{ (resp $\mu_{\X})$}.
    $$
\end{defn}

We denote by $O(m)$ and $O(n)$ the orthogonal groups with respect to some fixed norm on $\R^m$ and $\R^n$ respectively. The second main theorem of this paper is  

\begin{thm}
\label{Random Genericity}
Suppose $\nu'$ is a measure on $G'$ supported on a compact set $E'$ such that for all $e \in E'$, we have
\begin{align}
    e^{-1}= \begin{pmatrix}
        \rho_e O_e \\ & \rho_{e}^{-m/n} O_e'
    \end{pmatrix},
\end{align}
where $\rho_e <1$, $O_e \in O(m)$ and $O_e' \in O(n)$.

Then a point $x \in \X'$ is Birkhoff generic if and only if for $(\nu')^{\otimes \N}$ almost every $(b_p)_{p \in \N} \in (G')^{ \N} $, we have that
\begin{align}
\label{eq:z2}
    \frac{1}{N} \sum_{p =1}^{N} \delta_{b_p \cdots b_1 x} \rightarrow \mu_{\X'},
\end{align}
as $N \rightarrow \infty$.
\end{thm}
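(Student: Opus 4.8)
The plan is to show each implication by relating the two averaging procedures through a common limiting object. The key structural fact is that the random walk given by $\nu'$ is built from elements whose diagonal part is $\operatorname{diag}(\rho_e I_m, \rho_e^{-m/n} I_n)$ times an orthogonal matrix, so that (after taking logs of the contracting scalars) the product $b_p \cdots b_1$ moves $x$ along the orbit of the diagonal flow $a_t$, up to bounded rotational factors in $O(m) \times O(n) \subset \SL_{m+n}(\R)$. Concretely, writing $\rho_p = \rho_{e_p}$ and $S_N = \sum_{p=1}^N (-\log \rho_p)$, one has $b_p \cdots b_1 = k_N a_{S_N} k_0$ for suitable $k_j$ lying in the compact group generated by the orthogonal parts, modulo the care needed because the $O_e$ need not commute; more precisely I would track the sequence as a cocycle over the Bernoulli shift and observe that $b_p\cdots b_1 \cdot x$ stays within a bounded neighborhood (in the $G'$-action) of $a_{S_N} x$. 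Since $-\log \rho_p$ is bounded and bounded away from $0$ on the compact support $E'$, the random sums $S_N$ grow linearly, $S_N / N \to \lambda := \int (-\log \rho_e)\, d\nu'(e)$ almost surely by the law of large numbers, and the gaps $S_{N} - S_{N-1}$ are uniformly bounded.

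For the direction ``Birkhoff generic $\Rightarrow$ random genericity'': assume $\frac{1}{T}\int_0^T \delta_{a_t x}\,dt \to \mu_{\X'}$. Fix a compactly supported continuous $f$ on $\X'$ and a bounded continuous test along the compact fiber of rotational factors. The empirical average $\frac{1}{N}\sum_{p=1}^N f(b_p\cdots b_1 x)$ should be compared with $\frac{1}{N}\sum_{p=1}^N f(a_{S_p} x)$; because the extra factors $k_p$ range over a fixed compact set and the flow direction is the same, uniform continuity of $f$ together with a standard argument using the mixing/rigidity of $a_t$ on $\X'$ lets one absorb the compact corrections (this is where one uses that conjugation by $O(m)\times O(n)$ preserves $\mu_{\X'}$ and acts continuously). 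Then $\frac{1}{N}\sum_{p=1}^N f(a_{S_p} x)$ is a Riemann-type sum for $\frac{1}{S_N}\int_0^{S_N} f(a_t x)\,dt$ with steps $S_p - S_{p-1}$ that are bounded; by summation by parts, using that $\frac{1}{T}\int_0^T f(a_tx)\,dt \to \int f\, d\mu_{\X'}$ and $S_N \to \infty$ with bounded increments, this converges to $\int f\,d\mu_{\X'}$ almost surely. Taking a countable dense family of $f$ gives \eqref{eq:z2}.

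For the converse ``random genericity $\Rightarrow$ Birkhoff generic'': suppose \eqref{eq:z2} holds for $(\nu')^{\otimes\N}$-a.e. sequence. The idea is to reverse the Riemann-sum comparison: for a.e. sequence, $\frac{1}{N}\sum_{p=1}^N \delta_{a_{S_p} x} \to \mu_{\X'}$ after removing compact rotational factors, and since the $S_p$ form a bounded-gap sequence with $S_N/N \to \lambda$, this forces $\frac{1}{T}\int_0^T \delta_{a_t x}\,dt$ to converge to $\mu_{\X'}$ as well, again by summation by parts in the other direction (a single random sequence with linearly-growing bounded-gap sample times already pins down the continuous-time ergodic average, because any subsequential weak-$*$ limit of $\frac{1}{T}\int_0^T \delta_{a_tx}dt$ would have to agree with the limit of the discrete averages). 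The one subtlety is passing from ``for a.e.\ sequence'' to a deterministic conclusion about $x$: here one uses that the event in \eqref{eq:z2} is shift-invariant in an appropriate sense and that if Birkhoff genericity failed, i.e. some subsequential limit $\mu' \neq \mu_{\X'}$ existed, one could choose with positive probability a sequence of sample times realizing $\mu'$, contradicting \eqref{eq:z2}; making this rigorous needs the bounded-gap property so that the discrete orbit ``sees'' every portion of the continuous orbit.

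I expect the main obstacle to be the bookkeeping of the non-commuting orthogonal factors $k_p$ and showing that they can genuinely be absorbed: one needs that the test functions, after averaging, do not feel the difference between $b_p \cdots b_1 x$ and $a_{S_p} x$. The cleanest route is probably to work on the product space $\X' \times (O(m)\times O(n))$ (or a compact group containing all $O_e, O_e'$), push the random walk up to a skew product whose $\X'$-marginal dynamics is exactly $a_{S_p}$ twisted by a rotation cocycle, and invoke unique ergodicity of $a_t$ relative to this compact extension — that is, the fact that conjugation-invariance of $\mu_{\X'}$ under $O(m)\times O(n)$ upgrades the continuous-time equidistribution to equidistribution in the extension. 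Everything else (law of large numbers for $S_N$, summation by parts, countable dense families of test functions) is routine.
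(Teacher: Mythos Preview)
There is a genuine gap in your forward direction. The assertion that $\frac{1}{N}\sum_{p=1}^N f(a_{S_p}x)$ is ``a Riemann-type sum'' for $\frac{1}{S_N}\int_0^{S_N} f(a_t x)\,dt$ and converges to $\int f\,d\mu_{\X'}$ by summation by parts is not correct: a Riemann sum would weight the $p$-th term by $(S_p-S_{p-1})/S_N$, not by $1/N$, and these differ by the random factor $t_p/\lambda$, which is bounded away from $1$. More to the point, Birkhoff genericity controls only the \emph{integral} of $t\mapsto f(a_t x)$, not its values at the sample times $S_p$; for a general bounded-gap sequence the sampled average need not converge, so any valid argument must exploit the randomness of the gaps in a serious way. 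The paper does this through three ingredients that you have not invoked: Breiman's law of large numbers shows every weak limit $\mu$ of the empirical measures is $\nu'$-stationary; Proposition~\ref{prop: Sparse Equidistribution} (the only place the Birkhoff hypothesis on $x$ actually enters) shows that such a $\mu$ is a probability measure with $\int_0^c (a_{-l})_*\mu\,dl \ll \mu_{\X'}$; and $\nu'$-ergodicity of $\mu_{\X'}$ (via Howe--Moore) then forces $\mu=\mu_{\X'}$. No summation by parts is involved.

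Your treatment of the rotations and of the converse also needs more than you have sketched. Uniform continuity of $f$ cannot absorb the $k_p$ since these are not close to the identity, and $a_t$ is far from uniquely ergodic on $\X'$, so there is no ``unique ergodicity relative to the compact extension'' to invoke. Your skew-product instinct is, however, correct and matches the paper, but what is actually used is \emph{joint} equidistribution: for the forward direction one lifts to $\X'\times K$ and applies the Simmons--Weiss stationary-measure classification (Proposition~\ref{SimmonsWeiss Prop 5.3}) to the weak limits on the product; for the converse one needs equidistribution in $\X'\times K\times\R$, with the extra $\R$-factor recording the current gap $-\log\rho_{b_p}$ (Theorem~\ref{SimmonsWeiss Thm2.2}). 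Only with that joint information can one carry out the computation that converts the random Ces\`aro average into the continuous Birkhoff integral; reversing a summation-by-parts along a single random trajectory does not recover the variable-gap weights, for the same reason as above.
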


\begin{rem}
    It is well known that $x \in \X'$ is Birkhoff generic if and only if for any $q>0$, the orbit $\{(a_{q})^p x\}_{p \in \N}$ equidistributes in $\X'$ w.r.t. $\mu_{\X'}$. This implies that the equidistribution of orbit under flow $\{a_t\}_{t \in \R_{\geq 0}}$ is equivalent to the equidistribution of deterministic walk $y \mapsto a_q y$. Theorem \ref{Random Genericity} extends this equivalence to the equivalence of the equidistribution of the orbit under the flow $\{a_t\}_{t \in \R}$ and the equidistribution of the random walk, given by $y \mapsto a_{\log \rho_e} y$ with probability given by $\nu$ on $E$. A statement analogous to the `if' direction of the above theorem, was proved in \cite{SimmonsWeiss}.

   One particular example is when the next step of random walk from $y$ is given by flipping an unbiased coin and then moving to $a_{2}y$ if Head appears and otherwise to $a_{3}y$ if Tails appears. Theorem \ref{Random Genericity} tells us that this random walk starting from $x$ equidistributes w.r.t. $\mu_{\X'}$ (random generic) if and only if $x$ is Birkhoff generic. 
\end{rem}
    The theorem although stated for particular case of the system $(\X', \mu_{\X'}, a_t)$, holds in much more general setting. In particular, the proof of Theorem \ref{Random Genericity} easily yields the following result.

    \begin{thm}
    Let $X$ be a locally compact, second countable, homogeneous space, $H$ a locally compact second countable group acting continuously on $X$ and $m_X$ be an $H$-invariant probability measure on $X$ such that the action of $H$ on $X$ is mixing. Let $\{a_t: t \in \R\}$ be a one parameter subgroup of $H$ such that $a_t \rightarrow \infty$ as $t \rightarrow \infty$ (i.e., leaves all compact subsets of $H$). Let $K$ be a compact subgroup of $H$ commuting with $\{a_t: t \in \R\}$ and let $\nu$ be a measure on $H$ supported on elements of the form $\{a_tk : t \in [c,d], k \in K\}$ for some $[c,d] \subset (0, \infty)$. Then, for any $x \in X$, we have that
     $$
    \lim_{T \rightarrow \infty} \frac{1}{T} \int_{[0,T]} \delta_{a_tx}  \, dt= m_{X},
    $$
    holds if and only if for $\nu^{\otimes \N}$ almost every $(b_p)_{p \in \N} \in H^{ \N} $, we have that
\begin{align}
    \frac{1}{N} \sum_{p =1}^{N} \delta_{b_p \cdots b_1 x} \rightarrow \mu_{\X'},
\end{align}
as $N \rightarrow \infty$. 
\end{thm}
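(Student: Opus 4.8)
\emph{Overview.} The plan is to prove the two implications separately; the mixing hypothesis is used only in ``Birkhoff $\Rightarrow$ random'', and there only in an arithmetic case. Identify a $\nu^{\otimes\N}$-random sequence $(b_p)_{p\in\N}$ with i.i.d.\ pairs $(t_p,k_p)$ of law $\bar\nu$, the image of $\nu$ under $a_tk\mapsto(t,k)$, so $b_p=a_{t_p}k_p$; since $K$ centralizes $\{a_t\}$ we get $b_p\cdots b_1=a_{S_p}\kappa_p$ with $S_p=t_1+\cdots+t_p$ and $\kappa_p=k_p\cdots k_1$, hence $z_p:=b_p\cdots b_1x=\kappa_p\,a_{S_p}x$; put $\bar t=\int t\,d\bar\nu\in[c,d]$, so $S_p/p\to\bar t$ a.s. The compact twist $\kappa_p\in K$ (commuting with the flow) is a bookkeeping nuisance, handled by passing to $K$-averages of test functions together with a uniform version of equidistribution over $\{a_rkx:k\in K\}$; I suppress it below and write $z_p=a_{S_p}x$. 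Everything rests on the renewal identity obtained by tiling $[0,S_N)$ by the epochs $[S_p,S_{p+1})$: for $f\in C_c(X)$,
\[
\int_0^{S_N}f(a_tx)\,dt=\sum_{p=0}^{N-1}\int_0^{t_{p+1}}f(a_rz_p)\,dr=\int_0^d\Big(\sum_{p=0}^{N-1}\mathbf 1_{\{r<t_{p+1}\}}f(a_rz_p)\Big)dr .
\]

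\emph{``If'': random genericity implies Birkhoff genericity.} Fix a sequence with $\tfrac1N\sum_{p\le N}\delta_{z_p}\to m_X$. Since $(a_r)_*m_X=m_X$ and $r\mapsto f\circ a_r$ is norm-continuous on $C_c(X)$, an equicontinuity argument upgrades this to $\tfrac1N\sum_{p\le N}f(a_rz_p)\to m_X(f)$ uniformly for $r\in[0,d]$. In the identity, for fixed $r$ the weights $\mathbf 1_{\{r<t_{p+1}\}}$ are independent of $z_p$ with mean $\bar F(r):=\Prob(t_1>r)$, so $\sum_{p<N}(\mathbf 1_{\{r<t_{p+1}\}}-\bar F(r))f(a_rz_p)$ is a martingale with uniformly bounded increments, hence $o(N)$ a.s., and uniformly in $r$ once the (at most countably many) atoms of $\bar\nu$ are split off. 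Together with $\int_0^d\bar F\,dr=\bar t$ and $S_N\sim N\bar t$ this gives $\tfrac1{S_N}\int_0^{S_N}f(a_tx)\,dt\to m_X(f)$; as $S_N\to\infty$ with gaps $\le d$ the same holds with $S_N$ replaced by any $T\to\infty$, so $x$ is Birkhoff generic. Mixing is not used here.

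\emph{``Only if'': Birkhoff genericity implies a.s.\ random genericity.} Take conditional expectations given $\mathcal F_p=\sigma(t_1,\dots,t_p)$ in $\int_0^{t_{p+1}}f(a_rz_p)\,dr$ (note $t_{p+1}\perp\mathcal F_p$ while $z_p$ is $\mathcal F_p$-measurable): the increments $\int_0^{t_{p+1}}f(a_rz_p)\,dr-\Psi_f(z_p)$, with $\Psi_f:=\int_0^d\bar F(r)\,(f\circ a_r)\,dr\in C_c(X)$, are bounded martingale differences, so the identity becomes $\tfrac1{S_N}\int_0^{S_N}f(a_tx)\,dt=\tfrac1{S_N}\sum_{p<N}\Psi_f(z_p)+o(1)$ a.s. Since $x$ is Birkhoff generic the left side tends to $m_X(f)$, and $m_X(\Psi_f)=\bar t\,m_X(f)$; running this for $f\circ a_u$, $u\ge0$, as well, we conclude that a.s.\ $\tfrac1N\sum_{p\le N}g(z_p)\to m_X(g)$ for every $g$ of the form $\Psi_{f\circ a_u}$. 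A second martingale argument (comparing $\tfrac1N\sum_p g(z_{p+1})$ with $\tfrac1N\sum_p\big(\int g(by)\,d\nu(b)\big)\big|_{y=z_p}$), together with a non-divergence input supplied by the Birkhoff genericity of $x$, shows that a.s.\ every weak-$*$ limit $\lambda$ of $\tfrac1N\sum_{p\le N}\delta_{z_p}$ is a $\nu$-stationary probability measure. For such $\lambda$, writing $\nu_t$ for the $[c,d]$-marginal of $\bar\nu$, stationarity says $r\mapsto\lambda(f\circ a_r)$ is a bounded $\nu_t$-harmonic function on $\R$, so by Choquet--Deny it is invariant under translation by the closed subgroup $\overline{\langle\operatorname{supp}\nu_t\rangle}$. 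If $\nu_t$ is non-lattice this group is all of $\R$, so $\lambda$ is $\{a_t\}$-invariant; then $\lambda(\Psi_f)=\int_0^d\bar F(r)\lambda(f\circ a_r)\,dr=\bar t\,\lambda(f)$, while the convergence above forces $\lambda(\Psi_f)=m_X(\Psi_f)=\bar t\,m_X(f)$, hence $\lambda=m_X$; as every weak-$*$ limit equals $m_X$, we are done. If $\nu_t$ is lattice with span $\theta$, then $S_p\in\theta\Z$ and the walk stays inside the single $a_\theta$-orbit of $x$; here one first treats the case $\nu=\delta_{a_\theta}$, namely that Birkhoff genericity of $x$ implies equidistribution of $\{a_\theta^jx\}_j$, by a standard argument using mixing of the flow (which, since $a_\theta\to\infty$, makes $a_\theta$ mixing, in particular with no eigenvalue at $2\pi/\theta$), and then transfers this to $z_p=a_\theta^{S_p/\theta}x$ by the discrete analogue of the renewal/martingale scheme of the ``if'' direction, the relevant Tauberian symbol $\big(1-\EE[z^{t_1/\theta}]\big)/(1-z)$ being zero-free on $\overline{\mathbb D}$ exactly because $\theta$ is the true span.

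\emph{Main obstacle.} The crux is the passage from equidistribution for the smoothed functions $\Psi_f$ (which the renewal/martingale machinery delivers from Birkhoff genericity essentially for free) to equidistribution for all of $C_0(X)$: the Choquet--Deny step in the non-lattice case, and the single-transformation statement for $a_\theta$ in the lattice case. It is in this latter case that mixing is indispensable --- without it the theorem is false, e.g.\ for the unit-speed rotation flow on $\R/\Z$ with $\nu=\tfrac12\delta_{a_2}+\tfrac12\delta_{a_3}$, where every point is Birkhoff generic but $S_p\in\Z$ forces $z_p\equiv x$, so no point is random generic. The remaining ingredients --- the tiling identity, the martingale law of large numbers, the non-divergence, and the compact-twist bookkeeping --- are routine by comparison.
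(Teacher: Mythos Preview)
Your renewal/martingale framework is a genuinely different route from the paper's. The paper (which simply refers back to its proof of the specific Theorem on $\X'$) handles the forward direction by combining Breiman's law of large numbers with an absolute-continuity estimate (its Proposition on sparse equidistribution: any weak limit $\mu$ satisfies $\int_0^c (a_{-l})_*\mu\,dl\ll m_X\ll\int_0^d(a_{-l})_*\mu\,dl$) and then invokes $\nu$-ergodicity of $m_X$; your Choquet--Deny step instead pins down the limit as $\{a_t\}$-invariant directly and identifies it via the $\Psi_f$ relation, which in the non-lattice, $K$-trivial case avoids any appeal to ergodicity or mixing --- a nice sharpening. For the converse direction the paper imports a heavy black box (the Simmons--Weiss joint-equidistribution theorem on $X\times K\times Y$), whereas your direct renewal/martingale argument stays elementary. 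In the $K$-trivial case your sketch is essentially correct, and your counterexample on $\R/\Z$ cleanly isolates the lattice obstruction.

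The genuine gap is the compact twist $K$, which is not mere bookkeeping. With $K$ present the identity reads $\int_0^{S_N} f(a_tx)\,dt=\sum_p\int_0^{t_{p+1}} f(\kappa_p^{-1}a_r z_p)\,dr$, with the $p$-dependent rotation $\kappa_p^{-1}$ sitting inside the sum. Passing to $K$-averaged test functions only recovers equidistribution on $K\backslash X$, not on $X$; and uniform Birkhoff convergence over the compact family $\{kx:k\in K\}$ still does not control sums of the form $\sum_p (f\circ\kappa_p)(a_{S_p}x)$ where the rotation varies with $p$. In the paper this is precisely where the Simmons--Weiss machinery enters: a stationary-measure classification on the skew product $X\times K$ in one direction, and joint equidistribution of $(z_p,\kappa_p,\dots)$ in the other. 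Both require ergodicity of the $\langle\supp\nu\rangle$-action on $(X\times K,\,m_X\otimes m_K)$, which is deduced from weak mixing of the flow on $X$ together with ergodicity of the $K$-rotation --- so this is a \emph{second} place, besides the lattice case, where the mixing hypothesis is genuinely used. Your localization of mixing to the lattice case is therefore correct only when $K$ is trivial; to complete the argument for general $K$ you will need either an analogue of the Simmons--Weiss input or a separate argument establishing joint equidistribution of $(\kappa_p,z_p)$ in $K\times X$.
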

One particularly interesting example is when $a_t$ is the unipotent flow on $\X'$.

We now explain how Theorem \ref{Random Genericity} partially answers a question posed by Solan and Wieser in \cite[Prob. 1.5]{solanwieser}.  

\begin{prob}[{\cite[Prob.~1.5]{solanwieser}}]
\label{prob: solanwieser Prob}
    Let $\{t_p\}$ be a sequence of integers. Determine sufficient conditions on a curve $\psi: [0,1] \rightarrow \Mat$ and the sequence $\{t_p\}$ so that for all $x \in \X'$ and for almost every $s \in [0,1]$, we have 
    \begin{align}
    \label{eq: solanwieser Prob}
        \frac{1}{N} \sum_{p=1}^N \delta_{a_{t_p} u(\phi(s))x} \rightarrow \mu_{\X'},
    \end{align}
    where for any $A \in \Mat$, $u(A)= \begin{pmatrix}
        I_m & A \\ & I_n
    \end{pmatrix}$.
\end{prob}

Suppose that $\psi: [0,1] \rightarrow \Mat$ satisfies that for all $x \in \X'$ and for a.e. $s \in [0,1]$, we have $u(\psi(s))x$ is Birkhoff generic. This happens when $\psi$ satisfies some {\em non-degeneracy} conditions mentioned in Theorem \ref{thm: genericity of Type 2 measures}. Theorem \ref{Random Genericity} tells us that as long as the elements $\{t_p- t_{p-1}\}_{p \geq 2}$ are independent identically random variables in some bounded interval $[c,d] \subset (0,\infty)$, the \eqref{eq: solanwieser Prob} holds.

\begin{rem}
    See Prop. \ref{prop: Sparse Equidistribution} for a statement about the general sequence $(t_p)_p$, for which elements $\{t_p- t_{p-1}\}_{p \geq 2}$ may not be independent identically random variables but are bounded in $(0, \infty)$.
\end{rem}

Combining Theorems \ref{Main Random walk} and \ref{Random Genericity}, we get the following application to homogeneous dynamics:

\begin{thm}
    \label{Intermediate Theorem}
    Suppose $x \in \X$ is such that $\pi(x)$ is Birkhoff generic. Let $\alpha$ be a measure on $\R^m$ of {\em Type 1} (defined in Section \ref{subsec:Type 1 measures}). Then, for $\alpha$-a.e. $v \in \R^m$, we have that $\left[I_{m+n},\begin{pmatrix}
        v \\ 0
    \end{pmatrix}\right]x$  is Birkhoff generic.
\end{thm}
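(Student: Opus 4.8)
The plan is to deduce Birkhoff genericity of $y_v:=\left[I_{m+n},\begin{pmatrix}v\\0\end{pmatrix}\right]x$ from equidistribution of the $\nu$-random walk started at $y_v$, in two steps: (1) show $y_v$ is \emph{$\nu$-random generic}, i.e.\ $\frac1N\sum_{p=1}^N\delta_{e_p\cdots e_1 y_v}\to\mu_{\X}$ for $\nu^{\otimes\N}$-a.e.\ $(e_p)_p$; (2) upgrade random genericity to Birkhoff genericity. The observation that makes the hypothesis usable is that $\pi(y_v)=\pi(x)$ for \emph{every} $v$, since a translation in the $\R^m\oplus\{0\}$ direction leaves the underlying linear lattice unchanged; thus $\pi(y_v)$ is Birkhoff generic for all $v$.

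For step (1), fix $v$, let $(e_p)\sim\nu^{\otimes\N}$, $Y_p=e_p\cdots e_1 y_v$, $\mu_N^{(v)}=\frac1N\sum_{p\le N}\delta_{Y_p}$. Apply Theorem~\ref{Random Genericity} to $\nu':=(\pi_G)_*\nu$, which has the required shape with $O_e'=I_n$: since $\pi(x)$ is Birkhoff generic, a.s.\ the projected walk $\pi(Y_p)=(\pi_G e_p)\cdots(\pi_G e_1)\pi(x)$ equidistributes to $\mu_{\X'}$. As $\pi\colon\X\to\X'$ is proper with compact torus fibres, this forces no escape of mass for $(Y_p)$: for every $\e>0$ there is a compact $K_\e\subset\X'$ with $\limsup_N\frac1N\#\{p\le N:Y_p\notin\pi^{-1}(K_\e)\}\le\e$ a.s., so $(\mu_N^{(v)})$ is a.s.\ tight. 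A standard martingale argument on this essentially compact state space then shows that a.s.\ every weak-$*$ limit $\mu_\infty$ of $(\mu_N^{(v)})$ is a $\nu$-stationary probability measure with $\pi_*\mu_\infty=\mu_{\X'}$; by Theorem~\ref{Main Random walk}, $\mu_\infty=\mu_{\X}$. Hence $\mu_N^{(v)}\to\mu_{\X}$ a.s., for every $v$.

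For step (2) the shift measure $\alpha$ enters. Unwinding the group law in $G$ yields the identity
\[
e_p\cdots e_1\, y_v \;=\; a_{T_p}\,\kappa_p\, y_{v+S_p},
\]
with $T_p=-\sum_{i\le p}\log\rho_{e_i}\to\infty$ a sum of i.i.d.\ variables valued in a compact subinterval of $(0,\infty)$, $\kappa_p\in O(m)\times\{I_n\}\subset\SL_{m+n}(\R)$ commuting with $\{a_t\}$ (and $\mu_{\X}$ is $\kappa_p$-invariant), and $S_p=\sum_{i\le p}(\rho_{e_1}\cdots\rho_{e_{i-1}})(O_{e_1}\cdots O_{e_{i-1}})w_{e_i}\in\R^m$ converging geometrically a.s.\ to some $S_\infty$. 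So the random walk is, up to a bounded compact-group factor $\kappa_p$ and a convergent shift $S_p$, a random sparse sample of the flow orbit $\{a_t y_v\}_t$ at the times $T_p$. Using that the $a_t$-action on the affine lattice space $\X$ is mixing — Howe--Moore on $\X'$ together with the fact that $a_t$ sends every nonzero Fourier mode on the torus fibres to infinity — one runs the scheme behind Theorem~\ref{Random Genericity} (in the form of the general equidistribution criterion stated immediately after it) inside $\X$ to interpolate step~(1)'s Cesàro convergence along $\{T_p\}$ to continuous time, obtaining $\frac1T\int_0^T\delta_{a_t y_v}\,dt\to\mu_{\X}$ for $\alpha$-a.e.\ $v$. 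The Type~1 hypothesis on $\alpha$ is used precisely to restrict $v$ to an $\alpha$-conull set on which the translation directions $S_p$ do not obstruct this transfer.

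The step I expect to be the main obstacle is (2): because of the translation parts $w_e$, the measure $\nu$ is not supported on elements of the form $a_t k$ with $k$ in a compact subgroup, so the general criterion does not apply verbatim; one must show that the shift contributions $S_p$ — which, even after rescaling by $a_{T_p}$, stay bounded rather than vanishing — do not destroy equidistribution, and must pin down how the Type~1 structure of $\alpha$ produces the $\alpha$-null exceptional set of $v$. Step~(1), by contrast, is comparatively soft once Theorems~\ref{Main Random walk} and~\ref{Random Genericity} are available.
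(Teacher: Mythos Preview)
Your Step~(1) is correct and matches the paper (which only runs it for the single point $x$; your extra generality over all $y_v$ is harmless but unnecessary). Step~(2), however, has a genuine gap, precisely where you flag it. Your identity $e_p\cdots e_1 y_v = a_{T_p}\kappa_p\, y_{v+S_p}$ is right, but it does \emph{not} exhibit the walk from $y_v$ as a sparse sample of the flow orbit of $y_v$: it samples the orbit of $y_{v+S_p}$, and $S_p\to S_\infty=\eta((e_p))\ne 0$. Conjugating the residual $S_\infty-S_p$ by $a_{T_p}\kappa_p$ gives exactly $\eta(T^p e)$, which is bounded but has no limit. So for fixed $v$ there is no passage from random genericity of $y_v$ to Birkhoff genericity of $y_v$ along this route, and the general criterion you invoke (the theorem after Theorem~\ref{Random Genericity}, which needs $\nu$ supported on $\{a_tk:k\in K\}$) genuinely does not apply. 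Your closing suggestion that the Type~1 hypothesis singles out an $\alpha$-conull set of $v$ on which ``the $S_p$ do not obstruct'' is not a mechanism: $S_p$ depends only on the walk, not on $v$.

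The paper's resolution is to \emph{couple} $v$ with the walk rather than fix it. The definition of Type~1 is $\alpha=\eta_*\nu^{\otimes\N}$, so one writes $v=\eta(b)$ and runs the walk from $x$; your identity with $v=0$ becomes
\[
a_{t_j(b)}\,y_{\eta(b)}\;=\;\tilde O_{b_1}\cdots\tilde O_{b_j}\,[I_{m+n},(\eta(T^jb),0)]\;b_j\cdots b_1 x.
\]
The non-vanishing correction $\eta(T^jb)$ is then handled not by making it small but by \emph{joint equidistribution}: \cite[Thm.~2.2]{SimmonsWeiss} (quoted in the paper as Theorem~\ref{SimmonsWeiss Thm2.2}) upgrades the random genericity of $x$ from Step~(1) to equidistribution of the triple $\bigl(b_j\cdots b_1x,\ \kappa(b_j\cdots b_1),\ f(T^jb)\bigr)$ with respect to $\mu_\X\otimes m_K\otimes f_*\nu^{\otimes\Z}$, where $f$ records $(-\log\rho_{b_0},\eta)$. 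Integrating $\phi(a_l\,\cdot\,)$ over $l\in[\log\rho_{b_j},0]$ and using invariance of $\mu_\X$ under both $K$ and $\R^{m+n}$-translations, the correction terms average out and one obtains $\frac1T\int_0^T\delta_{a_t y_{\eta(b)}}\,dt\to\mu_\X$ for $\nu^{\otimes\Z}$-a.e.\ $b$, hence for $\alpha$-a.e.\ $v$. This coupling-plus-joint-equidistribution step is the idea your outline is missing.
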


\begin{rem}
    The exact definition of {\em Type 1} measures will appear in Section \ref{subsec:Type 1 measures}. Some interesting examples of {\em Type 1} measures include: normalised Lebesgue measure on a line segment (e.g. $[0,1] \times \{0\}^{m-1}$ or $\{(t,\ldots, t): t \in [0,1]\}$ etc.) or $s$-dimensional Hausdorff measure on the limit set of a finite system of compacting similarity maps satisfying an open set condition, with $s$ equalling the Hausdorff dimension of the fractal. Note that the fractals are allowed to be contained in an affine subspace of $\R^m$. 
\end{rem}

\subsection*{Diophantine approximation}

We now explain applications of our dynamical results to Diophantine approximation. Let $\Mat$ denote the space of $m \times n$ matrices with real entries. Following the notation of \cite{EinTse}, we denote by $\Bad$ the set of badly approximable systems of affine forms, namely the set of $(A, b) \in \Mat \times \R^{m}$ for which there exists $c(A, b) > 0$ such that

$$ \|Aq + b + p \| \geq \frac{c(A, b)}{\|q\|^{n/m}} $$

for all $p \in \Z^m$ and non-zero $q \in \Z^n$. Further, for fixed $b$, we set 
$$\Badb := \{A \in \Mat~:~(A, b) \in \Bad\},$$
and for fixed $A$, we set 
$$\BadA := \{b \in \R^m~:~(A, b) \in \Bad\}.$$ 

In \cite{Kleinbock}, it is shown that $\Bad$ has full Hausdorff dimension. In the case where $b = 0$, namely the homogeneous case,  this was further generalized in \cite{KleinbockWeiss} to the statement that $K \cap Bad$ is winning (for a suitable variant of the Schmidt game) for sets that appear as supports of a wide class of measures, called friendly measures, which were previously introduced in \cite{KLW}. In \cite{EinTse}, Einsiedler and Tseng, answering a question posed by Kleinbock, studied the more difficult case when one of the parameters $A$ or $b$ is fixed. That is, it was proved that $K \cap \Badb $ and $K \cap \BadA$ are both winning for a wide class of $K$. We refer the reader to \cite{EinTse} for the precise statements. The results of \cite{EinTse} leave open the natural question of the measure of these sets with respect to the natural measure supported on a fractal. There are three natural questions one can ask for a suitably wide class of fractals $K$:

\begin{enumerate}
\item Does $\Bad \cap K $ have zero measure?\\
\item Does $\Badb \cap K$ have zero measure?\\
\item Does $\BadA \cap K$ have zero measure?
\end{enumerate}

We address these questions in this paper. Somewhat paradoxically, it turns out that the measure theoretic question is often more nuanced than the Hausdorff dimension question. It is relatively easy to construct fractals that entirely consist of badly approximable numbers.\\

We now turn to the opposite end of the Diophantine spectrum in a certain sense.

\begin{defn}
\label{def:Dirichlet}
    An affine form $(A,b)$ is said to be Dirichlet improvable if there exists a $ \lambda(A,b) >0$ such that for all sufficiently large $Q \geq 1$, there exists $q \in \Z^n \setminus \{0\}$ and $p \in \Z^m$ such that 
    $$\|q\| \leq Q \text{ and } \|A q + p + b\| \leq \lambda(A,b)Q^{-n/m}.$$
    We define $\Dirichlet$ as the set of all Dirichlet improvable linear forms. Furthermore, for a fixed $b$, we set 
    $$
    \Dirichletb:= \{A: (A,b) \in \Dirichlet\},
    $$
    and for a fixed $A$, we set
    $$
    \DirichletA:= \{b : (A,b) \in \Dirichlet\}.
    $$
\end{defn}
These sets have been extensively studied in recent years, we refer the reader in particular to \cite{KleinbockWadleigh}.
\begin{defn}
    \label{def: Generic type}
    An affine form $(A,b)$ is said to be of generic type if the element
    $$
    \left[\begin{pmatrix}
        I_m & A \\ &I_n
    \end{pmatrix}, \begin{pmatrix}
        b \\ 0
    \end{pmatrix} \right] \Gamma
    $$
    is Birkhoff generic. We define $\Gen \subset \M_{m \times n}(\R) \times \R^m$ as the set of all affine forms that are of generic type. Furthermore, for a fixed $b$, we set 
    $$
    \Genb:= \{A: (A,b) \in \Gen\},
    $$
    and for a fixed $A$, we set
    $$
    \GenA:= \{b : (A,b) \in \Gen\}.
    $$
\end{defn}

Using Theorem \ref{Intermediate Theorem}, we get the following result:
\begin{thm}
\label{thm:Single Inhomogeneous case 1}
    There exists a subset $\mathcal{M} \subset \M_{m \times n}(\R)$, which is large in the sense that it gets full measure w.r.t. each measure of {\em Type 2} (defined in Section \ref{subsec: Type 2 measures}) and {\em Type 3} (defined in Section \ref{subsec: Type 3 measures}), such that for every $A \in \mathcal{M}$ the following holds: for every measure $\alpha$ of {\em Type 1} (defined in Section \ref{subsec:Type 1 measures}), we have:
    $$\alpha(\BadA)= \alpha(\DirichletA)= 0 \text{ and } \alpha(\GenA)=1.$$
    In fact, the set $\mathcal{M}$ is maximal, in the sense that if $A \notin \mathcal{M}$, then $\GenA = \emptyset$.
\end{thm}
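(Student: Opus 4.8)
The plan is to set
\[
\mathcal{M} := \{\, A \in \M_{m \times n}(\R) \ :\ u(A)\Gamma' \in \X' \text{ is Birkhoff generic} \,\}
\]
and to read off every assertion of the theorem from Theorems~\ref{Intermediate Theorem} and \ref{thm: genericity of Type 2 measures} together with the (inhomogeneous) Dani correspondence. First I would record the elementary compatibility statement for the torus bundle projection $\pi:\X\to\X'$: since $\pi$ is continuous, $a_t$-equivariant (because $\pi_G(a_t)=a_t$), proper (its fibres are the compact torus $\R^{m+n}/\Z^{m+n}$) and satisfies $\pi_*\mu_{\X}=\mu_{\X'}$, pushing the time averages $\frac{1}{T}\int_{[0,T]}\delta_{a_t x}\,dt$ forward by $\pi$ shows that if $x\in\X$ is Birkhoff generic then so is $\pi(x)\in\X'$. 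Applying this to
\[
\Lambda_{A,b}:=\left[\begin{pmatrix} I_m & A\\ & I_n\end{pmatrix},\begin{pmatrix} b\\ 0\end{pmatrix}\right]\Gamma ,
\]
whose image under $\pi$ is exactly $u(A)\Gamma'$, gives the maximality clause at once: if $A\notin\mathcal{M}$ then no $b$ can make $\Lambda_{A,b}$ Birkhoff generic, i.e.\ $\GenA=\emptyset$.

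Next, fix $A\in\mathcal{M}$ and put $x_0:=\left[\begin{pmatrix} I_m & A\\ & I_n\end{pmatrix},0\right]\Gamma\in\X$, so that $\pi(x_0)=u(A)\Gamma'$ is Birkhoff generic by the definition of $\mathcal{M}$. The semidirect product law $(g_1,w_1)(g_2,w_2)=(g_1g_2,\,w_1+g_1w_2)$ gives
\[
\left[I_{m+n},\begin{pmatrix} v\\ 0\end{pmatrix}\right]x_0=\left[\begin{pmatrix} I_m & A\\ & I_n\end{pmatrix},\begin{pmatrix} v\\ 0\end{pmatrix}\right]\Gamma=\Lambda_{A,v},
\]
so Theorem~\ref{Intermediate Theorem} applies verbatim with $x=x_0$: for every measure $\alpha$ of Type~1 and for $\alpha$-a.e.\ $v\in\R^m$ the point $\Lambda_{A,v}$ is Birkhoff generic, that is, $v\in\GenA$. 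Hence $\alpha(\GenA)=1$ for every $A\in\mathcal{M}$ and every measure $\alpha$ of Type~1.

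For the remaining two conclusions I would use the affine Dani correspondence to see that a Birkhoff generic affine form is neither badly approximable nor Dirichlet improvable, so that $\GenA\cap(\BadA\cup\DirichletA)=\emptyset$. Concretely, $(A,b)\in\Bad$ with constant $c$ is equivalent to the forward orbit $\{a_t\Lambda_{A,b}:t\ge 0\}$ avoiding the nonempty open set of affine lattices having a point $(y,z)$ with $z\neq 0$, $\|z\|<1$ and $\|y\|<c$; and $(A,b)\in\Dirichlet$ with constant $\lambda$ is equivalent to this orbit being eventually contained in the proper subset $\mathcal{D}_\lambda\subsetneq\X$ of affine lattices having a point $(y,z)$ with $z\neq 0$, $\|z\|\le 1$ and $\|y\|\le\lambda$, the complement of which contains a nonempty open set. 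Since a Birkhoff generic orbit equidistributes toward $\mu_{\X}$ it meets every nonempty open set with positive frequency, which contradicts both ``avoiding a fixed nonempty open set for all $t\ge 0$'' and ``being eventually contained in $\mathcal{D}_\lambda$''. Combining $\GenA\cap(\BadA\cup\DirichletA)=\emptyset$ with $\alpha(\GenA)=1$ yields $\alpha(\BadA)=\alpha(\DirichletA)=0$ for every $A\in\mathcal{M}$ and every measure $\alpha$ of Type~1. I expect this translation step --- pinning down the precise excursion and trapping sets in the affine Dani correspondence, and verifying that equidistribution genuinely rules out the corresponding behaviour --- to be the only mildly delicate point in this particular argument.

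Finally, the ``largeness'' of $\mathcal{M}$ is precisely the statement that every measure of Type~2 and every measure of Type~3 on $\M_{m\times n}(\R)$ assigns full mass to $\{A:u(A)\Gamma' \text{ is Birkhoff generic}\}$; this is contained in Theorem~\ref{thm: genericity of Type 2 measures} (which, under the relevant non-degeneracy hypotheses, gives that $u(A)x$ is Birkhoff generic for every $x\in\X'$, for almost every $A$) and in its analogue for measures of Type~3, so I would simply quote these. Of course the genuine mathematical weight of the whole chain lies inside Theorems~\ref{Main Random walk} and \ref{Random Genericity} --- in particular the classification of stationary measures for the non-expanding random walk --- which feed into Theorem~\ref{Intermediate Theorem}; relative to those, the present theorem is essentially bookkeeping.
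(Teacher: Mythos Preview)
Your proposal is correct and follows essentially the same route as the paper: the paper defines $\mathcal{M}=\{A:u(A)\Gamma'\text{ is Birkhoff generic}\}$, invokes Theorems~\ref{thm: genericity of Type 2 measures} and \ref{thm: genericity of Type 3 measures} for largeness, obtains maximality by pushing equidistribution forward along $\pi$, applies Theorem~\ref{Intermediate Theorem} to get $\alpha(\GenA)=1$, and then cites the Dani correspondence for $\alpha(\BadA)=\alpha(\DirichletA)=0$. Your write-up is more explicit on the Dani step and on the $\pi$-equivariance, but there is no genuine difference in strategy.
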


\begin{rem}
    The exact definition of {\em Type 2} measures will appear in Section \ref{subsec: Type 2 measures}. In case $m=1$, these include the pushforward of Lebesgue measure $[0,1]$ under an analytic map such that the image is not contained in any affine subspace. Examples include the famous Veronese curve $t \mapsto (t, t^2, \ldots, t^n)$. 
\end{rem}
\begin{rem}
    The exact definition of {\em Type 3} measures will appear in Section \ref{subsec: Type 3 measures}. In case $n=1$ or $m=1$, the examples include $s$-dimensional Hausdorff measure on the limit set of a finite system of compact similarity maps satisfying an open condition, with $s$ equals the Hausdorff dimension of the fractal. In this case, the fractals should not be contained in any affine subspace of $\R^n$ or $\R^m$ respectively. 
\end{rem}

\begin{rem}
In \cite{MRS}, Moshchevitin, Rao, and Shapira prove that under certain explicit conditions on $A$, for any nontrivial algebraic measure (see loc. cit. for the definition) $\mu$ on the $m$ torus $\mathbb{T}^m$, $\mu(\BadA) = 0$. They further ask if such a result can be proven for other classes of measures. Theorem \ref{thm:Single Inhomogeneous case 1} provides an answer to this question. We note that this result for the Lebesgue measure on the torus was known earlier. In dimension $1$, due to Kim \cite{Kim}, and in higher dimension due to Shapira \cite{Shapira}. See also \cite{BDGW23a, Kim23, Mosh}.
\end{rem}

\begin{rem}
    Theorem \ref{thm:Single Inhomogeneous case 1} sets a dichotomy: for any $A \in \Mat$, either $\GenA$ is an empty set or it is large, in light of the fact that it gets full measure w.r.t. all measures of {\em Type 1} (defined in Section \ref{subsec:Type 1 measures}).
\end{rem}

Also using the results of \cite{ProhaskaSertShi} in place of Theorem \ref{Intermediate Theorem}, we get the following result:
\begin{thm}
\label{thm:Single Inhomogeneous case 2}
    For every $b \in \R^m$ and for every measure $\gamma$ of {\em Type $3'$} (defined in Section \ref{subsec: Type 3 measures},) we have $$\gamma(\Badb)= \gamma(\Dirichletb)= 0.$$ Moreover, if $b \notin \Q^m$, we have $\gamma(\Genb)=1$. 
\end{thm}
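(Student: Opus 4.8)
The argument runs parallel to the proof of Theorem~\ref{thm:Single Inhomogeneous case 1}, with the equidistribution input supplied by \cite{ProhaskaSertShi} in place of Theorem~\ref{Intermediate Theorem}. The plan is to run the Dani correspondence for affine forms with $b$ held fixed, reduce all three assertions to one Birkhoff equidistribution statement for the $a_t$-orbit of $x_{(A,b)}:=[u(A),(b,0)^T]\Gamma$, and then produce that statement from \cite{ProhaskaSertShi}. Note first that $u(A)(b,0)^T=(b,0)^T$, so $x_{(A,b)}=u(A)y_b$ with $y_b:=[I_{m+n},(b,0)^T]\Gamma$; hence $\pi(x_{(A,b)})=u(A)\,\SL_{m+n}(\Z)$ and the whole orbit is obtained by flowing the horospherical translate of a single fixed point $y_b$.

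First I would set up the dynamical dictionary. A lattice point of $a_tx_{(A,b)}$ has the form $\bigl(e^{t}(Aq+p+b),\ e^{-mt/n}q\bigr)$; writing $Q=e^{mt/n}$ one checks, by the usual balancing, that $A\in\Badb$ iff $\inf_{t\ge0}\delta_0(a_tx_{(A,b)})>0$, where $\delta_0(x)$ is the infimum of the norms of the points of the affine lattice $x$ having nonzero $\R^n$-coordinate, and that $A\in\Dirichletb$ iff there is $\lambda>0$ with $a_tx_{(A,b)}$ meeting the box $B_\lambda:=\{\,\|x_1\|\le\lambda,\ \|x_2\|\le1\,\}$ in a point with nonzero $\R^n$-coordinate for all large $t$ (here, when $b\notin\Q^m$, for large $t$ the points with zero $\R^n$-coordinate have norm tending to $\infty$ and so play no role). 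By the full support of $\mu_\X$ and the Siegel mean-value formula, the open sets $\{x:\delta_0(x)<\e\}$ and $V_\lambda:=\{x:x\cap B_\lambda=\emptyset\}$ are nonempty of positive $\mu_\X$-measure for every $\e,\lambda>0$, and the analogous sets inside any closed homogeneous sub-bundle of $\X$ have positive measure for the Haar measure of that sub-bundle. Consequently, if $x_{(A,b)}$ is Birkhoff generic with respect to $\mu_\X$, then its orbit visits each of these sets along a set of times of positive density, so $A\notin\Badb$, $A\notin\Dirichletb$, and $A\in\Genb$ by the very definition of $\Gen$; and the statements for $\Bad$ and $\Dirichlet$ alone follow already if the orbit merely equidistributes with respect to the Haar measure of some proper closed homogeneous sub-bundle $\X_b\subsetneq\X$ that is a finite cover of $\X'$. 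Hence it suffices to prove that for $\gamma$-a.e.\ $A$ the orbit $\{a_tx_{(A,b)}\}_{t\ge0}$ equidistributes with respect to $\mu_\X$ when $b\notin\Q^m$, and with respect to the Haar measure of the appropriate $\X_b$ in general.

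Next I would invoke \cite{ProhaskaSertShi}. The class of {\em Type $3'$} measures is defined precisely so that such a $\gamma$ is the stationary measure of an expanding-on-average random walk supported on the horospherical subgroup $U=\{u(A):A\in\Mat\}$, of the kind controlled by the equidistribution and stationary-measure classification theorems of \cite{ProhaskaSertShi} on the space of affine lattices $\X$. Applying those results to the fixed point $y_b$, and using the self-similarity of $\gamma$ to pass (in the manner of \cite{SimmonsWeiss}) from equidistribution of the random walk to Birkhoff genericity of the $a_t$-flow, yields that for $\gamma$-a.e.\ $A$ the orbit $\{a_tu(A)y_b\}_{t\ge0}$ equidistributes with respect to the homogeneous probability measure supported on $\overline{Uy_b}$. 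It then remains to identify $\overline{Uy_b}$: by the stationary-measure classification of \cite{ProhaskaSertShi} together with the irreducibility built into {\em Type $3'$} (which forces full equidistribution on the base $\X'$), the only possibilities are $\X$ itself and the torsion sub-bundles $\X_b$, and $y_b$ lies in no proper torsion sub-bundle precisely when $b\notin\Q^m$. Thus the limit is $\mu_\X$ when $b\notin\Q^m$ and the Haar measure of the relevant $\X_b$ otherwise, and combining this with the previous paragraph gives $\gamma(\Badb)=\gamma(\Dirichletb)=0$ for all $b$ and $\gamma(\Genb)=1$ for $b\notin\Q^m$.

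I expect the crux to be the identification of $\overline{Uy_b}$, i.e.\ the exclusion of the possibility that the orbit is trapped in a proper closed $a_t$-invariant set strictly smaller than $\X$ (respectively than $\X_b$); this is precisely the stationary-measure classification imported from \cite{ProhaskaSertShi}, and it is the only place the hypothesis $b\notin\Q^m$ is used. A secondary, essentially bookkeeping, task is to check that {\em Type $3'$} measures genuinely satisfy the hypotheses of \cite{ProhaskaSertShi} and that its conclusions remain valid starting from the not-obviously-generic point $y_b$; the Dani-type translation itself, including the treatment of the points with zero $\R^n$-coordinate and of the boundary of $B_\lambda$, is by now standard (cf.\ \cite{KleinbockWadleigh,EinTse}).
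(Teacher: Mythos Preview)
Your overall plan coincides with the paper's: feed the fixed point $y_b=[I_{m+n},(b,0)^T]\Gamma$ into the equidistribution theorem of \cite{ProhaskaSertShi} (recorded here as Theorem~\ref{thm: genericity of Type 3' measures}), identify the limiting homogeneous measure according to whether $b$ is rational, and finish with the Dani correspondence. The one concrete error is your description of the random walk and, consequently, of the target orbit closure.

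The random walk associated with a Type~$3'$ measure is \emph{not} supported on the horospherical group $U=\{u(A):A\in\Mat\}$. As in Section~\ref{subsec: Type 3 measures}, each $e\in E$ is identified with
\[
g_e=\begin{pmatrix}\rho_e^{-n/(m+n)}O_e^{-1}&-\rho_e^{-n/(m+n)}O_e^{-1}B_e\\ 0&\rho_e^{m/(m+n)}O_e'\end{pmatrix}\in G',
\]
a product of a rotation, a diagonal element, and a unipotent; the Type~$3'$ hypothesis $\phi_e(0)=0$ for some $e$ is not about landing in $U$ but is part of what puts you inside the scope of \cite[Thm.~1.11]{ProhaskaSertShi}. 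That theorem gives, for $\gamma$-a.e.\ $A$, equidistribution of $\{a_t u(A)y_b\}_{t\ge0}$ towards $\mu_{\overline{G'\cdot y_b}}$, the Haar measure on the $G'$-orbit closure---not on $\overline{U y_b}$. This distinction is not cosmetic: since $u(A)$ fixes $(b,0)^T$, one has $U y_b=[I_{m+n},(b,0)^T]\cdot U\Gamma/\Gamma$, and as $U\cap\Gamma$ is a lattice in $U$ this is already a closed $mn$-torus. It is never $\X$ nor a torsion sub-bundle, so the dichotomy you assert for $\overline{U y_b}$ cannot hold.

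Replacing $U$ by $G'$ repairs the argument and recovers exactly the paper's proof. By Ratner, $\overline{G'\cdot y_b}$ is homogeneous; since the $\SL_{m+n}(\Z)$-orbit of $(b,0)\bmod\Z^{m+n}$ in $\T^{m+n}$ is finite when $b\in\Q^m$ and dense when $b\notin\Q^m$, one gets $\overline{G'\cdot y_b}=\X$ in the irrational case (hence Birkhoff genericity and $\gamma(\Genb)=1$) and a finite cover of $\X'$ in the rational case (still enough for $\gamma(\Badb)=\gamma(\Dirichletb)=0$ via Dani).
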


\begin{rem}
    It turns out that even though the questions involved in Theorem \ref{thm:Single Inhomogeneous case 1} and Theorem \ref{thm:Single Inhomogeneous case 2} look very similar, the associated random walks have completely different properties. As a result, both the problems are of different levels of difficulty. For Theorem \ref{thm:Single Inhomogeneous case 2}, we can use existing results, namely the work of Eskin and Lindenstrauss \cite{EskLin1} along with the strategy of Simmons and Weiss \cite{SimmonsWeiss}, in the form of \cite[Thm.~1.10]{ProhaskaSertShi} to prove the result. Theorem \ref{thm:Single Inhomogeneous case 1}  turns out to be substantially more involved and needs the full strength of the new dynamical results proved in this paper.
\end{rem}

Combining Fubini's Theorem with Theorem \ref{thm:Single Inhomogeneous case 1} and Theorem \ref{thm:Single Inhomogeneous case 2} we get that
\begin{thm}
    \label{thm:Inhomogeneous}
    Let $\lambda$ be a probability measure on $\M_{m \times n}(\R) \times \R^m$ which is in the convex hull of measures of the form:
    \begin{enumerate}
        \item $\{\alpha \times \beta: \alpha \text{ is of \em{Type 1} and $\beta$ is of \em{ Type 3}}\}$
        \item $\{\alpha \times \beta: \alpha \text{ is of \em{Type 2} and $\beta$ is of \em{ Type 3}}\}$
        \item $\{\alpha \times \beta: \text{ $\alpha$ is of \em{ Type $3'$}}\}$.
    \end{enumerate}
    Then $\lambda(\Bad)= \lambda(\Dirichlet)= 0$.
\end{thm}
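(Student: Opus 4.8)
\textbf{Proof proposal for Theorem \ref{thm:Inhomogeneous}.}

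The plan is to reduce the statement for a general $\lambda$ in the convex hull to the three generating families, and then treat each family by a Fubini argument on top of the already-established single-metric results. Since $\lambda(\Bad)=0$ and $\lambda(\Dirichlet)=0$ are linear conditions on $\lambda$ (the sets $\Bad$ and $\Dirichlet$ are fixed Borel sets, and $\lambda \mapsto \lambda(S)$ is affine), it suffices to prove the conclusion when $\lambda$ is one of the extreme measures, i.e.\ a product $\alpha\times\beta$ of one of the three listed forms. Here I should be slightly careful: a measure in the convex hull is a (possibly infinite) mixture $\lambda = \int \lambda_\omega \, d\kappa(\omega)$ of generators, so I would first note that $\lambda(\Bad) = \int \lambda_\omega(\Bad)\, d\kappa(\omega)$ by the monotone/dominated convergence theorem applied to the Borel function $\omega \mapsto \lambda_\omega(\Bad)$; if each $\lambda_\omega(\Bad)=0$ then $\lambda(\Bad)=0$, and likewise for $\Dirichlet$.

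For case (1), let $\alpha$ be of Type 1 and $\beta$ of Type 3 on $\R^m$, and consider $\lambda=\alpha\times\beta$ on $\Mat\times\R^m$. By Theorem \ref{thm:Single Inhomogeneous case 1} there is a set $\mathcal{M}\subset\Mat$ with $\beta(\mathcal{M})=1$ (Type 3 measures give $\mathcal{M}$ full measure) such that for every $A\in\mathcal{M}$ and every Type 1 measure $\alpha$ we have $\alpha(\BadA)=\alpha(\DirichletA)=0$. Since $\BadA$ is exactly the $A$-slice $\{b : (A,b)\in\Bad\}$, Fubini's theorem gives
\begin{align*}
\lambda(\Bad) \;=\; \int_{\Mat}\alpha(\BadA)\,d\beta(A) \;=\; \int_{\mathcal{M}}\alpha(\BadA)\,d\beta(A) \;=\; 0,
\end{align*}
and identically $\lambda(\Dirichlet)=0$. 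Case (2) is the same argument verbatim with $\alpha$ of Type 2 in place of Type 1, using that $\mathcal{M}$ also gets full measure with respect to Type 2 measures (again part of Theorem \ref{thm:Single Inhomogeneous case 1}, and noting that every $A\in\mathcal{M}$ works for \emph{every} $\alpha$, in particular Type 2 ones). Here one needs the Borel measurability of $A\mapsto\alpha(\BadA)$ to invoke Fubini; this follows since $\Bad$ is Borel (it is an $F_\sigma$: a countable union over $c$ of closed-type conditions) so the slice function is Borel by the measurable-section properties of product $\sigma$-algebras.

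For case (3), $\lambda = \alpha$ is of Type $3'$ on $\Mat\times\R^m$ (so it already incorporates both variables). Here I would apply Theorem \ref{thm:Single Inhomogeneous case 2}: for every fixed $b\in\R^m$ and every Type $3'$ measure $\gamma$ we have $\gamma(\Badb)=\gamma(\Dirichletb)=0$. But this requires disintegrating $\alpha$ over the $b$-coordinate, which only makes sense if the conditional measures of $\alpha$ on $\Mat$-slices are again of Type $3'$ — and indeed the formulation of Theorem \ref{thm:Single Inhomogeneous case 2} is stated directly for a Type $3'$ measure $\gamma$ on the matrix space with $b$ \emph{fixed}, so the cleanest route is to disintegrate $\alpha = \int \gamma_b \, d\bar\alpha(b)$ where $\bar\alpha$ is the pushforward to $\R^m$ and each $\gamma_b$ is (by the defining structure of Type $3'$ measures in Section \ref{subsec: Type 3 measures}) a Type $3'$ measure on the fibre; then $\alpha(\Bad) = \int \gamma_b(\Badb)\, d\bar\alpha(b) = 0$, and similarly for $\Dirichlet$. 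The main obstacle is precisely this bookkeeping about what a ``Type $3'$ measure on $\Mat\times\R^m$'' means and checking its slices/conditionals remain Type $3'$ — this is a definitional matter that is settled once the Type $3'$ class is unpacked, and I expect it to be routine but is the only step requiring genuine care; everything else is Fubini plus linearity of $\lambda\mapsto\lambda(S)$.
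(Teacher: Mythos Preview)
Your overall strategy --- reduce to the generating product measures by linearity/convexity, then apply Fubini together with Theorems \ref{thm:Single Inhomogeneous case 1} and \ref{thm:Single Inhomogeneous case 2} --- is exactly the paper's (one-line) argument, and your treatment of case (1) is correct.

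You have, however, misread case (3). The measure there is still a \emph{product} $\alpha \times \beta$, with $\alpha$ of Type $3'$ on $\Mat$ and $\beta$ an arbitrary probability measure on $\R^m$; it is not a single Type $3'$ measure on the product space. Hence there is no disintegration to perform and no question of whether conditional slices remain Type $3'$. You just Fubini in the other direction: by Theorem \ref{thm:Single Inhomogeneous case 2}, for \emph{every} $b \in \R^m$ one has $\alpha(\Badb) = \alpha(\Dirichletb) = 0$, so
\[
\lambda(\Bad) \;=\; \int_{\R^m} \alpha(\Badb)\, d\beta(b) \;=\; 0,
\]
and likewise for $\Dirichlet$. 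The ``main obstacle'' you flag simply does not arise. Your handling of case (2) also has the roles tangled: Type 2 measures live on $\Mat$, not on $\R^m$, so the claim ``every $A\in\mathcal{M}$ works for every $\alpha$, in particular Type 2 ones'' is not what Theorem \ref{thm:Single Inhomogeneous case 1} says (it only asserts $\alpha(\BadA)=0$ for Type 1 $\alpha$ on $\R^m$). In cases (1) and (2) the $\Mat$-factor is the one of Type 2 or Type 3 giving $\mathcal{M}$ full measure, while the $\R^m$-factor is of Type 1; the Fubini computation is then identical in both.
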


\subsection*{Previous work}
We survey the literature concerning measure theoretic Diophantine approximation on fractals briefly. Subsequent to the important work of Einsiedler, Fishman and Shapira, \cite{EinsiedlerFishmanShapira}, there was the breakthrough \cite{SimmonsWeiss} where it was demonstrated that both $\Badzero$ and $\Dirichletzero$ have zero measure for a wide class of fractals. A weighted analogue of $\Badzero$ was examined in \cite{ProhaskaSertShi} and was shown to have measure zero. The two previous works use the technology of random walks on homogeneous spaces. In \cite{BDGW23}, a very general approach using techniques from geometric measure theory and Diophantine approximation was developed to show that badly approximable sets in a wide variety of contexts have measure zero. Another important advance was made in the paper \cite{KhalilLuethi} of Khalil and Luethi  where a Khintchine theorem is proven for certain measures. Their results have subsequently been improved in certain cases by Datta and Jana \cite{DattaJana} and by B\'{e}nard, He and Zhang \cite{BenardHeZhang}. Singular vectors on fractals have been studied by Khalil \cite{Khalilsing} and further in a weighted context in \cite{aggarwalghoshpacking}. Inhomogeneous singular forms are studied in  \cite{aggarwal2025}.

\subsection{Structure of the paper}
The paper is divided into three parts. Sections 2-8 constitute the first part of the paper. The main goal of this part is to prove Theorem \ref{Main Random walk}. Sections \ref{Notation},\ref{Conditional Measures},\ref{Horocycle Flows},\ref{Disintegration of nu along stabilizers} are mostly devoted to introducing notation and borrow significantly from the work of Benoist and Quint taken from \cite{benoistquinttranslation} applied to our case. We have included them to keep the paper self-contained. Section \ref{Basic Facts and Observations} contains some key observations that are essential for the exponential drift argument in section \ref{The exponential drift}. Section \ref{The exponential drift} contains the exponential drift argument. This requires significant new ideas to deal with non-expanding random walks. The exponential drift will allow us to decompose a stationary measure $\mu$ into homogeneous measures. Section \ref{Proof of Theorem Main Random walk} deduces that such a decomposition implies that $\mu$ equals $\mu_{\X}$. In \cite{BQ1}, \cite{SimmonsWeiss}, such a deduction was proved using expanding properties of the random walk. Concretely, it was shown in \cite{BQ1} and \cite{SimmonsWeiss} that the only possible decomposition of stationary measures into homogeneous measures is the trivial decomposition. However, in our case, we cannot use any such argument. This is not only due to the fact that the random walk is not expanding but also because a non-trivial decomposition of $\mu_{\X}$ into homogeneous measures is possible. Thus a completely new argument is needed here as well.\\

Sections \ref{Proof of Theorem Random Genericity}, \ref{Application to Homogeneous Dynamics} constitute the second part of the paper. Section \ref{Proof of Theorem Random Genericity} gives a proof of Theorem \ref{Random Genericity}. This section is independent of previous sections and can be read on its own. The section \ref{Application to Homogeneous Dynamics} defines {\em Type 1} measures and provides a proof of Theorem \ref{Intermediate Theorem}. If the reader assumes Theorems \ref{Main Random walk}, \ref{Random Genericity}, then they can read this section without reading previous sections.\\

Section \ref{Diophantine Approximation} constitutes the third part of the paper. This section proves the Diophantine results mentioned above. The reader who is only interested in the Diophantine part can assume Theorem \ref{Intermediate Theorem} and can directly read this section without reading previous sections.

\vspace{1cm}
\section{Random Walks} \label{Random Walk}

\subsection{Notation}
\label{Notation}
The main goal of this part of the paper is to prove Theorem \ref{Main Random walk}. Before starting, we fix a measure $\nu$ on $G$ as in Theorem \ref{Main Random walk} and a $\nu$-stationary measure $\mu$ on $\X$. We will denote by $E$ the support of the measure $\nu$. In what follows, we will sometimes also treat $E$ as an indexing set and use the notation $g_e := e$ for $e \in E$, to denote the corresponding element in $G$. Also, we will sometimes treat $\nu$ as a measure on $E$, instead of $G$.\\

We define the following four dynamical systems required for the proof:\\

{\bf The system $(B, \cB, \beta, T)$}. Here $B= E^\N$, which is equipped with the product $\sigma$-algebra $\cB$ and measure $\beta = \nu^{\otimes \N}$. We denote by $T: B \rightarrow B$ the shift map, which clearly preserves $\beta$.\\

{\bf The system $(B^\tau, \cB^\tau, \beta^\tau, (T^\tau_l)_{l \in \R})$}. Define a map $\tau: B \rightarrow \R$ as $\tau(b)= - \log (\rho_{b_1})$, where for $e \in E$, $\rho_e$ is defined as in \eqref{eq: def e}. For $p \geq 0$, and $b \in B$, denote
\begin{align}
    \label{eq: def tau p}
    \tau_p(b) = \tau(T^p(b)) + \cdots + \tau(b).
\end{align}
We denote by $(B^\tau, \cB^\tau, \beta^\tau, T^\tau) $ the suspension of $(B, \cB, \beta, T)$ with roof function $\tau$, i.e., 
\begin{align}
    \label{eq: def B tau}
    B^\tau = \{c= (b,k) \in B \times \R: 0 \leq k < \tau(b)\}.
\end{align}
The measure $\beta^\tau$ is obtained by normalizing the restriction to
$B^\tau$ of the product measure of $\beta$ and the Haar measure of $\R$, the $\sigma$-algebra $\cB^\tau$ is the product $\sigma$-algebra, and for almost every $l \in \R_+$ and $c =
(b,k) \in B^\tau$, 
$$
T^\tau_l(c) = \left(T^{p_l(c)}b, k+l - \tau_{
    p_l(c)}(b) \right)
$$
where
$$
p_l(c) = \max\{ p \in \N: k+l - \tau_{ p}(b) \geq 0\}.
$$
The flow $T^\tau_l$ is then defined for all positive times and preserves the measure $\beta^\tau$ (see e.g. \cite[Lem. 2.2]{BQ1}). It is easy to see that $\beta^\tau$ is a $T_l^\tau$-ergodic measure.\\

{\bf The system $(B^\X, \cB^\X, \beta^\X, T^\X)$}.
We denote by $B^\X = B \times \X$ equipped with the product sigma-algebra $\cB^\X$. We denote by $T^\X$ the transformation on $B^\X =  B \times \X$ given by, for $(b,x) \in B^\X$,
\begin{align}
    \label{eq:def T X}
    T^\X(b,x) = (Tb, b_0^{-1}x).
\end{align}
Define the probability measure $\mu_b$ on $\X$ as
\begin{align}
    \label{def: mu b}
    \mu_b = \lim_{p \rightarrow \infty} ({b_0}_* \cdots {b_p}_*)\mu.
\end{align}
The existence of the limit in \eqref{def: mu b} for $\beta$-a.e. $b \in B$ is shown in \cite[Section~3]{BQ1}, where it is  furthermore shown that for $\beta$-a.e. $b \in B$, we have
\begin{align}
    \mu_b = b_{0*}\mu_{Tb}, \label{Invariance under T of mu b} \\
    \mu = \int_{B} \mu_{b} \, d\beta(b). \label{mu equal integral of mu b}
\end{align}
The measure $\beta^\X$ is defined as
\begin{align}
    \label{eq:def beta X}
    \beta^\X = \int_{B} \delta_b \otimes \mu_b \, d\beta(b).
\end{align}
It is easy to see that $\beta^\X$ is $T^\X$-invariant using \eqref{Invariance under T of mu b}.

{\bf The system $(B^{\tau, \X}, \cB^{\tau, \X}, \beta^{\tau, \X}, (T^{\tau, \X}_l)_{l \in \R})$}.
We denote $B^{\tau, \X} = B^\tau \times \X$ equipped with the product sigma algebra $\cB^{\tau, \X}$. For $\beta^\tau$-a.e. $c= (b,k) \in B^\tau$, we denote $\mu_c= \mu_b$, where $\mu_b$ is defined in \eqref{def: mu b}.

For $l \geq 0$ and for $\beta^\tau$-a.e. $c = (b,k) \in B^\tau$,
we introduce the map $\rho_l(c)$ of $\X$ given by, for any $x \in
\X$, 
$$
\rho_l(c)x = b^{-1}_{p_l(b,k)-1} \cdots b_0^{-1} x.
$$
We define for $(c,x) \in B^{\tau, X}$
and $l \geq 0$, 
$$
T^{\tau, X}_l (c,x) = \left( T^\tau_l c, \rho_l(c)x\right).
$$
By \cite[Lem.~3.6]{BQ1}, for $\beta^\tau$-a.e. $c = (b,k) \in B^\tau$ and for every $l
\geq 0$, one has 
$$
\mu_{T^\tau_l c} = \rho_l(c)_* \mu_c.
$$
Thus, we have that for all $l \geq 0$, the transformation $T_l^{\tau, X}$ of
$B^{\tau, X}$ preserves the measure $\beta^{\tau, X}$ (see also \cite[Lem.~3.7]{BQ1}).


\section{Basic Facts and Observations}
\label{Basic Facts and Observations}

\begin{lem}
\label{lem:Imp Project mu b equal mu}
    For $\beta$-a.e. $ b \in B$, we have
\begin{align}
    \label{eq: Imp Project mu b equal mu}
    \pi_*(\mu_b) = \mu_{\X'}.
\end{align}
\end{lem}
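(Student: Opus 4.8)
The plan is to exploit the $G$-equivariance of $\pi$ together with the observation that the linear parts of the elements of $E$ already lie in $G' = \SL_{m+n}(\R)$: this makes $\mu_{\X'}$ invariant under every finite truncation of the averaging procedure defining $\mu_b$, and the conclusion then follows by passing to the weak-$*$ limit.

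First, recall that $\pi : \X \to \X'$ is equivariant for the homomorphism $\pi_G : G \to G'$, i.e. $\pi(gx) = \pi_G(g)\pi(x)$, so that $\pi_*(g_*\eta) = \pi_G(g)_*\,\pi_*\eta$ for every $g \in G$ and every Borel measure $\eta$ on $\X$. Second, by \eqref{eq: def e} the linear part of $e^{-1}$ is the block-diagonal matrix $\mathrm{diag}(\rho_e O_e,\ \rho_e^{-m/n} I_n)$, which has determinant $\pm 1$ and hence lies in $G' = \SL_{m+n}(\R)$; since $\pi_G$ is a group homomorphism and $G'$ is a group, $\pi_G(e) \in G'$ for every $e \in E$. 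Combining these facts with the hypothesis $\pi_*\mu = \mu_{\X'}$ of Theorem \ref{Main Random walk} and the $G'$-invariance of $\mu_{\X'}$, we obtain, for every $b = (b_p)_{p \in \N} \in B$ and every $p \geq 0$,
\begin{align*}
\pi_*\big({b_0}_* \cdots {b_p}_* \mu\big) &= \pi_G(b_0)_* \cdots \pi_G(b_p)_*\, \pi_*\mu \\
&= \pi_G(b_0)_* \cdots \pi_G(b_p)_*\, \mu_{\X'} = \mu_{\X'}.
\end{align*}

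Finally, for $\beta$-a.e. $b \in B$ the measures ${b_0}_* \cdots {b_p}_* \mu$ converge weakly to $\mu_b$ as $p \to \infty$; this is precisely the content of \eqref{def: mu b}, whose existence is recalled from \cite[Section~3]{BQ1}. Since $\pi$ is continuous (indeed proper, being a torus-bundle projection), the pushforward map $\eta \mapsto \pi_*\eta$ is continuous for the weak-$*$ topology on the space of probability measures, so
\[
\pi_*\mu_b = \lim_{p \to \infty} \pi_*\big({b_0}_* \cdots {b_p}_* \mu\big) = \mu_{\X'},
\]
which is the assertion of the lemma. There is no genuine obstacle in this argument; the only point deserving a line of care is the interchange of $\pi_*$ with the weak-$*$ limit in \eqref{def: mu b}, which is immediate because $f \circ \pi \in C_b(\X)$ for every $f \in C_b(\X')$ (and $\pi_*$ preserves total mass).
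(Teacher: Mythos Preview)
Your proof is correct and follows essentially the same route as the paper: use the equivariance of $\pi$ to compute $\pi_*({b_0}_*\cdots{b_p}_*\mu) = \pi_G(b_0\cdots b_p)_*\mu_{\X'} = \mu_{\X'}$, then pass to the weak-$*$ limit using continuity of $\pi$. The only superfluous step is your verification that $\pi_G(e)\in G'$, which is automatic since $\pi_G:G\to G'$ is by definition the projection onto $G'$.
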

\begin{proof}
    Note that $\pi: \X \rightarrow \X'$ is continuous, hence we have
    $$
        \pi_* \left( \lim_{p \rightarrow \infty} ({b_0}_* \cdots {b_p}_*)\mu \right) = \lim_{p \rightarrow \infty} \pi_* \left(({b_0}_* \cdots {b_p}_*)\mu \right).
    $$ 

    Thus, we have
    \begin{align*}
        \pi_*(\mu_b) &= \pi_* \left( \lim_{p \rightarrow \infty} ({b_0}_* \cdots {b_p}_*)\mu \right)  \\
        &= \lim_{p \rightarrow \infty} \pi_* \left(({b_0}_* \cdots {b_p}_*)\mu \right) \\
        &= \lim_{p \rightarrow \infty} (\pi_{G}({b_0} \cdots {b_p}))_*  \mu_{\X'} \\
        &= \mu_{\X'},
    \end{align*}
    where the last equality follows from $G'$-invariance of $\mu_{\X'}$. 
\end{proof}

We follow the notation of \cite{BQ1} to parameterize the branches of the inverses
of $T^{\tau, \X}_l$. For $q \geq 0$ and $a,b \in B$, we denote by $a[q]$
the beginning of the word $a$ written from right to left as
$a[q]=(a_{q}, \ldots, a_1)$ and $a[q]b \in B$ the concatenated
word 
$$a[q]b=(a_{q}, \ldots, a_1, b_0, b_1, \ldots, b_p, \ldots).$$
For $c = (b,k) \in B^{\tau}$ and $l$ in $\R_+$, let $q_{l,c}:
B\to \N$, $h_{l,c}: B \rightarrow B^{\tau}$ and $h_{l, c,x}: B \to B^{\tau}$ the maps given, for $a \in B$, by 
\begin{align*}
    q_{l, c} = \widetilde q_{l, c'} \ \ \text{and} \ \ h_{l, c} =
\widetilde h_{l, c'}, \ \ \text{where} \ \ c' = T^\tau_l(c)
\end{align*}
and 
$$
\widetilde q_{l, c}(a) = \min\{q \in \N: k-l+\tau_{q}(a[q]b) \geq 0\},
$$
$$
\widetilde h_{l, c}(a) = (a[q]b, k-l+\tau_{ q}(a[q]b)) \ \ \text{with} \ \ q = \widetilde q_{l, c}(a).
$$
Since, $\tau(b) = - \log \rho_b  \geq -  \log ( \max_{e \in E} \rho_e)  > 0$, we have that the function $\widetilde q_{l, c}$ is always finite and the image of the map $\widetilde h_{l,c,x}$ is the fiber $(T^{\tau,\X}_l)^{-1}(c,x)$. The function $q_{l,c}$ is thus also always finite.

Note that the image of the map $h_{l, c,x} $ is the fiber of
$T^{\tau, \X}_l$ passing through $(c,x)$:
$$
\{(c'', x'') \in B^{\tau, \X} : T^{\tau,\X}_l(c'',x'') = T^{\tau_l,\X}(c)\}, 
$$
that is the atom of $c$ in the partition associated with the
$\sigma$-algebra $$\QQ_l^{\tau, X} := \left(T_l^{\tau, X} \right)^{-1}
(\cB^{\tau, X}).$$ 

For $c = (b,k) \in B^{\tau}$ and $l$ in $\R_+$, let us define $h_{l,c,x}: B\to B^{\tau, \X}$ as
\begin{align}
    \label{eq: def h l c x}
    h_{l,c,x}(a)= (h_{l,c}(a), \rho_l(h_{l,c}(a))^{-1} \rho_l(c)x )
\end{align}

Denote by $\QQ_\infty^{\tau, X}$ the tail
$\sigma$-algebra of $\left(B^{\tau, X}, \cB^{\tau, X}, \beta^{\tau,
    X}, T^{\tau, X} \right)$, that is the decreasing intersection of
sub-$\sigma$-algebras $\QQ_\infty^{\tau, X} = \bigcap_{l \geq 0}
\QQ_l^{\tau, X}$.

\begin{prop}[{\cite[Prop.~2.3, Cor.~3.8]{BQ1}}]
\label{cor:BQ 3.8}
For any $\beta^{\tau, X}$-integrable function $\varphi : B^{\tau, X}
\to \R$, for every $l \geq 0$, for $\beta^{\tau, X}$-a.e. $(c,x)
\in B^{\tau, X}$, one has 
\begin{equation}
    \label{eq: BQ 3.11}
    \EE \left( \varphi | \QQ^{\tau, X}_l \right) (c,x) = \int_{B}
\varphi(h_{l, c,x}(a)) d\beta(a).
\end{equation}
\end{prop}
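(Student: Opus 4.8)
\emph{Step 1: reduction to a disintegration identity.} The plan, following \cite{BQ1}, is to recognize the integral on the right of \eqref{eq: BQ 3.11} as the disintegration of $\beta^{\tau, X}$ along the fibres of $T^{\tau, X}_l$. Directly from the definitions, for $\beta^{\tau, X}$-a.e.\ $(c,x)$ the atom of $(c,x)$ in the partition attached to $\QQ^{\tau, X}_l = (T^{\tau, X}_l)^{-1}(\cB^{\tau, X})$ is the fibre $h_{l,c,x}(B) = (T^{\tau, X}_l)^{-1}\big(T^{\tau, X}_l(c,x)\big)$, which is exactly the computation indicated just before Proposition \ref{cor:BQ 3.8} (solve $T^\tau_l c'' = c'$ by $c'' = \widetilde h_{l,c'}(a)$, then read off the $\X$-coordinate). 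In particular $h_{l,c,x}$ depends on $(c,x)$ only through $T^{\tau, X}_l(c,x)$, so the function $(c,x) \mapsto \int_B \varphi(h_{l,c,x}(a))\,d\beta(a)$ is $\QQ^{\tau, X}_l$-measurable, and what remains is the averaging property of conditional expectation. Since every $\QQ^{\tau, X}_l$-measurable function is of the form $g \circ T^{\tau, X}_l$ and $\beta^{\tau, X}$ is $T^{\tau, X}_l$-invariant, this reduces to proving the single identity
\begin{equation}
\label{eq: disint plan}
\int_{B^{\tau, X}} \Psi \, d\beta^{\tau, X} = \int_{B^{\tau, X}} \Big( \int_B \Psi\big( h_{l,c,x}(a) \big)\, d\beta(a) \Big)\, d\beta^{\tau, X}(c,x)
\end{equation}
for every bounded measurable $\Psi : B^{\tau, X} \to \R$; one then recovers \eqref{eq: BQ 3.11} by applying \eqref{eq: disint plan} to $\Psi(c,x) = \varphi(c,x)\, g\big(T^{\tau, X}_l(c,x)\big)$ and using that $T^{\tau, X}_l(h_{l,c,x}(a)) = T^{\tau, X}_l(c,x)$ for every $a$.

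\emph{Step 2: the base case on $B^\tau$.} I would first establish the analogue of \eqref{eq: disint plan} for the suspension $(B^\tau, \cB^\tau, \beta^\tau, T^\tau_l)$, namely $\int_{B^\tau} F\, d\beta^\tau = \int_{B^\tau} \big( \int_B F(\widetilde h_{l,c'}(a))\, d\beta(a)\big)\, d\beta^\tau(c')$, for $0 < l < \inf_b \tau(b)$. In that range $T^\tau_l$ crosses at most one roof, so the fibre over $(b', k')$ is the single point $(b', k'-l)$ when $k' \geq l$ and, when $k' < l$, a family obtained by prepending one letter (distributed according to $\nu$); inserting this into the right-hand side, writing $\beta^\tau$ as the normalized restriction of $\beta \times \mathrm{Leb}$ to $B^\tau$ and using $\beta = \nu^{\otimes \N}$, one matches $\int_{B^\tau} F\, d\beta^\tau$ by cutting each flow box $\{b\} \times [0, \tau(b))$ into $[0, \tau(b)-l)$ (which yields the first term) and its top slice of length $l$ (which, after relabelling $b$ as a one-letter extension, yields the second). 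For general $l \geq 0$ one bootstraps: the $\QQ^\tau_l$ decrease in $l$, $T^\tau_{l_1 + l_2} = T^\tau_{l_1} \circ T^\tau_{l_2}$, the map $\widetilde h_{l_1 + l_2, c'}$ is the concatenation of the words prepended by $\widetilde h_{l_1, c'}$ and by $\widetilde h_{l_2, \cdot}$, and the tower property of conditional expectation propagates the identity to all times. This is \cite[Prop.~2.3]{BQ1}.

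\emph{Step 3: lifting to the skew product.} I would then deduce \eqref{eq: disint plan} from Step 2 using the decomposition $\beta^{\tau, X} = \int_{B^\tau} \delta_c \otimes \mu_c\, d\beta^\tau(c)$ and the cocycle relation $\mu_{T^\tau_l c} = \rho_l(c)_* \mu_c$ quoted above. Expanding the right-hand side of \eqref{eq: disint plan} through this decomposition, fix $c'$ and $a$ and put $c_a := \widetilde h_{l,c'}(a)$; then $T^\tau_l c_a = c'$, hence $\mu_{c'} = \rho_l(c_a)_* \mu_{c_a}$, and the substitution $x'' = \rho_l(c_a)^{-1} x'$ turns $\int_\X \Psi\big(c_a, \rho_l(c_a)^{-1} x'\big)\, d\mu_{c'}(x')$ into $\int_\X \Psi(c_a, x'')\, d\mu_{c_a}(x'')$. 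After Fubini, the right-hand side of \eqref{eq: disint plan} becomes $\int_{B^\tau} \big( \int_B G(\widetilde h_{l,c'}(a))\, d\beta(a)\big)\, d\beta^\tau(c')$ with $G(c) := \int_\X \Psi(c,x)\, d\mu_c(x)$, which by Step 2 equals $\int_{B^\tau} G\, d\beta^\tau = \int_{B^{\tau, X}} \Psi\, d\beta^{\tau, X}$, the left-hand side. This is \cite[Cor.~3.8]{BQ1}.

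\emph{Main obstacle.} The only non-formal step is Step 2: the disintegration of $\beta^\tau$ along the \emph{non-invertible} flow $T^\tau_l$ (the non-invertibility coming solely from the one-sided shift on $B = E^\N$, i.e.\ from forgetting $b_0$), and in particular the bookkeeping of roof-crossings and prepended letters needed to pass from short times to arbitrary $l$. Steps 1 and 3 are formal, using only the definition of $\QQ^{\tau, X}_l$, the $T^{\tau, X}_l$-invariance of $\beta^{\tau, X}$, Fubini, and the already-established relation $\mu_{T^\tau_l c} = \rho_l(c)_* \mu_c$, up to routine checks of measurability and of null sets.
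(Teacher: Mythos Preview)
The paper does not give a proof of this proposition; it is stated with a citation to \cite[Prop.~2.3, Cor.~3.8]{BQ1} and used as a black box. Your proposal is a correct outline of precisely the argument those citations point to: Step~1 is the standard reduction of a conditional-expectation identity to a disintegration formula, Step~2 is \cite[Prop.~2.3]{BQ1} (the suspension case), and Step~3 is \cite[Cor.~3.8]{BQ1} (the lift to the skew product via $\mu_{T^\tau_l c} = \rho_l(c)_* \mu_c$). So there is nothing to compare---you have reproduced the cited proof rather than diverged from it.
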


\begin{lem}
    \label{lem: Condition on Fractal}
    There exists a $\kappa >0, T >0$ and constants $C_1, C_2,  C_3>0$ such that for all $c= (b,k) \in B^\tau$, we have that for all $l \geq T$, there exists a measurable subset $Y_l(c) \subset B$ satisfying $\beta(Y_l(c)) > \kappa$ and for all $a \in Y_l(c)$ we have
    \begin{align}
        \label{eq: Condition on Fractal}
        \rho_l(h_{l,c}(a))^{-1} \rho_l(c) = [A_{l,c,a}, v_{l,c,a}],
    \end{align}
     which satisfy 
    $$
        A_{l,c,a} \in \left\{ \begin{pmatrix}
               e^t O \\ & e^{-mt/n} I_n
            \end{pmatrix}: |t| \leq 2C_1 , O \in \text{O}_{m}(\R)\right\}
    $$
    and $v_{l,c,a} \in \R^m \times \{0\}^n$ such that $e^{-C_1}C_2 < e^{-l}\|v_{l,c,a}\|_{\infty} < e^{C_1}C_3$.
\end{lem}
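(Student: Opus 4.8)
The plan is to unwind the definitions of $\rho_l$ and $h_{l,c}$ and track what the composition $\rho_l(h_{l,c}(a))^{-1}\rho_l(c)$ does as an element of $G$. Recall that for $c = (b,k)$, the map $\rho_l(c) = b_{p_l(b,k)-1}^{-1}\cdots b_0^{-1}$ is a product of inverses of elements of $E$, each of the form \eqref{eq: def e}. By the cocycle-type identity behind the construction of the suspension (essentially \cite[Lem.~3.6]{BQ1}), the element $\rho_l(h_{l,c}(a))^{-1}\rho_l(c)$ is itself a product of the form $a_p \cdots a_1 \cdot (b_{j}^{-1}\cdots b_0^{-1})$ or its inverse-corrected version; concretely it equals $g_{a[q]}^{-1} g_{b[j]}$ for suitable word lengths, where I write $g_{w}$ for the product of the elements indexed by a word $w$. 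The point is that all these elements lie in the semigroup generated by $E \cup E^{-1}$, so their $\SL_{m+n}(\R)$-parts are block-diagonal of the shape $\mathrm{diag}(e^{t}O, e^{-mt/n}I_n)$ with $O \in \mathrm{O}(m)$ — this block structure is preserved under products because every generator has it — and their translation parts lie in $\R^m \times \{0\}^n$, again because the $\R^n$-block of the translation is always $0$ and the linear part preserves $\R^m \times \{0\}^n$.

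Next I would control the size of the exponent $t$ and of the translation vector. The key quantitative input is that $T^\tau_l(h_{l,c}(a)) = c$ for $a$ in the relevant fiber, which forces $\tau_{q}(a[q]b) = k - l + (\text{the } k\text{-coordinate of } c)$, i.e. the accumulated roof time of the $a$-word is $l$ up to a bounded error governed by $\max_{e}|\log\rho_e|$ (the roof function $\tau$ is bounded above and below by positive constants on $E$). Since $\tau(b) = -\log\rho_{b_1}$ and the diagonal exponent contributed by $g_e$ is exactly $\log\rho_e$, the total diagonal exponent $t$ of $\rho_l(h_{l,c}(a))^{-1}\rho_l(c)$ is $\tau_q(a[q]b)$ minus the already-accumulated part from $\rho_l(c)$, which is $l + O(1)$ minus $l + O(1) = O(1)$; this is where the constant $C_1$ comes from, with $2C_1$ accounting for the two one-step overshoots at the two ends. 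For the translation vector, I would expand $v_{l,c,a}$ as a telescoping sum: when you multiply $[A', v'][A'', v''] = [A'A'', v' + A'v'']$, so $v_{l,c,a}$ is a sum of terms of the form (partial diagonal block applied to) $\pm w_{e}$, and each such term has $\el$-norm comparable to $e^{s}$ where $s$ is the accumulated exponent at that stage; the dominant term is the last one, of size $\asymp e^{l}$, giving $e^{-l}\|v_{l,c,a}\|_\infty \asymp 1$ with explicit constants $C_2, C_3$ depending on $\max_e\|w_e\|$, $\min_e\|w_e\|$ (using that $E$ is not virtually in $\SL_{m+n}$, so $w_e \neq 0$ on a positive-measure set — this is what lets us take $C_2 > 0$), $\max_e|\log\rho_e|$, and the comparison constant between the chosen norm and $\el$.

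Finally, for the measure lower bound $\beta(Y_l(c)) > \kappa$: I would take $Y_l(c)$ to be the set of $a \in B$ for which the first letter $a_1$ (or first few letters) satisfies $w_{a_1} \neq 0$ — more precisely, a fixed compact neighborhood in $E$ on which $\|w_e\|$ is bounded below, which has positive $\nu$-measure by the non-virtual-containment hypothesis — intersected with a full-measure set where the identities above hold. Since the constraint defining $Y_l(c)$ depends only on finitely many initial coordinates of $a$ (indeed on $a_1$, or on the first $q$ letters where $q$ is itself only bounded, not growing, because $\tau$ is bounded below so $q \leq l/\min_e(-\log\rho_e) + O(1)$ — wait, $q$ does grow with $l$; the correct statement is that the lower bound on $\|w_{a_{q}}\|$ depends only on the single last-multiplied letter, so $Y_l(c)$ is a cylinder constraint on one coordinate and has $\nu$-measure $= \nu(\{e : \|w_e\| \text{ bounded below}\}) =: \kappa > 0$, uniformly in $l$ and $c$).

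The main obstacle I anticipate is the bookkeeping in the telescoping sum for $v_{l,c,a}$: one must identify which summand dominates and show the others do not conspire to cancel it or to blow it up, which requires carefully ordering the product (right-to-left word convention of \cite{BQ1}) and checking that the accumulated diagonal exponents are monotone enough that the geometric series is dominated by its largest term. The block-diagonal preservation and the $O(1)$ bound on the total exponent are comparatively routine once the word-combinatorics of $\rho_l$ and $h_{l,c}$ from \cite[Sec.~3]{BQ1} are in hand; the positivity $C_2 > 0$ is where the structural hypothesis on $E$ enters in an essential way.
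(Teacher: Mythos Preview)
Your structural analysis is right: the block shape of $A_{l,c,a}$, the $O(1)$ bound on the exponent $t$ via the roof-function bookkeeping, and the upper bound on $e^{-l}\|v_{l,c,a}\|$ all go through essentially as you describe. The genuine gap is the lower bound $C_2 > 0$, and the obstacle you flag at the end is real and not resolved by your proposed $Y_l(c)$.

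A direct computation (carried out in the paper) gives, with $\phi_e(x) = \rho_e O_e x + w_e$,
\[
v_{l,c,a} \;=\; (\rho_{a_q}\cdots\rho_{a_1})^{-1}\, O_{a_q}^{-1}\cdots O_{a_1}^{-1}\bigl(\phi_{a_1}\circ\cdots\circ\phi_{a_q}(0) \;-\; \phi_{b_p}\circ\cdots\circ\phi_{b_1}(0)\bigr).
\]
The scalar prefactor is $\asymp e^{l}$, so $e^{-l}\|v_{l,c,a}\|$ is, up to a bounded multiplicative constant, the distance between two points each lying within $O(\rho^{L})$ of the attractor $\cK$ of the IFS $\{\phi_e\}_{e \in E}$. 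Your proposal to constrain a single letter of $a$ (to force $\|w_{a_1}\|$ or $\|w_{a_q}\|$ bounded below) cannot prevent this difference from being small: the subtracted term $\phi_{b_p}\circ\cdots\circ\phi_{b_1}(0)$ is of the same order as the leading term $w_{a_1}$ in the telescoping expansion of the $a$-part, so there is no dominant summand. Concretely, if $\phi_{b_p}\circ\cdots\circ\phi_{b_1}(0)$ happens to lie in the cylinder $\phi_{a_1}(\cK)$, then for a positive-measure set of tails $a_2,a_3,\ldots$ the bracketed difference is arbitrarily small, regardless of $w_{a_1}$.

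The paper's resolution is global rather than letter-by-letter. It introduces the coding map $\eta: B \to \cK$, $\eta(a) = \lim_{j}\phi_{a_1}\circ\cdots\circ\phi_{a_j}(0)$, sets $\mu_{\cK} = \eta_*\beta$, and shows (using compactness of $\cK$ and the fact that $\cK$ is not a single point---this is where the hypothesis that $E$ is not virtually contained in $\SL_{m+n}(\R)$ actually enters) that there exist $\delta,\kappa>0$ with $\mu_{\cK}(B_\delta(x)) < 1-\kappa$ for every $x \in \cK$. One then takes
\[
Y_l(c) \;=\; \eta^{-1}\bigl(\cK \setminus B_\delta(x_{c,l})\bigr),
\]
where $x_{c,l} \in \cK$ is any point within $\rho^{L_1}M$ of $\phi_{b_p}\circ\cdots\circ\phi_{b_1}(0)$. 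This gives $\beta(Y_l(c)) > \kappa$ uniformly, and for $a \in Y_l(c)$ the bracketed difference above has norm at least $\delta - 2\rho^{L_1}M =: C_2 > 0$ by the triangle inequality. So $Y_l(c)$ depends on all of $a$ through the coding map, not on any finite cylinder.
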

\begin{proof}
    Consider the map $\eta: B \rightarrow \R^m$ defined by 
    $$
    \eta(b)= \lim_{p \rightarrow \infty} \phi_{b_1} \circ \cdots \circ \phi_{b_p}(0),
    $$
    where for each $e \in E$, $\phi_e: \R^m \rightarrow \R^m$ is defined as $\phi_e(x) = \rho_e O_e x + w_e$. Note that the map is well defined and continuous since $\max_{e \in E} \rho_e <1$.

    First of all $\cK= \eta(B)$ is not a single point, since otherwise if $\cK=\{x\}$, then $$[I_{m+n},x]^{-1} e^{-1} [I_{m+n},x] \in \SL_{m+n}(\R),$$ hence $E$ will virtually be contained in $\SL_{m+n}(\R)$. Consider the measure $\mu_\cK = \eta_* \beta$ on $\cK$. Clearly the support of the measure $\mu_\cK$ equals $\cK$ and $\cK$ is compact using the fact that $\supp(\beta) = B$ and $\eta$ is continuous.\\ 

    For $x \in \R^m$ and $s>0$, set $B_s(x)= \{y \in \R^m: \|y-x\|< s\}$. We claim that there exists a $\delta>0$ and $\kappa>0$ such that for all $x \in \cK$, we have $\mu_\cK(B_\delta(x))< 1-\kappa$. To see this, assume that $\delta$ is small enough so that $\cK$ is not contained in the closure of any ball of radius $2\delta$. Now assume by contradiction that for every $\kappa_p \rightarrow 0$, we can find $x_p$ such that $\mu_{\cK}(B_\delta(x_p))> 1- \kappa_p$. Using the fact that $\cK$ is compact and passing to a subsequence, we may assume that $x_p \rightarrow x_0$ as $p \rightarrow \infty$. Note that for all large enough $p$, we have $\mu_{\cK}(B_{2\delta}(x_0)) \geq \mu_{\cK}(B_\delta(x_p)) > 1-\kappa_p $, which implies that $\mu_{\cK}(B_{2\delta}(x_0)) = 1$. This in turn implies that $\supp(\mu_{\cK}) \subset \Bar{B_{2\delta}(x)}$. Since, $\supp(\mu_{\cK}) = \cK$. This contradicts the definition of $\delta$. Hence the claim follows.

    Let $\rho = \max_{e \in E} \rho_e$. Let $M$ be large enough so that $\cK \subset B_M(0)$ and $\phi_e(B_M(0)) \subset B_M(0)$ for all $e \in E$. Let $C_1 = -\log(\min_{e \in E}\rho_e)$. Suppose $L_1$ is chosen large enough so that $3 \rho^{L_1} M< \delta$. Choose $T$ large enough so that $ T> L_1C_1$. Let $C_3 = 2M$ and let $C_2 = \delta - 2 \rho_{L_1} M$.\\

    We claim that these constants suffice. To see this, let $c=(b,k) \in B^\tau$ and $a \in B$ be arbitrary. Then for $l \geq T$ we have 
    \begin{align*}
        &\rho_l(h_{l,c}(a))^{-1} \rho_l(c) \\
        &=  a_q \cdots a_1 b_p^{-1} \cdots b_1^{-1},
    \end{align*}
    where $p \in \N$ is given so that $p = \max\{j \in \N: k+l - \tau_j(b)>0 \}$ and $q = q_{l,c}(a)$. Note that $\tau_j(d) \leq jC_1$ for all $d \in B$, hence $p \geq L_1$ and $q \geq L_1$. Also, note that 
    \begin{align*}
        &b_p^{-1} \cdots b_1^{-1}\\
        &= \left(\begin{pmatrix}
            \rho_{b_p}O_{b_p} \\ & \rho_{b_p}^{-m/n}I_n
        \end{pmatrix}, \begin{pmatrix}
            -w_{b_p} \\ 0
        \end{pmatrix} \right) \cdots \left(\begin{pmatrix}
            \rho_{b_1}O_{b_1} \\ & \rho_{b_1}^{-m/n}I_n
        \end{pmatrix}, \begin{pmatrix}
            -w_{b_1} \\ 0
        \end{pmatrix} \right) \\
        &= \left(\begin{pmatrix}
            \rho_{b_p}\cdots \rho_{b_1} O_{b_p} \cdots O_{b_1} \\ & (\rho_{b_p}\cdots \rho_{b_1})^{-m/n}I_n
        \end{pmatrix}, \begin{pmatrix}
            -\phi_{b_p} \circ \cdots \circ \phi_{b_1}(0) \\ 0
        \end{pmatrix} \right) \\
        &= \left(I_{m+n}, \begin{pmatrix}
            -\phi_{b_p} \circ \cdots \circ \phi_{b_1}(0) \\ 0
        \end{pmatrix} \right) \left(\begin{pmatrix}
            \rho_{b_p}\cdots \rho_{b_1} O_{b_p} \cdots O_{b_1} \\ & (\rho_{b_p}\cdots \rho_{b_1})^{-m/n}I_n
        \end{pmatrix}, 0 \right).
    \end{align*}
    
    Similarly, we have
    \begin{align*}
        &a_q \cdots a_1\\
        &= (a_1^{-1} \cdots a_q^{-1})^{-1} \\
        &=  \left(\begin{pmatrix}
            \rho_{a_1}\cdots \rho_{a_q} O_{a_0} \cdots O_{a_q} \\ & (\rho_{a_0}\cdots \rho_{a_q})^{-m/n}I_n
        \end{pmatrix}, \begin{pmatrix}
            -\phi_{a_1} \circ \cdots \circ \phi_{a_q}(0) \\ 0
        \end{pmatrix} \right)^{-1} \\
        &= \left(\begin{pmatrix}
            (\rho_{a_q}\cdots \rho_{a_1})^{-1} O_{a_q}^{-1} \cdots O_{a_1}^{-1} \\ & (\rho_{a_q}\cdots \rho_{a_1})^{m/n} I_n
        \end{pmatrix}, 0 \right) \left(I_{m+n}, \begin{pmatrix}
            \phi_{a_0} \circ \cdots \circ \phi_{a_q}(0) \\ 0
        \end{pmatrix} \right).
    \end{align*}
    Thus, we have
    \begin{align*}
        &\rho_l(h_{l,c}(a))^{-1} \rho_l(c) = [A_{l,c,a}, v_{l,c,a}],
    \end{align*}
    where
    $$
    A_{l,c,a} = \begin{pmatrix}
            (\rho_{a_q}\cdots \rho_{a_1})^{-1} \rho_{b_p}\cdots \rho_{b_1}  O_{a_q}^{-1} \cdots O_{a_1}^{-1} O_{b_p} \cdots O_{b_1}  \\ & (\rho_{a_q}\cdots \rho_{a_1}(\rho_{b_p}\cdots \rho_{b_1})^{-1} )^{m/n}I_n
        \end{pmatrix}
    $$
    and
    $$
    v_{l,c,a} = (\rho_{a_q}\cdots \rho_{a_1})^{-1} O_{a_q}^{-1} \cdots O_{a_1}^{-1} \left(\phi_{a_0} \circ \cdots \circ \phi_{a_q}(0) -\phi_{b_p} \circ \cdots \circ \phi_{b_1}(0) \right).
    $$
    Note that by definition of $p$, we have that 
    $$
    e^{-l-C_1} \leq e^{-k-l} \leq \rho_{b_p}\cdots \rho_{b_1}= e^{-\tau_p(b))} \leq e^{C_1 - k-l } \leq e^{C_1-l}.
    $$
    Also, by definition of $q$, we have that
    $$
    e^{l-C_1} \leq e^{l-k} \leq (\rho_{a_q}\cdots \rho_{a_1})^{-1}= e^{\tau_q(a[q]b))} \leq e^{C_1 +l-k } \leq e^{l+ C_1}.
    $$
    Thus, $A_{l,c,a}$ satisfies the conditions of the theorem. Similarly, we have that $\|v_{l,c,a}\| \leq e^{C_1}C_3$.

    Let $x_{c,l} \in \cK$ be arbitrary such that $\|\phi_{b_p} \circ \cdots \circ \phi_{b_1}(0)- x_{c,l}\| \leq \rho^{L_1}.M$. Note that such an element exists because for any $d \in B$ with $d_i = b_{p+1-i}$ for $1 \leq i \leq p$, we have 
    $$
    \|\eta(d)- \phi_{b_p} \circ \cdots \circ \phi_{b_1}(0)\| \leq \rho^{p} \| \eta(T^p(d))\| \leq \rho^p M.
    $$
    Then, let $Y_l(c)= \eta^{-1}(\cK \setminus B_{\delta}(x_{c,l}))$. Clearly $\beta(b)> \kappa$. Let $a \in Y_l(c)$ be arbitrary. Then, we have 
    \begin{align*}
        &\|\phi_{a_0} \circ \cdots \circ \phi_{a_q}(0) -\phi_{b_p} \circ \cdots \circ \phi_{b_1}(0)\| \\
        &\geq  -\|\phi_{a_0} \circ \cdots \circ \phi_{a_q}(0) - \eta(a)\| + \|\eta(a)- x_{c,l}\| - \|x_{c,l}- \phi_{b_p} \circ \cdots \circ \phi_{b_1}(0)\| \\
        &\geq \delta- 2 \rho^{L_1}.M \geq C_2.
    \end{align*}
    Thus, inequality that $\|v_{l,c,a}\|> e^{l-C_1}C_2$ holds for all $a \in Y_l(c)$. This proves the lemma.
\end{proof}

\section{Conditional Measures}
\label{Conditional Measures}
We follow \cite{BQ1} closely in this section and are indebted to the translation \cite{benoistquinttranslation} by Barak Weiss. 
Let $R$ be a locally compact separable metrizable group and $(Z, \ZZ)$ a standard Borel space with a Borel action of $R$. Let $\lambda$ be a Borel probability measure on $Z$. Assume that the stabilizer subgroups for the action of $R$ on $Z$ are discrete. 

Let $\MM(R)$ denote the space of positive nonzero Radon measures on
$R$ and let $\MM_1(R) = \MM(R) /\simeq$ be the space of such measures
up to scaling as in \cite{BQ1}.

A Borel subset $\Sigma \subset Z$ is called a discrete
  section of the action of $R$ if, for any $z \in Z$, the set of
visit times $\{r \in R: rz \in \Sigma\}$ is discrete and closed in
$R$. By \cite{Kechris}, we know that there is a discrete
section $\Sigma$ for the action of $R$ such that $R\Sigma = Z$. 

Choose a discrete section $\Sigma$ for the action of $Z$ on $R$ and
denote $a: R \times \Sigma \to Z, (r,z) \mapsto rz$. For any positive Borel
function $f$ on $R \times \Sigma$, the following measure
$a^*\lambda$ on $R \times \Sigma$ 
\begin{equation}
    \label{eq: 4.1}
    a^*\lambda(f) = \int_Z \left(\sum_{(r,z') \in a^{-1}(z)} f(r,z')
\right) d\lambda(z),
\end{equation}
defines a $\sigma$-finite Borel measure on $R \times \Sigma$. 

Denote by $\pi_\Sigma : R\times \Sigma \to \Sigma$ the projection on
the second factor, and by $\lambda_\Sigma$ the image under
$\pi_\Sigma$ of a finite measure on $R \times \Sigma$ equivalent to
$a^*\lambda$. We therefore have, for any positive Borel function on $R
\times Z$, 
\begin{equation}
\label{eq: 4.2}
a^*\lambda(f) = \int_\Sigma \int_R f(r,z) d\sigma_\Sigma(z)(r)
d\lambda_\Sigma(z). 
\end{equation}

We denote by $t_r$ the right translation by an element $r \in R$. 

\begin{lem}[{\cite[Lem.~4.1]{BQ1}}] \label{lem: 4.1}
Let $\Sigma$ be a discrete section for the action of $R$ on $Z$. For
$\lambda_\Sigma$-a.e. $z \in \Sigma$, for all $r \in R$ such that $rz
\in \Sigma$, we have
$$
\sigma_\Sigma(z) \simeq t_{r*} \sigma_\Sigma(rz).
$$
\end{lem}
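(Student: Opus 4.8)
The plan is to exploit a \emph{holonomy invariance} of the $\sigma$-finite measure $a^*\lambda$ on $R\times\Sigma$: transporting a visit time from one section point to another section point in the same orbit is realized by a right translation in $R$, and this transport preserves $a^*\lambda$. Combined with the essential uniqueness of the disintegration \eqref{eq: 4.2}, this will force the asserted transformation rule on the conditional measures $\sigma_\Sigma(z)$.

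First I would isolate the combinatorial identity behind the lemma. Let $A,B\subset\Sigma$ be Borel, let $\phi\colon A\to R$ be Borel, and suppose $\theta\colon A\to B$, $\theta(z')=\phi(z')\,z'$, is a Borel isomorphism. Consider the Borel isomorphism $\Psi\colon R\times A\to R\times B$, $\Psi(r,z')=(r\,\phi(z')^{-1},\,\theta(z'))$. If $rz'=z$ then $\big(r\,\phi(z')^{-1}\big)\,\theta(z')=rz'=z$, so for each $z\in Z$ the map $\Psi$ carries $a^{-1}(z)\cap(R\times A)$ bijectively onto $a^{-1}(z)\cap(R\times B)$. Feeding this into the definition \eqref{eq: 4.1} of $a^*\lambda$ and summing over fibers yields $\Psi_*\big(a^*\lambda|_{R\times A}\big)=a^*\lambda|_{R\times B}$. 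Since $\Psi$ intertwines the projections to $\Sigma$ via $\theta$ and acts on the fiber over $z'$ by the right translation $t_{\phi(z')^{-1}}$, comparing with \eqref{eq: 4.2} and using essential uniqueness of the disintegration (all equalities being up to scaling, which is all $\sigma_\Sigma$ is defined to) gives, for $\lambda_\Sigma$-a.e.\ $z'\in A$,
\[
\big(t_{\phi(z')^{-1}}\big)_*\,\sigma_\Sigma(z')\ \simeq\ \sigma_\Sigma\big(\theta(z')\big),
\]
equivalently, writing $r=\phi(z')$ so that $\theta(z')=rz'$, one gets $\sigma_\Sigma(z')\simeq t_{r*}\,\sigma_\Sigma(rz')$.

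Next I would supply the descriptive–set–theoretic reduction producing enough maps $\phi$ to capture \emph{all} admissible $r$. Since $\Sigma$ is a discrete section and $R$ is second countable, each visit set $\{r\in R:rz\in\Sigma\}$ is countable, hence $\Sigma\cap Rz$ is countable and the orbit relation $E_\Sigma=\{(z,z')\in\Sigma^2:z'\in Rz\}$ is a countable Borel equivalence relation on $\Sigma$; by Lusin--Novikov \cite{Kechris} it is a countable union of graphs of Borel partial isomorphisms $\psi_j\colon A_j\to B_j$ of $\Sigma$. Likewise $\{(z,r)\in\Sigma\times R:rz=z\}$ has countable (discrete) sections, so it is covered by graphs of countably many Borel maps $\chi_k$ with $\chi_k(z)\in\mathrm{Stab}(z)$, and Borel uniformization furnishes, for each $j$, a Borel $r_j\colon A_j\to R$ with $r_j(z')\,z'=\psi_j(z')$. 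Put $\phi_{j,k}=r_j\chi_k$ on $A_j$. Because $\chi_k(z')\in\mathrm{Stab}(z')$ we have $\phi_{j,k}(z')\,z'=r_j(z')\,z'=\psi_j(z')$, so the associated base map is the Borel isomorphism $\psi_j$ regardless of $k$; and if $rz'\in\Sigma$ then $(z',rz')\in\mathrm{graph}(\psi_j)$ for some $j$, whence $r_j(z')^{-1}r\in\mathrm{Stab}(z')$ equals some $\chi_k(z')$ and $r=\phi_{j,k}(z')$.

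Finally, applying the second paragraph to each $(\theta,\phi)=(\psi_j,\phi_{j,k})$ gives, for every $(j,k)$, a $\lambda_\Sigma$-conull subset of $A_j$ on which $\sigma_\Sigma(z')\simeq t_{\phi_{j,k}(z')*}\,\sigma_\Sigma\big(\phi_{j,k}(z')\,z'\big)$. Intersecting these countably many conull sets produces a single $\lambda_\Sigma$-conull set of $z\in\Sigma$ on which $\sigma_\Sigma(z)\simeq t_{r*}\,\sigma_\Sigma(rz)$ holds simultaneously for every $r$ with $rz\in\Sigma$, which is the lemma. The step I expect to be the genuine obstacle is the bookkeeping in the third paragraph — exhausting all admissible group elements, including those in the stabilizer, by countably many Borel selectors while keeping the associated base maps Borel isomorphisms of $\Sigma$; the fiberwise change of variables and the appeal to uniqueness of disintegration are then routine.
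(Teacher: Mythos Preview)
The paper does not supply its own proof of this lemma; it is simply quoted from \cite[Lem.~4.1]{BQ1}. Your argument is correct and is essentially the standard one: the key identity $\Psi_*(a^*\lambda|_{R\times A})=a^*\lambda|_{R\times B}$ for the fiber-preserving map $\Psi(r,z')=(r\phi(z')^{-1},\theta(z'))$ is exactly the holonomy invariance used in \cite{BQ1}, and the reduction to countably many Borel selectors is the natural way to pass from ``a.e.\ $z$ for a fixed $\phi$'' to ``a.e.\ $z$ for all admissible $r$''. One small remark on attribution: the statement that the countable Borel equivalence relation $E_\Sigma$ decomposes into graphs of Borel partial \emph{isomorphisms} is really Feldman--Moore (or a refinement of Lusin--Novikov using that sections are countable in both coordinates), not Lusin--Novikov alone; but this does not affect the validity of the argument.
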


\begin{prop}[{\cite[Prop.~4.2]{BQ1}}]\label{prop: 4.2}
Consider a Borel action with discrete stabilizers of a locally compact
separable metrizable group $R$ on a standard Borel space $(Z, \ZZ)$. 

Then there is a Borel map $\sigma:  Z \to \MM_1(R)$ and a Borel
subset $E \subset Z$ such that $\lambda(Z \sm E) =0$ and such that,
for any discrete section $\Sigma \subset Z$ for the action of $R$, for
$\lambda_\Sigma$-a.e. $z_0 \in \Sigma$, for every $r \in R$ such that
$rz_0 \in E$, 
$$
\sigma(z_0) \simeq t_{r*} \sigma_\Sigma(rz_0).
$$
This map $\sigma$ is unique up to a set of $\lambda$-measure zero. 

For every $r \in R$ and every $z \in E$ such that $rz \in E$, we have 
\begin{equation}
\label{eq: 4.3}
\sigma(z) \simeq t_{r*}(\sigma(rz)).
\end{equation}

\end{prop}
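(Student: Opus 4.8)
The plan is to build $\sigma$ by gluing together the family of section measures $\sigma_\Sigma$ produced by Lemma~\ref{lem: 4.1}. The essential feature of that lemma is that, for a \emph{fixed} section $\Sigma$, it asserts the cocycle relation $\sigma_\Sigma(z)\simeq t_{r*}\sigma_\Sigma(rz)$ for \emph{all} return times $r$, with only the base point $z$ confined to a $\lambda_\Sigma$-conull set. This is precisely what will allow the equivariance \eqref{eq: 4.3} to hold for every pair $z,rz\in E$ rather than merely $\lambda$-almost everywhere.

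First I would fix, using \cite{Kechris}, a discrete section $\Sigma_0$ with $R\Sigma_0=Z$, and let $\Sigma_0^{\circ}\subseteq\Sigma_0$ be the $\lambda_{\Sigma_0}$-conull Borel set on which the conclusion of Lemma~\ref{lem: 4.1} holds. Set $E:=R\Sigma_0^{\circ}$; this is Borel (the $R$-saturation of a Borel subset of a discrete section, whose visit-time sets are countable), and $\lambda(Z\sm E)=0$ because, by the disintegration formula \eqref{eq: 4.2}, a $\lambda_{\Sigma_0}$-null subset of $\Sigma_0$ has $R$-saturation of $\lambda$-measure zero, while $Z\sm E\subseteq R(\Sigma_0\sm\Sigma_0^{\circ})$. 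For $z\in E$ choose any $r$ with $rz\in\Sigma_0^{\circ}$ and set $\sigma(z):=t_{r*}\sigma_{\Sigma_0}(rz)$. If $rz$ and $r'z$ both lie in $\Sigma_0^{\circ}$, applying Lemma~\ref{lem: 4.1} at the base point $rz$ with return time $r'r^{-1}$ (legitimate since $r'z\in\Sigma_0$) shows the two candidates coincide, so $\sigma$ is well defined; a measurable selection of such an $r$ makes $\sigma$ Borel. Finally, for any $z,rz\in E$ pick $s,s'$ with $sz,s'rz\in\Sigma_0^{\circ}$; Lemma~\ref{lem: 4.1} at $sz$ with return time $s'rs^{-1}$ gives $\sigma(z)=t_{s*}\sigma_{\Sigma_0}(sz)\simeq t_{(s'r)*}\sigma_{\Sigma_0}(s'rz)=t_{r*}\sigma(rz)$, which is \eqref{eq: 4.3}.

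It remains to check compatibility with an arbitrary discrete section $\Sigma$ and uniqueness. For the former, observe that $\Sigma\cup\Sigma_0$ is again a discrete section (its visit-time set is a union of two discrete closed subsets of $R$); unwinding the defining relations \eqref{eq: 4.1}--\eqref{eq: 4.2} for the sub-sections $\Sigma$ and $\Sigma_0$ of this common refinement shows that, for $\lambda_\Sigma$-a.e.\ $z_0\in\Sigma$, the class $\sigma_\Sigma(z_0)$ agrees with the one obtained by transporting $\sigma$ along the orbit, i.e.\ $\sigma_\Sigma(z_0)\simeq t_{r*}\sigma(rz_0)$ for every $r$ with $rz_0\in E$ (all such $r$ giving the same value by \eqref{eq: 4.3}). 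Uniqueness follows at once: another map $\sigma'$ with this property, specialized to the section $\Sigma_0$, satisfies $\sigma'(z)\simeq\sigma_{\Sigma_0}(z)\simeq\sigma(z)$ for $\lambda_{\Sigma_0}$-a.e.\ $z\in\Sigma_0$, and \eqref{eq: 4.3} for both maps then forces $\sigma'=\sigma$ on a $\lambda$-conull set. I expect the main obstacle to be exactly this section-independence step: one must squeeze out of the bare definitions \eqref{eq: 4.1}--\eqref{eq: 4.2} that refining a discrete section changes the transverse measures only by scalars and translations, so that the measures recorded by two unrelated sections are forced to coincide on a common super-section. The remaining points---Borel measurability of $E$ and $\sigma$, the Fubini-type argument for $\lambda(Z\sm E)=0$, and the formal verification of \eqref{eq: 4.3}---are routine.
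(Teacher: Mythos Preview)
The paper does not supply its own proof of this proposition; it is quoted verbatim from \cite[Prop.~4.2]{BQ1} (the surrounding section, as the authors explain, ``borrows significantly'' from Benoist--Quint and \cite{benoistquinttranslation}). Your sketch is the standard Benoist--Quint construction: fix one global discrete section $\Sigma_0$, pass to the $\lambda_{\Sigma_0}$-conull set $\Sigma_0^\circ$ furnished by Lemma~\ref{lem: 4.1}, set $E=R\Sigma_0^\circ$ and define $\sigma$ by transporting $\sigma_{\Sigma_0}$ along orbits, then check compatibility with an arbitrary $\Sigma$ via the common discrete section $\Sigma\cup\Sigma_0$. The argument is correct, and the points you single out as delicate (Borel measurability of $E$ via countable-to-one images, and the section-refinement comparison needed for compatibility) are exactly the ones that require care in \cite{BQ1}. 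One small remark: as stated in the paper the displayed compatibility relation reads $\sigma(z_0)\simeq t_{r*}\sigma_\Sigma(rz_0)$, whereas your argument (and the natural reading of the construction) yields $\sigma(rz_0)\simeq t_{r*}\sigma_\Sigma(z_0)$; this is almost certainly a transcription slip in the present paper rather than an error in your reasoning.
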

The measure $\sigma(z)$ is called the conditional measure of $z$ along
the action of $R$. 

\begin{rem}
   Suppose $f: \R^{m+n} \rightarrow \R^{m+n}$ is a measurable map and suppose $\sigma \in \MM_1(\R^{m+n})$. Then, the notation $f_* \sigma \simeq \sigma$ means that there exists $c_f >0$ such that any $\widetilde \sigma \in \MM(\R^{m+n})$ whose image in $\MM_1(\R^{m+n})$ equals $\sigma$ satisfies $f_* \widetilde \sigma = c_f \sigma$. The notation $f_* \sigma = \sigma$ means that any $\widetilde \sigma \in \MM(\R^{m+n})$ whose image in $\MM_1(\R^{m+n})$ equals $\sigma$ satisfy $f_* \widetilde \sigma = \sigma$.
\end{rem}

In the following proposition from \cite{BQ1}, $\Gr(\R^d)$  denote the Grassmannian variety of $\R^d$. 

\begin{prop}[{\cite[Prop.~4.3]{BQ1}}]\label{prop: 4.3}
Let $(Z, \ZZ)$ be a standard Borel space endowed with a Borel action
of $\R^d$ with discrete stabilizers, and let $\lambda$ be a Borel
probability measure on $Z$. For $\lambda$-a.e. $z \in Z$, we denote by
$\sigma(z)$ the conditional measure of $z$ for the action of $\R^d$,
and 
$$
V_z = \{r \in \R^d: t_{r*}\sigma(z) = \sigma(z)\}_0,
$$
and by
$$
\lambda = \int_Z \lambda_z d\lambda(z)
$$
the distintegration of $\lambda$ along the map $Z \to \Gr(\R^d), \, z
\mapsto V_z$. Then for $\lambda$-a.e. $z \in Z$, the probability
measure $\lambda_z$ is $V_z$-invariant. 
\end{prop}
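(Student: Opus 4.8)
The plan is to follow the strategy of \cite[Prop.~4.3]{BQ1}, using the discrete-section description of $\lambda$ from Section~\ref{Conditional Measures} together with the finiteness of $\lambda$. First, the map $q : Z \to \Gr(\R^d)$, $z \mapsto V_z$, is Borel (``measure class $\mapsto$ identity component of its stabilizer in $\R^d$'' is Borel, after decomposing $\Gr(\R^d)$ by dimension and using measurable selection) and $\R^d$-invariant: as $\R^d$ is abelian, $t_{v*}$ commutes with every $t_{r*}$, so \eqref{eq: 4.3} gives $\mathrm{Stab}\,\sigma(rz) = \mathrm{Stab}(t_{-r*}\sigma(z)) = \mathrm{Stab}\,\sigma(z)$, hence $V_{rz} = V_z$ for $\lambda$-a.e.\ $z$; thus each fibre of $q$ is $\lambda$-mod-$0$ $\R^d$-invariant. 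Fix, Borel-measurably in $W \in \Gr(\R^d)$, a dense sequence $(w_i(W))_i$ in $W$. Since $v \mapsto t_{v*}\lambda_z$ is weak-$*$ continuous (we may take the action continuous) and $\{v : t_{v*}\lambda_z = \lambda_z\}$ is a closed subgroup of $\R^d$, it is enough to prove, for each fixed $i$, that $t_{w_i(V_z)*}\lambda_z = \lambda_z$ for $\lambda$-a.e.\ $z$. Writing $\psi := w_i$, $\widetilde{\psi}(z) := \psi(V_z) \in V_z$ (Borel, constant along $\R^d$-orbits) and $S(z) := \widetilde{\psi}(z)\,z$, the map $S$ is a Borel automorphism of $Z$ with inverse $z \mapsto (-\widetilde{\psi}(z))z$ preserving every fibre of $q$, and $S_*\lambda = \int_{\Gr} t_{\psi(W)*}\lambda_W\, d(q_*\lambda)(W)$; so what must be shown is $S_*\lambda = \lambda$.

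\textbf{Discrete-section computation.} Fix a discrete section $\Sigma$ with $R\Sigma = Z$, so that $\lambda$ is encoded via \eqref{eq: 4.2} by $(\lambda_\Sigma,\, z' \mapsto \sigma_\Sigma(z'))$ with $\sigma_\Sigma \simeq \sigma$. Using orbit-invariance of $\widetilde{\psi}$, a direct computation gives $a^*(S_*\lambda) = \Theta_*(a^*\lambda)$ with $\Theta(r,z') = (r + \widetilde{\psi}(z'),\, z')$; disintegrating over $\Sigma$, the base measure $\lambda_\Sigma$ is unchanged and the conditional at $z'$ becomes $t_{\widetilde{\psi}(z')*}\sigma_\Sigma(z')$. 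Since $\widetilde{\psi}(z') \in V_{z'}$, this equals $\chi_{z'}(\widetilde{\psi}(z'))\,\sigma_\Sigma(z')$ for a scalar $\chi_{z'}(\widetilde{\psi}(z')) > 0$, where $r \mapsto \chi_{z'}(r)$ is a continuous homomorphism $V_{z'} \to \R_{>0}$ arising from the definition of $V_{z'}$. Hence $S_*\lambda$ is encoded by $(\lambda_\Sigma,\, z' \mapsto \chi_{z'}(\widetilde{\psi}(z'))\,\sigma_\Sigma(z'))$, and $S_*\lambda = \lambda$ reduces to showing $\chi_{z'}(\widetilde{\psi}(z')) = 1$ for $\lambda$-a.e.\ $z'$.

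\textbf{Killing the scalar via finiteness.} Iterating, the conditional of $S^n_*\lambda$ at $z'$ is $\chi_{z'}(\widetilde{\psi}(z'))^n\,\sigma_\Sigma(z')$ for every $n \in \Z$ (using $S^n(z) = (n\widetilde{\psi}(z))z$ and that $\chi_{z'}$ is a homomorphism), with base $\lambda_\Sigma$ unchanged. Choose a nonnegative fundamental-domain function $f_0$ on $R \times \Sigma$, i.e.\ $\sum_{(r,z') \in a^{-1}(z)} f_0(r,z') = 1$, with $f_0 > 0$ on $a^{-1}(Z)$, and set $g_0(z') = \int_R f_0(r,z')\, d\sigma_\Sigma(z')(r)$, so $g_0 > 0$ for $\lambda_\Sigma$-a.e.\ $z'$ and $\int_\Sigma g_0\, d\lambda_\Sigma = \lambda(Z) = 1$. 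As each $S^n$ is a Borel automorphism, $S^n_*\lambda$ is again a probability measure, so \eqref{eq: 4.2} gives $\int_\Sigma \chi_{z'}(\widetilde{\psi}(z'))^n\, g_0(z')\, d\lambda_\Sigma(z') = 1$ for all $n \in \Z$. Pushing $g_0\,\lambda_\Sigma$ forward under $z' \mapsto \chi_{z'}(\widetilde{\psi}(z')) \in (0,\infty)$ to a probability measure $\eta$ on $(0,\infty)$, this reads $\int_0^\infty t^n\, d\eta(t) = 1$ for all $n \in \Z$; comparing $n = 1,0,-1$ yields $\int (\sqrt t - 1/\sqrt t)^2\, d\eta = 0$, so $\eta = \delta_1$ and $\chi_{z'}(\widetilde{\psi}(z')) = 1$ for $\lambda$-a.e.\ $z'$. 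Therefore $S_*\lambda = \lambda$; as this holds for every $i$, for $\lambda$-a.e.\ $z$ the closed subgroup $\{v : t_{v*}\lambda_z = \lambda_z\}$ contains the dense subset $\{w_i(V_z)\}_i$ of $V_z$, hence contains $V_z$, which is the claim.

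\textbf{Main obstacle.} The crux is that conditional measures are only defined up to scaling, so $S$ a priori alters $\lambda$ by the Radon--Nikodym cocycle $z' \mapsto \chi_{z'}(\widetilde{\psi}(z'))$ of the conditional measures; showing this cocycle is trivial is the heart of the argument, and it is exactly here that finiteness of $\lambda$ is used (a two-sided moment sequence of a probability measure on $(0,\infty)$ is constant only for $\delta_1$). Making this rigorous requires matching the discrete-section encodings of $\lambda$ and $S_*\lambda$ precisely --- in particular checking that $\Theta$ leaves the base $\lambda_\Sigma$ untouched and merely rescales the conditionals --- and the measurable-selection inputs (a Borel dense sequence in each $V_z$, a strictly positive fundamental-domain weight $f_0$) must be supplied with care.
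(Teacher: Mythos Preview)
The paper does not supply its own proof of this proposition; it is quoted verbatim as \cite[Prop.~4.3]{BQ1} and used as a black box. Your proposal is therefore not competing with any argument in the present paper, but is a reconstruction of the Benoist--Quint proof.

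Your reconstruction is essentially correct and follows the expected line: orbit-invariance of $z\mapsto V_z$ (abelianness of $\R^d$), reducing $V_z$-invariance of $\lambda_z$ to $S_*\lambda=\lambda$ for the fiberwise shift $S(z)=\widetilde\psi(z)\,z$, computing $a^*(S_*\lambda)=\Theta_*(a^*\lambda)$ via the discrete-section formalism, and finally killing the projective cocycle $\chi_{z'}(\widetilde\psi(z'))$ by the finite-mass moment trick $\int t^n\,d\eta=1$ for all $n\in\Z\Rightarrow\eta=\delta_1$. The parenthetical ``we may take the action continuous'' is justified by the Becker--Kechris realization of a Borel action of a Polish group as a continuous action on a (retopologized) Polish space, which is what makes the stabilizer of $\lambda_z$ closed and allows you to pass from the countable dense set $\{w_i(V_z)\}$ to all of $V_z$; it would be worth saying this explicitly rather than leaving it as an aside, since it is the only place closedness of the stabilizer is needed and it is not completely trivial. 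Other than that and the routine measurable-selection bookkeeping you flag yourself, the argument is sound.
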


\section{Horocycle Flows}
\label{Horocycle Flows}

Following Benoist and Quint \cite{BQ1}, we define an action of $\R^{m+n}$ which plays a role analogous to that of the horocycle flow.  
\begin{defn}
    We define the horocycle flow as the action $\Phi$ of $\R^{m +n}$ given by, for any $v \in \R^{m +n}$ and $(b,k, x) \in B^{\tau, \X}$ as
    \begin{align}
        \label{def:action R}
\Phi_v(b,k,x)= (b, \left[I_{m+n},  a_{k} v \right]x).
    \end{align}
\end{defn}

We then have the following Lemma, see Lemma 6.10 in \cite{BQ1}. 

\begin{lem}
\label{lem: Equivalence of Action}
    For any $v \in \R^{m+n}$ and any $l \in \N$, we have for any $(c,x) \in B^{\tau, \X},$
    \begin{align}
    \label{eq:Action equivalence}
        T^{\tau,\X}_l \circ \Phi_v(c,x) = \Phi_{a_{-l} v} \circ T_l^{\tau,\X}(c,x).
    \end{align}
\end{lem}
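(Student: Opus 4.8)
The goal is to verify the intertwining relation
\[
T^{\tau,\X}_l \circ \Phi_v(c,x) = \Phi_{a_{-l}v} \circ T^{\tau,\X}_l(c,x)
\]
for $c=(b,k)$ and $l \geq 0$. The plan is to expand both sides using the definitions of $T^{\tau,\X}_l$, $\Phi_v$, and $\rho_l(c)$, and to check that the two resulting elements of $B^{\tau,\X}$ coincide in both coordinates. The first coordinate is essentially trivial: $\Phi_v$ acts as the identity on the $B^\tau$-factor, so on the left-hand side the first coordinate is $T^\tau_l(c)$, and on the right-hand side it is also $T^\tau_l(c)$; moreover $p_l(c)$, hence the index $p_l(b,k)$ appearing in $\rho_l$, depends only on $c$ and not on $x$, so the ``branch'' used is the same on both sides.

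For the second coordinate, write $c'=T^\tau_l(c)=(T^{p}b,\,k+l-\tau_{p}(b))$ where $p=p_l(c)$, and let $k'=k+l-\tau_p(b)$ denote its $\R$-component. I would compute:
\begin{align*}
T^{\tau,\X}_l \circ \Phi_v(c,x) &= T^{\tau,\X}_l\big(b,k,[I_{m+n},a_k v]x\big) = \big(c',\, \rho_l(c)\,[I_{m+n},a_k v]\,x\big),\\
\Phi_{a_{-l}v}\circ T^{\tau,\X}_l(c,x) &= \Phi_{a_{-l}v}\big(c',\,\rho_l(c)x\big) = \big(c',\,[I_{m+n},a_{k'} a_{-l} v]\,\rho_l(c)\,x\big).
\end{align*}
So the identity reduces to the algebraic claim
\[
\rho_l(c)\,[I_{m+n},a_k v] = [I_{m+n},\,a_{k'-l} v]\,\rho_l(c)
\]
as elements acting on $\X$, i.e. $\rho_l(c)\,[I_{m+n},a_k v]\,\rho_l(c)^{-1} = [I_{m+n},\,a_{k+l-\tau_p(b)-l}v] = [I_{m+n},a_{k-\tau_p(b)}v]$. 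Now $\rho_l(c) = b_{p-1}^{-1}\cdots b_0^{-1}$, and conjugation of a pure translation $[I_{m+n},w]$ by any $g=(h,u)\in G$ gives $[I_{m+n}, h^{-1}w]$ (up to the conventions for the semidirect product), so this is just the statement that the linear parts of $b_{p-1}^{-1}\cdots b_0^{-1}$ send $a_k v$ to $a_{k-\tau_p(b)}v$. Using the explicit form \eqref{eq: def e} of $e^{-1}$, the linear part of $b_j^{-1}$ is $\mathrm{diag}(\rho_{b_j}O_{b_j},\rho_{b_j}^{-m/n}I_n)$, whose product over $j=0,\dots,p-1$ is $\mathrm{diag}(\rho\,O,\rho^{-m/n}I_n)$ with $\rho = \rho_{b_0}\cdots\rho_{b_{p-1}} = e^{-\tau_p(b)}$ and $O\in O(m)$; since this commutes with $a_t$ up to the $O(m)$-factor and $a_t$ itself is diagonal, one gets exactly the shift $a_k \mapsto e^{-\tau_p(b)}a_k = a_{k-\tau_p(b)}$ on the relevant scaling, with the orthogonal factor absorbed harmlessly because $a_t$ commutes with $O(m)$ blocks.

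The only genuinely delicate point is bookkeeping with the semidirect product conventions and the precise index ($p_l(c)$ versus $p_l(c)-1$ in the definition of $\rho_l$) so that the power of $a$ lands exactly at $k-\tau_p(b)$; this is routine but must be done carefully, matching the definition $\rho_l(c)x = b^{-1}_{p_l(b,k)-1}\cdots b_0^{-1}x$ against the roof increment $\tau_p(b)$. I expect no real obstacle beyond this careful matching of conventions; once the conjugation identity for translations and the product formula for the diagonal linear parts are in hand, the two sides agree coordinatewise and the lemma follows. This is exactly the analogue of Lemma 6.10 in \cite{BQ1}, and the argument there transfers verbatim after substituting our specific one-parameter group $a_t$ and the form \eqref{eq: def e}.
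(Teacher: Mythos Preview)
Your direct computation is organizationally the same as the paper's proof, which factors $T_l^{\tau,\X}$ as $S^p\circ\widetilde T_l^{\tau,\X}$ (a single shift step $S$ and a pure time translation $\widetilde T_l^{\tau,\X}$) and checks the intertwining with $\widetilde\Phi_v$ for each factor separately; the algebraic core in both approaches is the conjugation identity $\rho_l(c)\,[I_{m+n},a_kv]\,\rho_l(c)^{-1}=[I_{m+n},a_{k-\tau_p(b)}v]$. One small slip: with the semidirect-product convention used in the paper one has $g[I_{m+n},w]g^{-1}=[I_{m+n},hw]$ (not $h^{-1}w$) for $g=(h,u)$; you then proceed in the correct direction anyway, so this is only a typo.

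The substantive gap is the claim that the orthogonal factor ``is absorbed harmlessly because $a_t$ commutes with $O(m)$ blocks.'' Writing the linear part of $\rho_l(c)$ as $\mathrm{diag}(\rho O,\rho^{-m/n}I_n)$ with $\rho=e^{-\tau_p(b)}$ and $O\in O(m)$, one gets
\[
\mathrm{diag}(\rho O,\rho^{-m/n}I_n)\,a_kv \;=\; a_{k-\tau_p(b)}\,\mathrm{diag}(O,I_n)\,v,
\]
and this equals $a_{k-\tau_p(b)}v$ only when $O$ fixes the first $m$ coordinates of $v$; commutation with $a_t$ lets you slide $\mathrm{diag}(O,I_n)$ past $a_t$ but does not make it disappear. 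Thus for general $v\in\R^{m+n}$ and $O_e\neq I_m$ the identity \eqref{eq:Action equivalence} as written does not hold. The paper's own proof makes exactly the same silent step at the line $b_0^{-1}[I_{m+n},a_kv]x=[I_{m+n},a_{k-\tau(b)}v]b_0^{-1}x$; the honest intertwiner is $a_{-l}$ composed with a $b$-dependent element of $O(m)\times\{I_n\}$, and either the definition of $\Phi_v$ should carry this orthogonal cocycle (as in the original Benoist--Quint construction) or one must restrict to $O_e=I_m$. Your assertion that the argument of \cite{BQ1} ``transfers verbatim'' is therefore not accurate at precisely this point.
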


\begin{proof}
Let $S$ denote the transformation of $B \times \R  \times X$
given by 
$$
S(b,k,x) = (Tb, k-\tau(b), b_0^{-1}x).
$$
So $S$ sends $B^{\tau, X}$ to points outside $B \times \R_+
 \times X$. We define the flow $\widetilde T_l^{\tau, X}$ on $B \times \R
 \times X$ by 
$$
\widetilde T_l^{\tau, X}(b,k,x) = (b, k+l, x).
$$
The flow $T_l^{\tau, X}$ is given, for $l \geq 0$ and $(b,k,x)
\in B^{\tau, X}$, by 
$$
T_l^{\tau, X} (b,k,x) = (S^p \circ \widetilde T_l^{\tau, X})(b,k,x)
$$
where $p \geq 0$ is the unique integer for which $ (S^p \circ \widetilde T_l^{\tau, X})(b,k,x) \in B^{\tau, X}$. Now define an action $\widetilde \Phi$ of $\R^{m+n}$ on $B
\times \R  \times X$ by:
$$
\widetilde \Phi_v(b,k,x) = (b,k, [I_{m+n},a_{k}(v)].x).
$$

Note that for $\beta$-a.e. $b \in B$ and every $x \in \X$, $k \in \R$ we have
\begin{align*}
    S\circ \widetilde \Phi_v (b, k, x) &= S( b, k, [I_{m+n}, a_{k} v] x) \\
    &= (T(b), k - \tau(b) , b_0^{-1}[I_{m+n}, a_{k} v] x ) \\
    &= (T(b), k- \tau(b), [I_{m+n}, a_{k- \tau(b)} v]  b_0^{-1}x ) \\
    &= \widetilde  \Phi_v (T(b), k, b_0^{-1}x) \\
    &= \widetilde  \Phi_v \circ S (b, k, x).
\end{align*}
Thus, 
\begin{align}
\label{eq: x1}
    S \circ \widetilde  \Phi_v = \widetilde  \Phi \circ S.
\end{align}
Also, for $\beta$-a.e. $b \in B$ and every $x \in \X$, $k \in \R$ we have
\begin{align*}
    \widetilde T_l^{\tau, \X} \circ \widetilde \Phi_v (b, k, x) &= \widetilde T_l^{\tau, \X}  ( b, k, [I_{m+n}, a_{k} v] x) \\
    &= (b, k +l  ,[I_{m+n}, a_{k} v] x ) \\
    &= \widetilde \Phi_{a_{-l}v}(b, k +l, x ) \\
    &= \widetilde  \Phi_{a_{-l}v} \circ \widetilde T_l^{\tau, \X}   (b,k,x).
\end{align*}
Thus, 
\begin{align}
\label{eq: x2}
    \widetilde T_l^{\tau, \X}  \circ \widetilde  \Phi_v = \widetilde  \Phi_{a_{-l}v} \circ \widetilde T_l^{\tau, \X} .
\end{align}
Equations \eqref{eq: x1} and \eqref{eq: x2} prove the lemma. 
\end{proof}

\begin{defn}
    Let us denote by $\sigma_0: B^{\tau, \X} \rightarrow \MM_1(\R^{m+n})$ the map given by `conditional measures of the probability measures of the probability measure $\beta^\X$ with respect to the horocyclic action of $\R^{m+n}.$' Also, denote by $t_v$ the translation of $\R^{m+n}$ by an element $v \in \R^{m+n}$.
\end{defn}

\begin{lem}
\label{lem: 6.11}
    There exists a measurable map $ \sigma: B^{\tau, \X} \rightarrow \MM(\R^{m+n})$ such that the composition of $\sigma$ with the natural projection map $\MM(\R^{m+n}) \rightarrow \MM_1(\R^{m+n})$ equals $\sigma_0$, and satisfy that for any $v \in \R^{m+n}$ and $(b,k, x) \in B^{\tau, \X}$, we have
    $$
    {t_v}_* \sigma(\Phi_v(b,k,x)) = \sigma(b,k,x).
    $$
\end{lem}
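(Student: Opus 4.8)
The plan is to produce the conditional measures first as elements of $\MM_1(\R^{m+n})$ via the general machinery of Section \ref{Conditional Measures}, and then to upgrade them to genuine Radon measures with \emph{exact} $\Phi$-equivariance by exploiting the very concrete description of the $\Phi$-orbits in $B^{\tau,\X}$. First I would check that the horocyclic action $\Phi$ of $R=\R^{m+n}$ on $Z=B^{\tau,\X}$ has discrete stabilizers: if $\Phi_v(b,k,x)=(b,k,x)$ then $[I_{m+n},a_kv]$ fixes $x\in\X=G/\Gamma$, hence lies in a discrete conjugate of $\Gamma$, so $\mathrm{Stab}(b,k,x)$ is a lattice in $\R^{m+n}$. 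Proposition \ref{prop: 4.2} then applies to $(\Phi,Z,\beta^{\tau,\X})$ and produces $\sigma_0:Z\to\MM_1(\R^{m+n})$ (unique up to a $\beta^{\tau,\X}$-null set) together with a conull Borel set on which $\sigma_0(z)\simeq t_{v*}\sigma_0(\Phi_v z)$ for all $v$; this is the map in the definition of $\sigma_0$ above.

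The key observation is that the $\Phi$-orbits are compact. Indeed the orbit of $(b,k,x)$ is $\{(b,k)\}$ times the orbit of $x$ under $\{[I_{m+n},w]:w\in\R^{m+n}\}$, which is exactly the torus fibre of $\pi:\X\to\X'$ through $x$; hence the orbit space of $\Phi$ is the standard Borel space $B^\tau\times\X'$, with quotient map $(b,k,x)\mapsto(b,k,\pi(x))$. Using $\pi_*\mu_b=\mu_{\X'}$ (Lemma \ref{lem:Imp Project mu b equal mu}), I would disintegrate $\mu_b=\int_{\X'}\mu_{b,y}\,d\mu_{\X'}(y)$ with each $\mu_{b,y}$ a probability measure on $\pi^{-1}(y)$, so that $\beta^{\tau,\X}=\int_{B^\tau\times\X'}\delta_{(b,k)}\otimes\mu_{b,y}\,d(\beta^\tau\otimes\mu_{\X'})(b,k,y)$. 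For $z=(b,k,x)$ with $y=\pi(x)$, let $\sigma(z)\in\MM(\R^{m+n})$ be the pull-back of $\mu_{b,y}$ along the orbit parametrization $u\mapsto\Phi_u z$: concretely, $\sigma(z)$ is the unique $\mathrm{Stab}(z)$-periodic Radon measure on $\R^{m+n}$ whose push-forward under $u\mapsto\Phi_u z$ of its restriction to any fundamental domain of $\mathrm{Stab}(z)$ equals $\mu_{b,y}$. Measurability of $z\mapsto\sigma(z)$ is inherited from measurability of the disintegration.

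It then remains to verify the two claims. Equivariance is a direct computation: the orbits of $z$ and $\Phi_v z$ coincide and carry the same fibre measure $\mu_{b,y}$, the orbit parametrizations satisfy $\Phi_{\bullet}(\Phi_v z)=\Phi_{\bullet+v}(z)$, and substituting this into the canonical periodic-lift prescription forces $\sigma(\Phi_v z)=t_{-v*}\sigma(z)$, that is $t_{v*}\sigma(\Phi_v z)=\sigma(z)$, for every $v$ and every $z$. For the lifting statement, composing $\sigma$ with the projection $\MM(\R^{m+n})\to\MM_1(\R^{m+n})$ yields a map satisfying the $\simeq$-equivariance of Proposition \ref{prop: 4.2}; since it is moreover a version of the conditional measures of $\beta^{\tau,\X}$ along $\Phi$ (routine here, since the orbit space is genuinely standard Borel), the uniqueness clause of Proposition \ref{prop: 4.2} lets us take $\sigma_0$ to be exactly this composition.

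The only genuinely delicate point — and the reason this lemma is needed beyond Proposition \ref{prop: 4.2} — is the \emph{consistent} choice of scalar normalization along each whole orbit: Proposition \ref{prop: 4.2} pins down $\sigma_0(z)$ only up to a cocycle $c(v,z)\in\R_{>0}$ with $t_{v*}\widetilde\sigma(\Phi_v z)=c(v,z)\widetilde\sigma(z)$ and $c(v+w,z)=c(v,\Phi_w z)c(w,z)$, and the content of the lemma is that $c$ is a coboundary. It is precisely the compactness of the $\Phi$-orbits — equivalently, the existence of a Borel cross-section for the action — that makes the normalization ``mass $1$ on a fundamental domain of the stabilizer'' globally well-defined and automatically $\Phi$-invariant, thereby trivializing $c$.
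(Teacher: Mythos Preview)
Your argument is correct. It differs from the paper's in packaging rather than in spirit: both hinge on the observation that the $\Phi$-orbits are the torus fibres of $\pi:\X\to\X'$, hence admit a Borel cross-section. The paper exploits this by writing down the explicit discrete section
\[
\Sigma=\{(b,k,[A,0]\Gamma):b\in B,\ 0\le k<\tau(b),\ A\in\SL_{m+n}(\R)\},
\]
noting that each orbit meets $\Sigma$ in a single point (so if $z,\Phi_v z\in\Sigma$ then $\Phi_v z=z$), and then \emph{defining} $\sigma(b,k,[A,v]\Gamma)=(t_{-v})_*\sigma_\Sigma(b,k,[A,0]\Gamma)$ directly from the section-wise conditional measures $\sigma_\Sigma$ of \eqref{eq: 4.2}. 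Well-definedness is exactly the stabilizer-invariance of $\sigma_\Sigma$ furnished by Lemma~\ref{lem: 4.1}, and the equivariance is then tautological; no appeal to the uniqueness clause of Proposition~\ref{prop: 4.2} is needed because $\sigma$ is built straight out of $\sigma_\Sigma$.

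You instead disintegrate $\beta^{\tau,\X}$ over the orbit space $B^\tau\times\X'$ (using Lemma~\ref{lem:Imp Project mu b equal mu}), periodically lift the fibre probabilities, and then invoke uniqueness in Proposition~\ref{prop: 4.2} to identify the projective class with $\sigma_0$. This is slightly longer but more transparent about \emph{why} the normalization cocycle is a coboundary, and it makes the later appearance of the specific normalization in Lemma~\ref{lem: 6.12} (mass one on a fundamental domain) very natural. The paper's route is shorter and stays entirely within the $\sigma_\Sigma$-formalism already set up; yours is more self-contained and would transfer verbatim to any action with compact orbits and standard Borel orbit space.
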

\begin{proof}
  Choose the discrete section of the action of $\R^{m+ n}$ as 
  $$
  \Sigma = \{(b, k,[A, 0]\Gamma): b \in B, 0 \leq k < \tau(b), A \in \SL_{m+n}(\R)\}.
  $$
  Then it is clear that if $z \in \Sigma$ and $v \in \R^{m+n}$ satisfy that $\Phi_v(z) \in \Sigma$, then $\Phi_v(z) =z$. Hence, for all $z \in \Sigma$ and $v \in \R^{m+n}$ such that $\Phi_v(z) \in \Sigma$, we have
  \begin{align}
      \label{eq: w1}
      \sigma_\Sigma(z) = t_{v*} \sigma(\Phi_v(z)).
  \end{align}
  Hence, we can define for all $[A, v]\Gamma \in \X$
  $$
  \sigma((b, k,[A, v]\Gamma)) = (t_{-v})_* \sigma_\Sigma(b, k,[A, 0]\Gamma),
  $$
  which is well defined by \eqref{eq: w1}. This map $\sigma$ satisfies the conditions of lemma.
\end{proof}

\begin{lem}
\label{lem: 6.12}
    For any $l \geq 0$, for $\beta^{\tau, \X}$-a.e.  $(c, x) \in B^{\tau, \X}$, we have
    $$
    \sigma(T_l^{\tau,\X}(c,x)) = (a_{-l})_* \sigma(c,x)
    $$
\end{lem}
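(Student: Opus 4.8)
The plan is to exploit two structural facts. First, $T_l^{\tau,\X}$ preserves $\beta^{\tau,\X}$ (recorded right after the definition of $B^{\tau,\X}$, following \cite{BQ1}). Second, by Lemma \ref{lem: Equivalence of Action} the map $T_l^{\tau,\X}$ conjugates the horocyclic action through the linear automorphism $v\mapsto a_{-l}v$ of $\R^{m+n}$: one has $T_l^{\tau,\X}\circ\Phi_v=\Phi_{a_{-l}v}\circ T_l^{\tau,\X}$. A measure‑preserving transformation that conjugates the acting group's action in this way must carry the conditional measures along the action to one another via the conjugating automorphism, and this is exactly the asserted identity. I would establish it in two steps: first the statement in $\MM_1(\R^{m+n})$ for the conditional‑measure map $\sigma_0$, then an upgrade to the chosen lift $\sigma$ of Lemma \ref{lem: 6.11}.

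\emph{Step 1 (the $\MM_1$ statement).} I would appeal to the uniqueness in Proposition \ref{prop: 4.2}. Fix a discrete section $\Sigma$ for the action $\Phi$ of $\R^{m+n}$ on $B^{\tau,\X}$, and set $\Sigma_l:=(T_l^{\tau,\X})^{-1}(\Sigma)$. This is again a discrete section: the visit‑time set $\{v:\Phi_v(z)\in\Sigma_l\}$ equals $a_l\cdot\{w:\Phi_w(T_l^{\tau,\X}z)\in\Sigma\}$, hence discrete and closed since $a_l$ is a linear isomorphism, and $\R^{m+n}\cdot\Sigma_l=(T_l^{\tau,\X})^{-1}(\R^{m+n}\cdot\Sigma)=B^{\tau,\X}$. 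The map $(r,z)\mapsto(a_{-l}r,\,T_l^{\tau,\X}z)$ carries $\R^{m+n}\times\Sigma_l$ onto $\R^{m+n}\times\Sigma$ compatibly with the action maps $a$ of \eqref{eq: 4.1}; since $T_l^{\tau,\X}$ preserves $\beta^{\tau,\X}$, it transports $a^*\beta^{\tau,\X}$ on $\R^{m+n}\times\Sigma_l$ to $a^*\beta^{\tau,\X}$ on $\R^{m+n}\times\Sigma$, and reading off the disintegrations along $\pi_\Sigma$ as in \eqref{eq: 4.2} gives $\sigma_{\Sigma_l}(z)\simeq(a_l)_*\sigma_{\Sigma}(T_l^{\tau,\X}z)$ for $(\beta^{\tau,\X})_{\Sigma_l}$‑a.e. $z$. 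Inserting this into the characterization "$\sigma_0(z_0)\simeq t_{r*}\sigma_\Sigma(rz_0)$" of $\sigma_0$ from Proposition \ref{prop: 4.2} and invoking the uniqueness there yields $\sigma_0(T_l^{\tau,\X}(c,x))\simeq(a_{-l})_*\sigma_0(c,x)$ for $\beta^{\tau,\X}$‑a.e. $(c,x)$.

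\emph{Step 2 (upgrade to the lift $\sigma$).} Put $\psi(c,x):=(a_l)_*\sigma(T_l^{\tau,\X}(c,x))$. Using Lemma \ref{lem: Equivalence of Action}, then Lemma \ref{lem: 6.11} in the form $\sigma(\Phi_w y)=(t_{-w})_*\sigma(y)$, and finally the elementary identity $a_l\circ t_{-a_{-l}v}=t_{-v}\circ a_l$ on $\R^{m+n}$, one computes
\[
\psi(\Phi_v(c,x))=(a_l)_*\sigma\bigl(\Phi_{a_{-l}v}(T_l^{\tau,\X}(c,x))\bigr)=(a_l)_*(t_{-a_{-l}v})_*\sigma(T_l^{\tau,\X}(c,x))=(t_{-v})_*\psi(c,x),
\]
so ${t_v}_*\psi(\Phi_v(c,x))=\psi(c,x)$; thus $\psi$ satisfies the same $\Phi$‑equivariance as $\sigma$. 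By Step 1, $\psi$ and $\sigma$ have the same image in $\MM_1(\R^{m+n})$ at $\beta^{\tau,\X}$‑a.e. point, so $\psi=g\cdot\sigma$ for a measurable $g:B^{\tau,\X}\to(0,\infty)$, and the shared equivariance forces $g$ to be constant along $\R^{m+n}$‑orbits. Comparing the restrictions of $\sigma$ and $\psi$ to the explicit discrete section $\Sigma=\{(b,k,[A,0]\Gamma)\}$ used in Lemma \ref{lem: 6.11}, where both equal the disintegration datum of $\beta^{\tau,\X}$ transported through $T_l^{\tau,\X}$ as in Step 1, one gets $g\equiv1$, i.e. $\sigma(T_l^{\tau,\X}(c,x))=(a_{-l})_*\sigma(c,x)$.

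\emph{Main obstacle.} The only subtle point is Step 2: passing from the scaling‑class identity in $\MM_1$ to an honest equality of Radon measures for the particular lift $\sigma$ (equivalently, pinning the orbitwise scalar $g$ to $1$), compounded by the fact that $T_l^{\tau,\X}$ need not be invertible, so one must phrase everything in terms of preimages of sections and the trace formula \eqref{eq: 4.2} rather than push‑forwards of fundamental domains. Both are handled exactly as in \cite{BQ1}; Step 1 itself is essentially formal once Proposition \ref{prop: 4.2} is available.
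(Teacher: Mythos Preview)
Your Step~1 matches the paper's argument: the proportionality in $\MM_1(\R^{m+n})$ follows from the uniqueness clause of Proposition~\ref{prop: 4.2}, the conjugation relation of Lemma~\ref{lem: Equivalence of Action}, and $T_l^{\tau,\X}$-invariance of $\beta^{\tau,\X}$.

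The gap is in Step~2. You correctly obtain $\psi=g\cdot\sigma$ with $g$ constant on $\Phi$-orbits, but your justification that $g\equiv 1$ (``where both equal the disintegration datum of $\beta^{\tau,\X}$ transported through $T_l^{\tau,\X}$ as in Step~1'') does not follow from what Step~1 established. There you only proved a proportionality $\sigma_{\Sigma_l}(z)\simeq(a_l)_*\sigma_\Sigma(T_l^{\tau,\X}z)$, because the fiber measures in \eqref{eq: 4.2} are determined only up to the arbitrary choice of $\lambda_\Sigma$. Moreover $T_l^{\tau,\X}$ does not carry $\Sigma$ into $\Sigma$: the elements $b_i^{-1}$ have nonzero translation parts $w_e$, so $T_l^{\tau,\X}(b,k,[A,0]\Gamma)$ lands at a point of the form $(c',[A',v']\Gamma)$ with $v'\neq 0$ in general. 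Hence ``comparing restrictions to $\Sigma$'' forces you to evaluate $\sigma$ at $T_l^{\tau,\X}z\notin\Sigma$ via the formula $(t_{-v'})_*\sigma_\Sigma(c',[A',0]\Gamma)$, and you are back to the same undetermined scalar.

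The paper pins the scalar down by a concrete normalization rather than an abstract section comparison: writing $x=[A,v]\Gamma$, one checks that both $\sigma(T_l^{\tau,\X}(c,x))$ and $(a_{-l})_*\sigma(c,x)$ assign mass~$1$ to the set $a_{-l}A[0,1]^{m+n}$. The reason this works is that by Lemma~\ref{lem: 6.11} the lift $\sigma(c,x)$ is invariant under the $\Phi$-stabilizer of $(c,x)$, which is a full-rank lattice in $\R^{m+n}$, and the conjugation relation \eqref{eq:Action equivalence} carries this lattice to the $\Phi$-stabilizer of $T_l^{\tau,\X}(c,x)$ via $v\mapsto a_{-l}v$; so once $\sigma$ is normalized to give mass~$1$ to a fundamental domain for the stabilizer, both sides automatically agree on any fundamental domain for the image lattice. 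That single observation replaces your entire Step~2.
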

\begin{proof}
First of all, note that for $\beta^{\tau, \X}$-a.e.  $(c, x) \in B^{\tau, \X}$, we have
$$
    \sigma(T_l^{\tau,\X}(c,x)) \simeq (a_{-l})_* \sigma(c,x)
$$
This is a result of the uniqueness  of $\sigma_0$, equality \eqref{eq:Action equivalence} and the fact that for $\beta$-a.e. $b \in B$, for any $p \in
\N$, the action of $b_{p}^{-1} \cdots b_1^{-1}$ induces an
isomorphism between the measure spaces $(\X, \nu_b)$ and $(\X,
\nu_{T^pb})$. To show that equality hols, suppose that $x= [A,v]\Gamma$. 

Then, note that both $\sigma(T_l^{\tau,\X}(c,x))$ and $(a_{-l})_* \sigma(c,x)$ both give measure one to the set $a_{-l}A[0,1]^{m+n}$.
\end{proof}

\begin{cor}\label{cor: 6.13}
The map $\sigma: B^{\tau, \X}
\to \MM(\R^{m+n})$ is $\QQ^{\tau, \X}_\infty$-measurable. 
\end{cor}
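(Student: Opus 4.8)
The plan is to deduce from the transformation law of Lemma \ref{lem: 6.12} that, for each fixed $l$, the map $\sigma$ factors through $T_l^{\tau,\X}$, and then to pass to the tail $\sigma$-algebra $\QQ_\infty^{\tau,\X}$. First I would rewrite Lemma \ref{lem: 6.12}, for each fixed $l\geq 0$, in the form
\[
\sigma(c,x) \;=\; (a_l)_*\,\sigma\bigl(T_l^{\tau,\X}(c,x)\bigr)\qquad\text{for }\beta^{\tau,\X}\text{-a.e.\ }(c,x)\in B^{\tau,\X}.
\]
Because $\mu\mapsto (a_l)_*\mu$ is a homeomorphism of $\MM(\R^{m+n})$ with the vague topology, the map $(a_l)_*\circ\sigma$ is $\cB^{\tau,\X}$-measurable; and since $\QQ_l^{\tau,\X}=(T_l^{\tau,\X})^{-1}(\cB^{\tau,\X})$ by definition, precomposing it with $T_l^{\tau,\X}$ produces a genuinely $\QQ_l^{\tau,\X}$-measurable map. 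Hence $\sigma$ agrees $\beta^{\tau,\X}$-a.e.\ with a $\QQ_l^{\tau,\X}$-measurable map, for every $l\geq0$.

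To upgrade this to $\QQ_\infty^{\tau,\X}$-measurability I would fix a countable family $\{\varphi_j\}\subset C_c(\R^{m+n})$ of nonnegative functions whose evaluations $\mu\mapsto\mu(\varphi_j)$ generate the Borel $\sigma$-algebra of $\MM(\R^{m+n})$, and set $F_j(c,x)=\sigma(c,x)(\varphi_j)$. By the first paragraph, for every $l$ the nonnegative function $F_j$ coincides $\beta^{\tau,\X}$-a.e.\ with a $\QQ_l^{\tau,\X}$-measurable function, so $\EE\bigl(\min(F_j,R)\mid\QQ_l^{\tau,\X}\bigr)=\min(F_j,R)$ holds $\beta^{\tau,\X}$-a.e.\ for every constant $R>0$. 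Since the $\QQ_l^{\tau,\X}$ are decreasing with $\bigcap_{l\geq0}\QQ_l^{\tau,\X}=\QQ_\infty^{\tau,\X}$ (it suffices to let $l\to\infty$ along $\N$), the reverse martingale convergence theorem gives $\min(F_j,R)=\EE\bigl(\min(F_j,R)\mid\QQ_\infty^{\tau,\X}\bigr)$ a.e.; letting $R\to\infty$ shows each $F_j$ is $\QQ_\infty^{\tau,\X}$-measurable up to $\beta^{\tau,\X}$-null sets, and therefore so is $\sigma$.

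I expect the main (indeed only) delicate point to be this last passage: the exceptional set in Lemma \ref{lem: 6.12} depends on $l$, so one cannot simply intersect the $\QQ_l^{\tau,\X}$, and it is the reverse-martingale (equivalently, conditional-expectation fixed-point) argument that legitimizes the limit, the truncation at $R$ merely securing integrability against the probability measure $\beta^{\tau,\X}$. As an alternative to the first paragraph, one can compute the conditional expectation directly from Proposition \ref{cor:BQ 3.8}: since $h_{l,c,x}(a)$ ranges over the fibre $(T_l^{\tau,\X})^{-1}(c,x)$, Lemma \ref{lem: 6.12} gives $\sigma\bigl(h_{l,c,x}(a)\bigr)=(a_l)_*\sigma\bigl(T_l^{\tau,\X}(c,x)\bigr)=\sigma(c,x)$ for $\beta^{\tau,\X}$-a.e.\ $(c,x)$ and $\beta$-a.e.\ $a$, whence $\EE\bigl(F_j\mid\QQ_l^{\tau,\X}\bigr)(c,x)=\int_B F_j\bigl(h_{l,c,x}(a)\bigr)\,d\beta(a)=F_j(c,x)$, and one concludes as above.
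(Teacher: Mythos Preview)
Your proof is correct and follows the same route as the paper: rewrite Lemma~\ref{lem: 6.12} as $\sigma(c,x)=(a_l)_*\sigma(T_l^{\tau,\X}(c,x))$ to see that $\sigma$ is $\QQ_l^{\tau,\X}$-measurable (modulo null sets) for every $l$, and then descend to $\QQ_\infty^{\tau,\X}$. The paper's proof is a two-line version that simply asserts the first step suffices, without addressing the point you flag---that the exceptional null set in Lemma~\ref{lem: 6.12} depends on $l$, so one is really intersecting the \emph{completions} of the $\QQ_l^{\tau,\X}$ rather than the $\sigma$-algebras themselves. Your reverse-martingale argument (with the truncation to guarantee integrability) is exactly the standard way to close this gap, and makes the passage rigorous; the paper, following Benoist--Quint, leaves it implicit.
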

\begin{proof}
It suffices to show that for any $l \geq 0$, it is $\QQ^{\tau, \X}_l$-measurable. This results from the equality, for
$\beta^{\X}$-a.e. $(c,x) \in B^{\tau, \X}$, $\sigma(c,x) =
(a^{l})_*(\sigma(T^{\tau,\X}_l(c,x))).$ 
\end{proof}

\section{The exponential drift}
\label{The exponential drift}
The main goal of this section is to study ``exponential drift". This is encoded in the following key proposition:

\begin{prop}\label{prop: 7.1}
For $\beta^{\tau, \X}$-a.e. $(c,x) \in B^{\tau, \X}$, for any $\e>0$, there exists a nonzero element $v \in \R^{m+n}$ of norm at most $\e$ and an element $(c',x')$ such that 
\begin{equation}
  \label{eq: 7.1}
\sigma(c,x) = \sigma(c',x')= \sigma(\Phi_v(c',x')). 
\end{equation}

\end{prop}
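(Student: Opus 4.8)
The plan is to run a Benoist–Quint style exponential drift argument, but adapted to the present non-expanding setting, where the key new input is Lemma \ref{lem: Condition on Fractal}. I will work inside the suspension system $(B^{\tau,\X},\cB^{\tau,\X},\beta^{\tau,\X},(T_l^{\tau,\X})_l)$ and exploit the conditional measures $\sigma$ of the horocyclic $\R^{m+n}$-action together with the cocycle identities from Lemmas \ref{lem: Equivalence of Action}, \ref{lem: 6.11}, \ref{lem: 6.12} and Corollary \ref{cor: 6.13}.

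\textbf{Step 1: Reduce to producing a drift vector in the fiber.} Fix a $\beta^{\tau,\X}$-typical $(c,x)$. For large $l$, I look at the fiber $(T_l^{\tau,\X})^{-1}(T_l^{\tau,\X}(c,x))$, which by Proposition \ref{cor:BQ 3.8} is parametrized by $a\mapsto h_{l,c,x}(a)$ with $a$ distributed according to $\beta$. Two nearby points $(c,x)$ and $h_{l,c,x}(a)$ in the same fiber differ, after applying $T_l^{\tau,\X}$, by the group element $\rho_l(h_{l,c}(a))^{-1}\rho_l(c) = [A_{l,c,a},v_{l,c,a}]$ described in Lemma \ref{lem: Condition on Fractal}. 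Because $\QQ^{\tau,\X}_\infty$-measurability of $\sigma$ (Corollary \ref{cor: 6.13}) forces $\sigma$ to be essentially constant along these fibers, the points $(c,x)$ and $h_{l,c,x}(a)$ carry (after translation) the same conditional measure. The idea is: choose $a$ in the ``good'' set $Y_l(c)$ of Lemma \ref{lem: Condition on Fractal}, so that the displacement has an $\R^m\times\{0\}^n$-translation part $v_{l,c,a}$ with $\|v_{l,c,a}\|_\infty\asymp e^{l}$; then \emph{flow back} by $T_l^{\tau,\X}$ (equivalently apply $a_{-l}$ via Lemma \ref{lem: 6.12}), which contracts this translation part by $e^{-l}$ on the $\R^m$ block — landing it in a fixed annulus of radii between $C_2 e^{-C_1}$ and $C_3 e^{C_1}$ — while the $A$-part stays bounded (it lies in the compact set of Lemma \ref{lem: Condition on Fractal}). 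Passing to a subsequence $l_j\to\infty$, the displacement elements converge to some $[A_\infty, v_\infty]$ with $v_\infty\in\R^m\times\{0\}^n$ \emph{nonzero} (this is exactly what the fractal-non-degeneracy in Lemma \ref{lem: Condition on Fractal} buys), and $A_\infty$ of the stated block form.

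\textbf{Step 2: Pass to the limit and extract the relation.} Along the subsequence, the basepoints $T_{l_j}^{\tau,\X}(c,x)$ and $T_{l_j}^{\tau,\X}(h_{l_j,c,x}(a_j))$ both stay in a compact set (after discarding a small-measure bad set, using that $\beta^{\tau,\X}$ is a probability measure and an Egorov/Lusin argument to make the convergence of $\sigma$ and of the displacement elements uniform), so they converge along a further subsequence to points $(c',x')$ and $\Phi_{v'}(c',x')$ for a suitable $v'$. The continuity/consistency of $\sigma$ under the $\R^{m+n}$-action (Lemma \ref{lem: 6.11}) and the fiberwise constancy then give $\sigma(c',x')=\sigma(\Phi_{v'}(c',x'))$, while the $\QQ^{\tau,\X}_\infty$-measurability propagated back along $T_{-l_j}$ gives $\sigma(c,x)=\sigma(c',x')$. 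Finally, $v'$ is a nonzero multiple (coming from the annulus bound) of a vector in $\R^m\times\{0\}^n$; rescaling $l$ slightly — i.e. flowing a further bounded time, since $a_t$ acts on the $\R^m$-block by a genuine expansion $e^t$ and we can tune $t$ freely — adjusts $\|v'\|$ to be at most $\e$, yielding the element $v$ of norm $\le\e$ with \eqref{eq: 7.1}.

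\textbf{Main obstacle.} The delicate point — and where the non-expanding nature of the walk really bites — is Step 2's compactness/limiting argument: in the classical Benoist–Quint setting the walk is expanding on average, which is what lets one control the basepoints and guarantees the drift vector is nonzero almost automatically. Here there is a contracting direction ($\rho_e^{-m/n}I_n$ on the $\R^n$ block is \emph{expanding}, but the $\rho_e O_e$ block contracts), so I cannot rely on the usual recurrence-via-expansion mechanism. The substitute is precisely Lemma \ref{lem: Condition on Fractal}: the set $\cK=\eta(B)$ is not a point (forced by the hypothesis that $E$ is not virtually inside $\SL_{m+n}(\R)$), and this is what supplies, for \emph{every} $c$ and a uniform proportion $\kappa$ of $a$'s, a displacement whose translation part has size comparable to $e^{l}$ — never degenerating. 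Making this quantitative drift survive the passage to the limit (uniform non-degeneracy of $v_\infty$, compactness of basepoints after removing a small bad set, and matching up $\sigma$'s on both legs) is the technical heart of the proof and will require the careful Egorov-type argument sketched above, together with the observation that $T_l^{\tau,\X}$ conjugates $\Phi_v$ to $\Phi_{a_{-l}v}$ so that the drift vector transforms exactly as needed.
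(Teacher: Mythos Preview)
Your plan has a genuine gap at the mechanism for producing the small nonzero drift vector. You compare $(c,x)$ with $h_{l,c,x}(a)$ in the same fiber of $T_l^{\tau,\X}$; but by definition of the fiber, $T_l^{\tau,\X}(c,x)=T_l^{\tau,\X}(h_{l,c,x}(a))$ are \emph{equal}, so ``the basepoints $T_{l_j}^{\tau,\X}(c,x)$ and $T_{l_j}^{\tau,\X}(h_{l_j,c,x}(a_j))$ converge to $(c',x')$ and $\Phi_{v'}(c',x')$'' forces $v'=0$. If instead you stay at time zero, the $\X$-components of $(c,x)$ and $h_{l,c,x}(a)$ differ by $[A_{l,c,a},v_{l,c,a}]$ with $\|v_{l,c,a}\|\asymp e^{l}\to\infty$, so there is no convergence of displacement elements; and the $B^\tau$-components $c$ and $h_{l,c}(a)$ are different, so even after using Lemma~\ref{lem: 6.11} you only relate $\sigma$ at distinct $B^\tau$-basepoints. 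There is no backward flow $T_{-l}^{\tau,\X}$ available to ``contract by $e^{-l}$'', and Lemma~\ref{lem: 6.12} only says $\sigma(T_l^{\tau,\X}(c,x))=(a_{-l})_*\sigma(c,x)$, which does not let you shrink the displacement between two fiber-mates.

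The paper's argument supplies exactly what is missing, and it is here that the hypothesis $\pi_*\mu=\mu_{\X'}$ (never invoked in your sketch) enters. Via Lemma~\ref{lem:Imp Project mu b equal mu} and the Lebesgue density theorem on $\X'$, for $\beta^\tau$-a.e.\ $c$ and $\mu_c$-a.e.\ $x$ one finds, for each large $l$, a small perturbation $f(e^{-l}M)=\exp(e^{-l}M)x$ with $M\in B_{\e'}(\sL_{m+n}(\R))$ still lying in the good set $L_c$. One then compares $h_{l,c,x}(a)$ with $h_{l,c,f(e^{-l}M)}(a)$: these share the \emph{same} $B^\tau$-component $h_{l,c}(a)$, and their $\X$-components differ by the conjugate
\[
[A_{l,c,a},v_{l,c,a}]\,[\exp(e^{-l}M),0]\,[A_{l,c,a},v_{l,c,a}]^{-1}=\exp([M'_l,\ v_l]),
\]
where $M'_l\to 0$ (since $A_{l,c,a}$ is bounded and $e^{-l}M\to 0$) while the translation part is essentially $e^{-l}M\cdot v_{l,c,a}$, which is of bounded size because $\|v_{l,c,a}\|\asymp e^{l}$. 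Choosing $M$ from a positive-measure set where $\|e^{-l}M\,v_{l,c,a}\|\in[\delta,\e]$ (this is where Lemma~\ref{lem: Condition on Fractal} is used, to ensure $v_{l,c,a}$ is genuinely large on the set $Y_l(c)$), and letting $l\to\infty$, yields the required limit $(c',x')$ and nonzero $v$ with $\|v\|\le\e$. In short, the drift is not produced by the random walk alone but by the interaction of a small $\SL_{m+n}$-perturbation (available thanks to $\pi_*\mu=\mu_{\X'}$) with the large translation $v_{l,c,a}$ coming from Lemma~\ref{lem: Condition on Fractal}.
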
 

\begin{proof}
The argument begins in the same fashion as that of Benoist and Quint. Namely, we may assume that $\MM(\R^{m+n})$ is endowed with the topology of a complete separable metric space. We may also endow $B^\tau$ with the topology of a compact
metric space and endow $B^\tau \times X$ with the product topology of this topology and the usual topology on $X$. 

Let $\alpha>0$; by Lusin's theorem, there is  a compact subset $K$ in $B^{\X}$ such that $\beta^{\X}(K^c) < \alpha^2$ and such that the function $\sigma$ is uniformly continuous on $K$. Due to Lemma \ref{lem: 6.11}, we can assume that $K$ is invariant under the horocycle flow. 

Since $\sigma$ is $\QQ_\infty^{\tau,\X}$-measurable, it is
$\QQ_l^{\tau ,X}$-measurable for each $l \geq 0$, and hence, by Proposition \ref{cor:BQ 3.8}, we can assume that for every $(c,x)
\in K$, for $\beta$-a.e. $a \in B$, for any $l \geq 0$ rational, we
have $\sigma(h_{l, c,x}(a)) = \sigma(c,x)$.\\

We will study the function $\EE \left(1_K |\QQ_\infty^{\X} \right)$. On the one hand, it is bounded above by 1 and on the other,

\begin{align}
    \label{eq: 7.2}
\int_{B^{\X}} \EE\left(1_K | \QQ_\infty^{\X}\right)(c,x) d\beta^{\X}(c,x) = \beta^{\X}(K) > 1-\alpha^2.
\end{align}
We therefore have that $\EE \left(1_K | \QQ_\infty^{\X}\right)$ is bounded below
by $1-\alpha$ on a set of measure  $1-\alpha$. There is then a
compact subset $L_1 \subset E$ in $B^{\X}$ such that $\beta^{\X}(L_1^c) < \alpha$ and such that, for every $(c,x) \in L_1$, we have
\begin{align}
    \label{eq: 7.3}
\EE \left(1_K | \QQ_\infty^{\X} \right)(c,x) > 1-\alpha.
\end{align}

Now by the Martingale convergence theorem, for $\beta^{\X}$-a.e. $(c,x) \in B^{\X}$, we have 
\begin{align}
    \label{eq: 7.4}
\lim_{l \to \infty} \EE\left(1_K | \QQ_l^{\X} \right)(c,x) = \EE \left(1_K | \QQ_\infty^{\X} \right)(c,x).
\end{align}

By Corollary \ref{cor:BQ 3.8}, we may also suppose that for every $(c,x) \in L_1$ and $l \in \Q$, the left-hand side of \eqref{eq: 7.4} is given by the formula \eqref{eq: BQ 3.11}. Thus
\begin{align}
  \label{eq: 7.5}
\EE \left(1_K | \QQ_l^{\X} \right)(c,x) = \int_B 1_K (h_{l, c, x}(a)) d\beta(a),  
\end{align}
where 
$$
h_{l, c,x}(a) = (c',x') \text{ with } c' = h_{l, c}(a) \text{
  and } x' = \rho_l(c')^{-1} \rho_l(c)x.
$$

We now appeal to Egorov's theorem which guarantees us that outside a subset of $L_1$ of arbitrarily
small $\beta^{\X}$-measure, the convergence in \eqref{eq: 7.4} above is
uniform on $L_1$. Therefore, after removing a subset of $L_1$ of small
measure, there exists $l_0 \geq 0$ such that for 
every rational $l \geq l_0$, for every $(c,x) \in L$, we have 
\begin{align}
  \label{eq: 7.6}{
\EE\left(1_K | \QQ_l^{\X} \right)(c,x) \geq 1-\alpha.
}  
\end{align}

Let $L= (\bigcup_{v \in \R^{m+n}} \Phi_v(L_1))\cap K$. Clearly, we have that $L$ is compact and since $L \subset K$, we have that $\sigma$ is uniformly continuous on $L$. Also, we have that for every rational $l \geq l_0$ and $(c,x) \in L$, we have
$$
    \int_B 1_K (h_{l, c, x}(a)) d\beta(a) \geq 1-\alpha.
$$

Note that the $\beta^{\tau,\X}$-measure of $L^c$ is at most $2\alpha$ and since 
$\alpha$ was chosen arbitrarily small, it suffices to prove \eqref{eq: 7.1}
 for $\beta^{\X}$-a.e. $(c,x) \in L$.\\ 

For $c \in B^\tau$, define $L_c =\{x \in \X: (c,x) \in L\}$. We will show that for $\beta^\tau$-a.e. $c$, we have that for $\mu_c$-a.e. $x \in L_c$ satisfies \eqref{eq: 7.1}. For the rest of the proof, we fix $c \in B^\tau$ such that $\mu_{c}(L_c) >0$.

Note that due to invariance under the horocycle flow of the set $L$, we get that 

$$L_c = \{[A,v]\Gamma: A\Gamma' \in \pi(L_c), v \in \R^{m+n}\}.$$ 

We will denote by $L'_c$ the set $\pi(L_c)$.

Note that $\mu_c(L_c)>0$ implies that using Lemma \ref{lem:Imp Project mu b equal mu}, we may assume that $\mu_{\X'}(L_c')>0$. 

Set $D = (m+n)^2 -1$. Using the manifold structure of $\X'$, we can cover $L_c'$  by finitely many open sets $\{U_i: i \in I\}$ in $\X'$ and can fix embeddings $\{f_i: U_i \rightarrow \R^{D}\}_{i \in I}$. Clearly for all $i \in I$, the pushforward of $\mu_{\X'}|_{U_i}$ under $f_i$ is absolutely continuous with respect to Lebesgue measure on $\R^{D}$, hence we get that $f_i(U_i \cap L_c')$ is a set of positive Lebesgue measure in $\R^{D}$. 

Fix $i \in I$ and $A\Gamma' \in U_i$ such that 
\begin{align}
    \label{eq: Lebesgue Density theorem}
    \frac{\mu_{\R^D}(B_\delta(f_i(A\Gamma') ) \cap f_i(U_i \cap L_c') )}{\mu_{\R^D}(B_\delta(f_i(A\Gamma')))} \rightarrow 1
\end{align}
as $\delta \rightarrow 0$, where $\mu_{\R^D}$ denotes the Lebesgue measure on $\R^D$ and $B_\delta(x) \subset \R^D$ denotes the open ball around $x$ of radius $\delta$. Note that by the Lebesgue density theorem, \eqref{eq: Lebesgue Density theorem} holds for $\mu_{\X'}$- a.e. point in $U_i \cap L_c'$. Let $L_c''$ consist of all points in $L_c'$ such that \eqref{eq: Lebesgue Density theorem} holds for some $i \in I$. Clearly, $\mu_{\X'}(L_c' \setminus L_c'') = 0$ and the definition of $L_c''$ is independent of the open cover $\{U_i\}_i$ and the embedding $\{f_i\}_i$.

For the rest of the proof, fix $A\Gamma' \in L_c''$. Also fix $v \in \R^{m+n}$ to be arbitrary and define $x = [A,v]\Gamma$. We will prove \eqref{eq: 7.1} for $(c, x)$. This will prove the Proposition.

Fix $\{E_{ij}: i \neq j\} \cup \{E_{ii}- E_{m+n, m+n}: 1 \leq i < m+n\}$ as a basis for $\sL_{m+n}(\R)$. Let $B_\gamma(\sL_{m+n}(\R))$ denote the Euclidean ball in $\sL_{m+n}(\R)$ with respect to the above basis. 
Fix $\gamma>0$ small enough so that the map $f: B_\gamma (\sL_{m+n}(\R)) \rightarrow \X$ as $ f(M ) = \exp(M).x$ is a diffeomorphism onto its image. Then, by \eqref{eq: Lebesgue Density theorem}, we get that
\begin{align}
    \label{eq: Lebesgue useful}
    \frac{\mu_{\R^D}(B_\delta(\sL_{m+n}(\R)) \cap f^{-1}(L_c) )}{\mu_{\R^D}(B_\delta(\sL_{m+n}(\R))} \rightarrow 1
\end{align}
as $\delta \rightarrow 0$.

Fix $\delta>0$ small enough, to be chosen later. Let $\kappa, T, C_1, C_2, C_3$ be as in Lemma \ref{lem: Condition on Fractal}. The notation $Y_l(c), A_{l,c,a}, v_{l,c,a}$ for $l \geq T$, will be as in Lemma \ref{lem: Condition on Fractal}.

The goal now is to find a sequence $(M_j)_j \in B_\gamma(\sL_{m+n, m+n}(\R))$, $(l_j)_j \in \R_+$ and $(a_j)_j \in B$ such that
\begin{itemize}
    \item $M_j \rightarrow 0$ as $j \rightarrow \infty$.
    \item $l_j \rightarrow \infty$ as $j \rightarrow \infty$.
    \item $f(M_j) \in L_c$ for all $j$.
    \item $h_{l_j, c,x}(a_j), h_{l_j, c, f(M_j)}(a_j) \in K$ for all $j$.
    \item For all $j$, $h_{l_j,c,f(M_j)}(a_j) = \exp([M_j', v_j]) h_{l_j,c,x}(a_j)$ such that $M_j' \rightarrow 0$ as $j \rightarrow  \infty$ and $ \delta \leq \|v_j\| \leq \e$.
    \item $\sigma(h_{l_j,c,x}(a))= \sigma(c,x)$ for all $j$.
    \item $\sigma(h_{l_j, c, f(M_j)}(a_j)) = \sigma(c, \exp({M_j})x)$ for all $j$.
\end{itemize}
If we can find such a sequence, then up to passing to a subsequence, we will have that
\begin{itemize}
    \item The sequence $h_{l_j, c,x}(a_j)$ has a limit $(c',x') \in K$.
    \item The sequence $v_j$ has a limit $v$ such that $0< \delta \leq \|v\| \leq \e$.
    \item The sequence $h_{l_j, c, f(M_j)}(a_j)$ has a limit and equals $$\exp([0_{m+n}, v]) . (c',x') = [I_{m+n},v].(c',x') = \Phi_v(c',x').$$
\end{itemize}

Then, since all the limits considered have values in $K$ and $L$, we have
\begin{align*}
    \sigma(c',x') = \lim_{j \rightarrow \infty} \sigma(h_{l_j, c,x}(a_j)) = \lim_{j \rightarrow \infty} \sigma(c,x) = \sigma (c,x), \\
    \sigma(\Phi_v(c',x')) = \lim_{j \rightarrow \infty} \sigma(h_{l_j, c, f(M_j)}(a_j)) = \lim_{j \rightarrow \infty} \sigma(c, \exp({M_j})x) = \sigma(c,x).
\end{align*}
Thus, \eqref{eq: 7.1} will hold for $(c,x)$. Hence, it is enough to prove the existence of a sequence satisfying the above inequalities.

Let $\e' = \e/ e^{C_1} C_2$ and let $l \geq \max\{T, l_0\}$ be a rational number.
Consider the set $X_l \subset B_{\e'}(\sL_{m+n}(\R)) \times B $ defined as the set of all points $(M,a)$ such that
\begin{itemize}
    \item $a \in Y_l(c)$, 
    \item $f(e^{-l}M) \in L_c$,
    \item $h_{l,c,x}(a) \in K$,
    \item $h_{l,c,f(e^{-l}M)}(a) \in K$,
    \item $\sigma(h_{l,c,x}(a))= \sigma(c,x)$,
    \item $\sigma(h_{l,c,f(e^{-l}M)}(a)) = \sigma(c,f(e^{-l}M))$,
    \item $\|M.v_{l,c,a}\|> \delta$.
\end{itemize}
The measure of this set with respect to the product of Lebesgue measure $\mu_{\sL_{m+n}(\R)}$ on $\sL_{m+n}(\R)$ and $\beta$ is greater than equal to the measure of the set $Z_1 \setminus (Z_2 \cup Z_3 \cup Z_4 \cup Z_5 \cup Z_6)$, where
\begin{itemize}
    \item $Z_1 = \{(M,a): a \in Y_l(c) \}$,
    \item $Z_2 = \{(M,a): f(e^{-l}M) \notin L_c \}$,
    \item $Z_3 = \{(M,a): h_{l,c,x}(a) \notin K\}$,
    \item $Z_4 = \{(M,a): f(e^{-l}M) \in L_c, h_{l,c,f(e^{-l}M)}(a) \notin K\}$,
    \item $Z_5 = \{(M,a): a \in Y_l,   \|e^{-l}M.v_{l,c,a}\| < \delta \}$,
    \item $Z_6= \{(M,a):f(e^{-l}M), \sigma(h_{l,c,x}(a))\neq \sigma(c,x), \sigma(h_{l,c,f(e^{-l}M)}(a)) \neq \sigma(c,f(e^{-l}M)) \}$.
\end{itemize}
Clearly, 
\begin{align*}
    \mu_{\sL_{m+n}(\R)} \otimes \beta (Z_1) &\geq \mu_{\sL_{m+n}(\R)}(B_{\e'}(\sL_{m+n}(\R))). \kappa, \\
    \mu_{\sL_{m+n}(\R)} \otimes \beta (Z_2) &\rightarrow 0 \text{ as } l \rightarrow \infty, \\
    \mu_{\sL_{m+n}(\R)} \otimes \beta (Z_3) &< \mu_{\sL_{m+n}(\R)}(B_{\e'}(\sL_{m+n}(\R))) .\alpha, \\
    \mu_{\sL_{m+n}(\R)} \otimes \beta (Z_4) &< \mu_{\sL_{m+n}(\R)}(B_{\e'}(\sL_{m+n}(\R))). \alpha, \\
    \mu_{\sL_{m+n}(\R)} \otimes \beta (Z_6) &=0.
\end{align*}

We will now estimate the measure of $Z_5$. Note that if $a \in Y_l(c)$, we have $e^{-C_1}C_2 < e^{-l}\|v_{l,c,a}\|_{2}$. Then the measure of the set $$\{M \in B_{\e'}(\text{Mat}_{m \times n}(\R)): \|e^{-l}M.v_{l,c,a}\| <  \delta \}$$  equals the measure of the set $$\{M \in B_{\e'}(\text{Mat}_{m \times n}(\R)): \|e^{-l}M.e_1\| <  \delta \},$$ which is less than $(\e')^{(m-1)n}. \delta^n$ (up to a constant). 
Thus, we have
\begin{align*}
    &\mu_{\sL_{m+n}(\R)} \left(\left\{M = \begin{pmatrix}
    M_{11} & M_{12} \\ M_{21} & M_{22} 
\end{pmatrix} \in B_{\e'}(\sL_{m+n}(\R)): \|e^{-l}M.v_{l,c,a}\| < \delta \right\} \right) \\
&\ll (\e')^{mn + m^2 + n^2 -1}. \mu_{\text{M}_{m \times n}(\R)}\left(M_{21} \in B_{\e'}: \|e^{-l}M_{21}.e_1\| <  \delta \right)  \\
& \ll (\e')^{mn + m^2 + n^2 -1 +(m-1)n}. \delta^n.
\end{align*}

Thus by Fubini's Theorem, the measure of $Z_5$ is less than $(\e')^{mn + m^2 + n^2 -1 +(m-1)n}. \delta^n$.

If $\delta>0$ is small enough and $\alpha < \kappa/4$ and $l $ is large enough, we get that
\begin{align*}
    &\mu_{\sL_{m+n}(\R)} \otimes \beta(X_l) \\
    &\geq \mu_{\sL_{m+n}(\R)} \otimes \beta(Z_1) - \sum_{i=2}^6 \mu_{\sL_{m+n}(\R)} \otimes \beta(Z_i) \\
    & > 0.
\end{align*}
Hence, along a sequence $l_j \in \Q$ such that $l_j \rightarrow \infty$, one can always choose $(M_j'', a_j) \in X_{l_j}$. Then with $M_j = e^{-l_j} M_j''$, the required properties are satisfied. 

\end{proof}

\section{Disintegration of $\nu_b$ along the stabilizers}
\label{Disintegration of nu along stabilizers}

Following Benoist and Quint, we consider the connected stabilizers of the measures $\sigma_0(c,x)$, namely the subspaces of $\R^{m+n}$ defined by
$$
J(c,x) = \{ v\in \R^{m+n} : t_{v*}\sigma_0(c,x) = \sigma_0(c,x) \}_0.
$$

As in \cite{BQ1}, we then have

\begin{prop}\label{prop: 7.4}
For $\beta^{\X}$-a.e. $(c,x) \in B^{\tau,\X}$, we have 
$$J(c,x) \neq \{0\}.$$
\end{prop}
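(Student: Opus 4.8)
\textbf{Proof plan for Proposition \ref{prop: 7.4}.}

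The plan is to argue by contradiction: suppose that on a positive-measure set of $(c,x)$ we have $J(c,x)=\{0\}$, i.e.\ the conditional measure $\sigma_0(c,x)$ has trivial connected stabilizer. Since $\sigma$ is $\QQ^{\tau,\X}_\infty$-measurable (Corollary \ref{cor: 6.13}) and transforms under $T^{\tau,\X}_l$ by the linear map $a_{-l}$ (Lemma \ref{lem: 6.12}), the dimension $\dim J(c,x)$ is a $T^{\tau,\X}_l$-invariant, measurable function; by ergodicity of $\beta^{\tau,\X}$ (which follows from ergodicity of $\beta^\tau$ together with the skew-product structure, as in \cite{BQ1}) it is $\beta^{\tau,\X}$-a.e.\ constant. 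So it suffices to rule out that this constant is $0$. The mechanism is exactly the exponential drift of Proposition \ref{prop: 7.1}: for a.e.\ $(c,x)$ and every $\e>0$ there is a nonzero $v\in\R^{m+n}$ with $\|v\|\le\e$ and a point $(c',x')$ with $\sigma(c,x)=\sigma(c',x')=\sigma(\Phi_v(c',x'))$, and by Lemma \ref{lem: 6.11} the equality $\sigma(c',x')=\sigma(\Phi_v(c',x'))={t_v}_*\sigma(\Phi_v(c',x'))$... — more precisely, $\sigma(\Phi_v(c',x'))=({t_{-v}})_*\sigma(c',x')$, so $\sigma(c',x')=({t_{-v}})_*\sigma(c',x')$, i.e.\ $v$ is a nonzero translation fixing $\sigma_0(c',x')$.

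The key point is then that an arbitrarily small nonzero translation fixing $\sigma_0$ forces the \emph{connected} stabilizer $J$ to be nontrivial. Here one uses that, by Proposition \ref{prop: 4.3} applied to the action of $\R^{m+n}$ on $B^{\tau,\X}$, the full (not necessarily connected) stabilizer group of $\sigma_0(c,x)$ is a closed subgroup of $\R^{m+n}$ whose identity component is $J(c,x)$; a closed subgroup of $\R^{m+n}$ with $\{0\}$ as its identity component is discrete, hence has a positive lower bound on the norm of its nonzero elements. I would first argue that this ``injectivity radius'' $r(c,x)>0$ of the stabilizer is a measurable $T^{\tau,\X}_l$-equivariant quantity — under $T^{\tau,\X}_l$ the stabilizer is conjugated by $a_{-l}$, which on the expanding part multiplies norms by $e^{-l}$ in the $\R^m$-block and by $e^{ml/n}$ in the $\R^n$-block — and then combine with a recurrence/Poincar\'e argument: along the sequence $l_j\to\infty$ produced in Proposition \ref{prop: 7.1}, the drift vector $v_j$ has norm bounded below by $\delta$, so pulling back along $a_{-l_j}$ to the base point $(c,x)$ would produce stabilizer elements of the point $(c,x)$ (or a fixed limit point $(c',x')\in K$) of norm tending to $0$ but nonzero, contradicting $r(c',x')>0$ unless $J(c',x')\ne\{0\}$. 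Since $K$ was chosen with $\beta^{\tau,\X}(K)$ close to $1$ and the conclusion is being proved a.e., this suffices.

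Concretely, the cleanest route: fix a generic $(c',x')\in K$ obtained as the limit in Proposition \ref{prop: 7.1}; the proposition gives a nonzero $v$ with $\|v\|\le\e$ and $({t_{-v}})_*\sigma_0(c',x')=\sigma_0(c',x')$; since $\e$ was arbitrary and, running the argument for a sequence $\e_k\to0$ we get nonzero stabilizer elements $v_k\to0$ — but here one must be careful that the base point may change with $\e_k$. To fix the base point, I would instead apply Proposition \ref{prop: 7.1} at a \emph{single} scale $\e$ but exploit the $a_{-l}$-equivariance of $\sigma$: starting from $(c,x)$ in a full-measure set, flow forward by $T^{\tau,\X}_{l_j}$, apply the drift to get a translation element of size $\approx\delta$ in the stabilizer of $\sigma_0(T^{\tau,\X}_{l_j}(c,x))$, then pull back by $a_{l_j}$ (the inverse of $a_{-l_j}$) to obtain a stabilizer element of $\sigma_0(c,x)$; since the pullback contracts the $\R^n$-block by $e^{-ml_j/n}$ (and the drift vectors produced in Lemma \ref{lem: Condition on Fractal} lie in $\R^m\times\{0\}^n$, where pullback by $a_{l_j}$ contracts by $e^{-l_j}$), this yields nonzero elements of the stabilizer of $\sigma_0(c,x)$ with norm $\lesssim e^{-l_j}\e\to0$, forcing $J(c,x)\ne\{0\}$.

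\textbf{Main obstacle.} The delicate part is the bookkeeping that ties the drift vector $v$ living in the \emph{fiber over the limit point} $(c',x')$ back to a genuine element of the connected stabilizer at a fixed generic point, while keeping track of which $\R^m$/$\R^n$ block the vector lies in and how $a_{-l}$ rescales it; this is where the hypothesis that $w_e\in\R^m\times\{0\}^n$ in \eqref{eq: def e} (so that the drift direction is purely in the contracted-by-pullback block) is essential. A secondary point requiring care is verifying that "full stabilizer closed $\Rightarrow$ identity component is $J$, and $J=\{0\}\Rightarrow$ stabilizer discrete" in the setting of measures up to scaling in $\MM_1(\R^{m+n})$, using Proposition \ref{prop: 4.2}; this is essentially the content of \cite[\S7]{BQ1} and I would cite it, adapting only the rescaling computation to our $a_t$.
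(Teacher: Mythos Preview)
Your proposal identifies all the right ingredients but overlooks the one-line observation that collapses the argument. You correctly derive from Proposition~\ref{prop: 7.1} and Lemma~\ref{lem: 6.11} that the drift vector $v$ satisfies $t_{v*}\sigma(c',x')=\sigma(c',x')$. But Proposition~\ref{prop: 7.1} also gives $\sigma(c,x)=\sigma(c',x')$; substituting immediately yields $t_{v*}\sigma(c,x)=\sigma(c,x)$, so $v$ lies in the stabilizer of $\sigma(c,x)$ \emph{itself}, with no transfer needed. Since this works for every $\e>0$ at the \emph{same} fixed generic $(c,x)$, the stabilizer of $\sigma(c,x)$ is a closed subgroup of $\R^{m+n}$ containing nonzero vectors of arbitrarily small norm, hence is non-discrete and has nontrivial identity component $J(c,x)$. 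This is exactly the paper's proof; your worry that ``the base point may change with $\e_k$'' is unfounded because the limit points $(c'_k,x'_k)$ all share the same $\sigma$-value, namely $\sigma(c,x)$.

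Your fallback plan (flow forward by $T^{\tau,\X}_{l_j}$, apply the drift there, pull back) also contains a direction error. From $\sigma(T^{\tau,\X}_l(c,x))=(a_{-l})_*\sigma(c,x)$ one computes that if $v$ stabilizes $\sigma(T^{\tau,\X}_l(c,x))$ then $a_l v$ stabilizes $\sigma(c,x)$. But $a_l$ \emph{expands} the $\R^m\times\{0\}^n$ block by $e^{l}$ (see \eqref{def:Diag}), so the drift vectors from Lemma~\ref{lem: Condition on Fractal}, which live precisely in that block, are sent to infinity rather than to zero under this pullback. Finally, your ergodicity reduction (that $\dim J$ is a.e.\ constant) is harmless but unnecessary: the direct argument already establishes $J(c,x)\neq\{0\}$ pointwise for $\beta^{\tau,\X}$-a.e.\ $(c,x)$.
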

\begin{proof}
We will show that for $\beta^{\X}$-a.e. $(c,x)$ and every
$\e>0$, the stabilizer of $\sigma(c,x)$
contains a nonzero vector of norm at most $\e$. 

The drift argument (Proposition \ref{prop:
  7.1}) gives us that for $\beta^{\X}$-a.e. $(c,x) \in B^{\tau,\X}$ and every
$\e>0$, there exists a nonzero vector $v \in \R^{m+n}$ of norm at most $\e$ and an
element $(c',x')$ such that 
$$
\sigma(\Phi_v(c',x')) = \sigma(c',x') = \sigma(c,x). 
$$
By applying Lemma \ref{lem: 6.11} to this element $(c',x')$, we find 
$$
t_{v*}\sigma(\Phi_v(c',x')) = \sigma(c',x')
$$
and hence 
$$
t_{v*}\sigma(c,x) = \sigma(c,x).
$$
which gives that $t_{v*}\sigma_0(c,x) = \sigma_0(c,x).$
The vector $v$ is indeed in the stabilizer of $\sigma(c,x)$. The stabilizer is non-discrete and closed. It thus contains a nonzero linear subspace of $\R^{m+n}$. 
\end{proof}

For $\beta$-a.e. $b \in B$, and $\mu_b$-a.e. $x \in X$, we denote by
$\sigma_{b,x} \in \MM(\R^{m+n})$ the conditional measure at $x$ of $\mu_b$
for the action on $X$ of $\R^{m+n}$ as a subgroup of $G$, and we denote $V_{b,x} \subset \R^{m+n}$ the
connected component of the stabilizer of $\sigma_{b,x}$ in $\R^{m+n}$. We then have the analogue of Proposition 7.5 in \cite{BQ1}.

\begin{prop}\label{prop: 7.5}
For $\beta^X$-a.e. $(b,x) \in B^X$, we have $\sigma_{b,x} \simeq \pi_G(b_{0})_* \sigma_{T^X(b,x)}, \,
V_{b,x} = \pi_G(b_0)\left(V_{T^X(b,x)} \right)$ and $V_{b,x} \neq 0.$ 
\end{prop}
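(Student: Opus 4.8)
The plan is to establish the three assertions in turn: the nontriviality $V_{b,x}\ne\{0\}$ by transporting Proposition \ref{prop: 7.4} through the suspension, and the equivariance relations $\sigma_{b,x}\simeq\pi_G(b_0)_*\sigma_{T^\X(b,x)}$ and $V_{b,x}=\pi_G(b_0)(V_{T^\X(b,x)})$ from the functoriality of conditional measures under equivariant Borel isomorphisms (Proposition \ref{prop: 4.2}).

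First I would identify the map $\sigma$ of Lemma \ref{lem: 6.11} on $B^{\tau,\X}$ with the fibrewise conditional measures $\sigma_{b,x}$. Recalling that $\beta^{\tau,\X}$ is the measure $\int_{B^\tau}\delta_c\otimes\mu_c\,d\beta^\tau(c)$ and that the horocycle action $\Phi_v(b,k,x)=(b,[I_{m+n},a_k v]x)$ fixes the $B^\tau$-coordinate, the conditional measure $\sigma_0((b,k),x)$ of $\beta^{\tau,\X}$ along $\Phi$ equals the conditional measure of $\mu_b$ at $x$ along the reparametrised $\R^{m+n}$-action $v\mapsto[I_{m+n},a_k v]x$. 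Since that action is the standard one precomposed with the linear automorphism $a_k$, uniqueness in Proposition \ref{prop: 4.2} yields $\sigma_0((b,k),x)\simeq (a_{-k})_*\sigma_{b,x}$ for $\beta^{\tau,\X}$-a.e.\ point; using $t_v\circ a_{-k}=a_{-k}\circ t_{a_k v}$ this gives $J((b,k),x)=a_k^{-1}(V_{b,x})$.

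Now Proposition \ref{prop: 7.4} gives $J(c,x)\ne\{0\}$ for $\beta^{\tau,\X}$-a.e.\ $(c,x)$, and since $a_k$ is invertible this forces $V_{b,x}\ne\{0\}$ for $\beta^{\tau,\X}$-a.e.\ $((b,k),x)$. As $V_{b,x}$ is independent of $k$ and $\beta^\tau$ is the suspension of $\beta$ over the roof $\tau$, Fubini over $k\in[0,\tau(b))$ upgrades this to: for $\beta$-a.e.\ $b$, for $\mu_b$-a.e.\ $x$, $V_{b,x}\ne\{0\}$ --- that is, $\beta^{\X}$-a.e.\ $(b,x)$.

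For the equivariance relations I would use $\mu_b=b_{0*}\mu_{Tb}$ from \eqref{Invariance under T of mu b} together with the identity $b_0[I_{m+n},w]=[I_{m+n},\pi_G(b_0)w]\,b_0$ in $G$. The latter shows that $y\mapsto b_0 y$ is a $\pi_G(b_0)$-equivariant Borel isomorphism $(\X,\mu_{Tb})\to(\X,\mu_b)$ sending $b_0^{-1}x$ to $x$, so the equivariance \eqref{eq: 4.3} of Proposition \ref{prop: 4.2} gives $\sigma_{b,x}\simeq\pi_G(b_0)_*\sigma_{Tb,b_0^{-1}x}=\pi_G(b_0)_*\sigma_{T^\X(b,x)}$ for $\beta^{\X}$-a.e.\ $(b,x)$; taking connected stabilisers, and using that a linear automorphism $\psi$ satisfies $t_{v*}\psi_*\sigma=\psi_*\sigma$ iff $\psi^{-1}v$ stabilises $\sigma$, gives $V_{b,x}=\pi_G(b_0)(V_{T^\X(b,x)})$. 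The step I expect to need the most care is this last one: the automorphism $\pi_G(b_0)$ intertwining the $\R^{m+n}$-actions depends on $b$, so one is really transporting conditional measures across the whole family $\{\mu_b\}_b$ along a cocycle, and one must invoke the precise ``for every discrete section, for $\lambda_\Sigma$-a.e.\ base point'' form of Proposition \ref{prop: 4.2} to make all the identifications hold on a single $\beta$-conull set of $b$; the same, more mildly, is needed in the first step to match $\sigma_0$ with the reparametrised fibrewise conditional measure.
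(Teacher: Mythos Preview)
Your proposal is correct and follows essentially the same route as the paper's proof. The paper argues tersely: it records the intertwining relation $T^\X(b,[I_{m+n},v]x)=(Tb,[I_{m+n},\pi_G(b_0^{-1})v]\,b_0^{-1}x)$ together with $\mu_{Tb}=(b_0^{-1})_*\mu_b$ to obtain the first two equalities, and then invokes Proposition~\ref{prop: 7.4} via the identity $V_{b,x}=a_k\cdot J(b,k,x)$ for the nontriviality --- precisely your $J((b,k),x)=a_k^{-1}(V_{b,x})$. Your write-up is simply a more careful unpacking of these same steps, including the fibrewise identification of $\sigma_0$ with the reparametrised $\sigma_{b,x}$ and the Fubini passage from $\beta^{\tau,\X}$-a.e.\ to $\beta^\X$-a.e., and your flagged measurability concern (aligning the identifications on a single conull set as $b$ varies) is a legitimate technical point that the paper suppresses.
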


\begin{proof}
For $\beta$-a.e. $b
\in B$, we have $\mu_{Tb} = (b^{-1}_0)_* \mu_b$ and, for every $x \in X$ and
$v \in \R^{m+n}$, 
$$
T^X(b, [I_{m+n},v]x) = (Tb, [I_{m+n},\pi_G(b^{-1}_0) v]b^{-1}_0x).
$$
This gives the first equality and the second equality is immediate. The fact that $V_{b,x}$ is nonzero follows from Proposition \ref{prop:
  7.4} and the equality, for $\beta^{\X}$-a.e. $(b,x) \in
B^{\X}$, $V_{b,x} = a_{k}.(J(b,k,x))$. 
\end{proof}

The disintegration of $\beta^X$ along the map $(b,x) \mapsto (b,
V_{b,x})$ can be written as 
$$
\mu_b = \int_X \mu_{b,x} d\nu_b(x)
$$
where, for $\beta^X$-a.e. $(b,x) \in B^X$, the probability measure
$\mu_{b,x}$ on $X$ is supported on the fiber $\{x' \in X: V_{b,x'} =
V_{b,x} \}$. 

\begin{prop}\label{prop: 7.6}
For $\beta^{\tau,X}$-a.e. $(b,x) \in B^X$, the probability measure
$\nu_{b,x}$ is $V_{b,x}$-invariant and has the equivariance property
$\nu_{b,x} = b_{0*} \nu_{Tb, b^{-1}_0x}. $ 
\end{prop}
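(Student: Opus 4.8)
The plan is to deduce both assertions from the characterization of the disintegration $\mu_b = \int_X \mu_{b,x}\, d\nu_b(x)$ along the map $(b,x) \mapsto (b, V_{b,x})$ together with Proposition \ref{prop: 7.5}, following the pattern of \cite[Prop.~7.6]{BQ1}. First I would establish the equivariance $\nu_{b,x} = b_{0*}\nu_{Tb, b_0^{-1}x}$. The key input is the relation $\mu_b = b_{0*}\mu_{Tb}$ from \eqref{Invariance under T of mu b}, combined with the fact, recorded in Proposition \ref{prop: 7.5}, that $V_{b,x} = \pi_G(b_0)(V_{T^X(b,x)})$, i.e., the level sets of the map $x \mapsto V_{b,x}$ are carried to the level sets of $x' \mapsto V_{Tb,x'}$ by the affine map $x' \mapsto b_0^{-1}x'$ on $X$ (acting on the $\R^{m+n}$-factor through $\pi_G(b_0^{-1})$). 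Pushing the disintegration of $\mu_{Tb}$ forward by $b_0$ therefore yields a disintegration of $\mu_b = b_{0*}\mu_{Tb}$ along the same partition into level sets of $V_{b,\cdot}$; by uniqueness (a.e.) of disintegrations, the fiber measures must agree, which gives $\nu_{b,x} = b_{0*}\nu_{Tb, b_0^{-1}x}$ for $\beta^X$-a.e. $(b,x)$.

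Next I would prove the $V_{b,x}$-invariance of $\nu_{b,x}$. This is the substantive point and is exactly the place where the argument of Benoist–Quint is invoked: it is a consequence of Proposition \ref{prop: 4.3} applied to the Borel action of $\R^{m+n}$ on $X$. Indeed, by Proposition \ref{prop: 4.3}, if one disintegrates a probability measure (here $\mu_b$, for fixed $b$) along the map $x \mapsto V_{b,x}$ — where $V_{b,x}$ is the connected component of the stabilizer of the conditional measure $\sigma_{b,x}$ of $\mu_b$ for the $\R^{m+n}$-action — then the resulting fiber measures are $V_{b,x}$-invariant for a.e. $x$. One must check that the hypotheses of Proposition \ref{prop: 4.3} hold: the action of $\R^{m+n}$ on $X$ has discrete stabilizers (each affine lattice has a discrete stabilizer in $\R^{m+n}$ since it contains a lattice shift), and $\mu_b$ is a Borel probability measure. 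Applying this for $\beta$-a.e.\ fixed $b$ and then integrating over $b$ gives the invariance statement for $\beta^X$-a.e.\ $(b,x)$. The non-triviality $V_{b,x} \neq 0$, needed so that the statement has content, is already supplied by Proposition \ref{prop: 7.5}.

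The main obstacle I anticipate is not any single hard estimate but rather the careful bookkeeping required to align the two disintegrations in the equivariance step: one must verify that ``disintegrating $\mu_{Tb}$ along $x' \mapsto V_{Tb,x'}$ and pushing forward by $b_0$'' genuinely produces ``the disintegration of $\mu_b$ along $x \mapsto V_{b,x}$,'' which hinges on the compatibility $V_{b, b_0 x'} = \pi_G(b_0) V_{Tb, x'}$ (a restatement of Proposition \ref{prop: 7.5}) holding on a set of full measure, and on the measurable-selection/uniqueness framework for disintegrations being invariant under Borel isomorphisms of the base. Since $b_0$ acts on $X$ as an affine map, there is a mild subtlety that the $\R^{m+n}$-action is only intertwined with conjugation by $\pi_G(b_0)$, not by $b_0$ itself; but this is precisely what the second equality in Proposition \ref{prop: 7.5} accounts for, so the argument goes through verbatim as in \cite{BQ1}.
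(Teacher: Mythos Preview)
Your proposal is correct and follows essentially the same approach as the paper: the $V_{b,x}$-invariance is obtained from Proposition~\ref{prop: 4.3}, and the equivariance from the relation $\mu_b = b_{0*}\mu_{Tb}$ together with Proposition~\ref{prop: 7.5} and uniqueness of disintegration. The only cosmetic difference is that you treat the two assertions in the opposite order and spell out more of the bookkeeping than the paper's two-line proof.
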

\begin{proof}
The first assertion follows from Proposition \ref{prop: 4.3}. 

The second assertion follows from the equality $\mu_b =
b_{0*}\mu_{Tb}$, from Proposition \ref{prop: 7.5}, and from the
disintegration of measures. 
\end{proof}

\section{Proof of Theorem \ref{Main Random walk}}
\label{Proof of Theorem Main Random walk}
This section aims to prove Theorem \ref{Main Random walk}.

\begin{defn}
    We will call a probability measure $\alpha$ on $\T^{m+n}= \R^{m+n}/\Z^{m+n}$ \emph{homogeneous} if there is a connected closed subgroup $H$ of $\T^{m+n}$ containing $\supp(\alpha)$ such that $\alpha$ equals Haar measure on $H$.
\end{defn}
\begin{defn}
    A measure $\alpha$ will be called a \emph{translated homogeneous toral measure} if $\alpha= g_* \alpha'$ for some $g \in G$, where $\alpha'$ equals the measure obtained by the pushforward of a homogeneous measure $\rho''$ on $\T^{m+n}$, under map $w \mapsto [I_{m+n},w]\Gamma$.
\end{defn}

\begin{prop}
\label{Prop: Decomposition implies Haar}
    Suppose $\mu$ is a measure on $\X$ which is $\nu$-ergodic and satisfies $(\pi_{\X})_* \mu_{\X} = \mu_{\X'}$. Assume that there exists a probability measure $\Theta$ on $\Prob(\X)$, the set of Radon probability measures on $\X$ such that
    \begin{itemize}
        \item $\mu= \int_{\Prob(\X)} \rho \, d\Theta(\rho)$.\\
        \item For $\Theta$-a.e. $\rho$ is a translated homogenous toral measure.\\
        \item The measure $\Theta$ is $\nu$-stationary.
    \end{itemize}
    Then, $\mu= \mu_{\X}$.
\end{prop}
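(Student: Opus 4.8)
The strategy is to analyze the subgroup data attached to the decomposition and push it through the random-walk dynamics. For $\Theta$-a.e. $\rho$, write $\rho = g_\rho{}_* \rho'_\rho$ where $\rho'_\rho$ comes from a homogeneous measure on $\T^{m+n}$ supported on a connected closed subgroup $H_\rho \leq \T^{m+n}$; lifting to $\R^{m+n}$, this gives a rational subspace $W_\rho \subset \R^{m+n}$, and $\rho$ is invariant under the connected abelian group $g_\rho[I_{m+n}, W_\rho]g_\rho^{-1}$ acting on $\X$. First I would record that, since $\pi_*\mu = \mu_{\X'}$, the projections $\pi(g_\rho)$ must be ``spread out'' — more precisely, the pushforward $\pi_*\Theta$ disintegrates $\mu_{\X'}$ into point masses on $\X'$ (each $\rho$ projecting to a single point $\pi(g_\rho)\Gamma' \in \X'$, because a translated homogeneous toral measure projects to a Dirac mass downstairs), so $\pi_*\mu = \mu_{\X'}$ forces the law of $\pi(g_\rho)\Gamma'$ under $\Theta$ to be $\mu_{\X'}$ itself.

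Next I would exploit $\nu$-stationarity of $\Theta$ together with the special form \eqref{eq: def e} of the elements of $E$. Stationarity of $\Theta$ means $\Theta = \int_E g_e{}_*\Theta \, d\nu(e)$ in $\Prob(\Prob(\X))$, so the assignment $\rho \mapsto W_\rho$ (a measurable map to the Grassmannian of rational subspaces, or rather to $\Gr(\R^{m+n})$ via $\rho \mapsto \pi_G(g_\rho) W_\rho$) transforms equivariantly under the random walk on the $\SL_{m+n}(\R)$-part. Because each $e^{-1}$ acts on $\R^{m+n}$ by $\mathrm{diag}(\rho_e O_e, \rho_e^{-m/n} I_n)$ with $\rho_e < 1$, the only subspaces of $\R^{m+n}$ that can be stable (in an appropriate stationary sense) under the whole semigroup generated by $E$ are $\{0\}$, $\R^m \times \{0\}^n$, $\{0\}^m \times \R^n$, and $\R^{m+n}$: this is where the hypothesis that $E$ is not virtually contained in $\SL_{m+n}(\R)$ enters — it rules out the map $\rho \mapsto W_\rho$ being essentially constant with value $\{0\}$ after conjugation, because a genuinely $\R^{m+n}$-translated piece must appear. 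I expect to conclude that for $\Theta$-a.e. $\rho$, the invariance group of $\rho$ contains a conjugate of one of the nontrivial standard pieces, and in fact — combining the contraction in the first $m$ coordinates with stationarity — that it contains the full $\R^m \times \{0\}^n$ translation direction, i.e. $\rho$ is invariant under the horospherical-type subgroup $U^- = \{[I_{m+n}, (w,0)] : w \in \R^m\}$ (up to the conjugation by $g_\rho$, which however must be trivial on this direction for the measure to remain a probability measure after averaging).

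Finally, with each $\rho$ invariant under translations in the $\R^m\times\{0\}^n$ direction and $\Theta$ recording that the $\X'$-projections are distributed according to $\mu_{\X'}$, I would assemble $\mu$ as an integral over $\X'$ of translates of the $U^-$-invariant fiber measures and invoke the classification of measures on $\X$ that are invariant under $U^-$ and project to $\mu_{\X'}$ — equivalently, run a short unipotent-invariance/entropy or a direct Fourier argument on the torus fibers: a $U^-$-invariant probability measure on $\X$ projecting to $\mu_{\X'}$, whose fiberwise conditionals are Haar on subtori, must have those subtori equal to the whole fiber $\mu_{\X'}$-a.e. (any smaller rational subtorus would violate ergodicity of $\mu$ under $\nu$, since the random walk moves the subtorus around by the $\SL_{m+n}(\Z)$-part which acts ergodically on the relevant Grassmannian-type data over $\mu_{\X'}$), hence $\mu = \mu_{\X}$. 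The main obstacle I anticipate is the middle step: showing that stationarity plus the contracting form of $\nu$ genuinely forces the full $\R^m\times\{0\}^n$ invariance (not merely some nonzero invariance), and simultaneously forcing the conjugating elements $g_\rho$ to be compatible with a global Haar structure rather than producing a nontrivial self-similar decomposition of $\mu_{\X}$ of the kind warned about in the introduction — this is exactly the place where the $\pi_*\mu = \mu_{\X'}$ hypothesis must be used decisively, and I would handle it by a careful analysis of how the norm-normalized translation vectors behave under iteration of the random walk, in the spirit of Lemma \ref{lem: Condition on Fractal}.
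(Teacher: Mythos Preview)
Your plan diverges from the paper's argument, and its central mechanism has a real gap. The paper never attempts to classify the invariance directions $\pi_G(g_\rho)W_\rho$ on a Grassmannian, nor does it aim for partial invariance under $\R^m\times\{0\}^n$ and then upgrade. Instead it uses a much simpler observation you pass over: the underlying rational subspace $W_\rho$ (equivalently the connected subgroup of $\T^{m+n}$) is a $G$-orbit invariant of $\rho$, and connected closed subgroups of $\T^{m+n}$ form a \emph{countable} set. Ergodicity of $\Theta$ on this countable quotient then pins $\Theta$ to a single $G$-orbit $G\alpha\simeq G/G_\alpha$ with one fixed rational $V_\alpha$. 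If $V_\alpha=\R^{m+n}$, every $\rho$ is fiberwise Haar and $\pi_*\mu=\mu_{\X'}$ gives $\mu=\mu_\X$ at once. If $V_\alpha\subsetneq\R^{m+n}$, the paper argues that for Haar-a.e.\ $g\in G$ the $\nu$-walk $(b_p\cdots b_1\,gG_\alpha)_p$ escapes to infinity in $G/G_\alpha$, whereas for $\Theta$-a.e.\ starting point the walk equidistributes with respect to $\Theta$ and hence does not diverge; combining these with $\pi_*\mu=\mu_{\X'}$ yields the contradiction $1=\mu(Z\Gamma)=\Theta(ZG_\alpha)=0$.

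Two of your steps do not go through as written. The four-subspace classification is false: every product $V_1\times V_2$ with $V_1\subset\R^m$ fixed by all $O_e$ and $V_2\subset\R^n$ is invariant under each $\pi_G(e)$, and in general the $\pi_G(\nu)$-stationary measures on $\Gr(k,m+n)$ can live on many strata, so this route to isolating $\R^m\times\{0\}^n$ is at best incomplete. More seriously, your final step invokes an ``$\SL_{m+n}(\Z)$-part'' of the random walk acting ergodically on subtorus data, but the walk has no $\SL_{m+n}(\Z)$-part --- the linear pieces $\pi_G(e)$ lie in $\SL_{m+n}(\R)$ --- so there is no mechanism of that kind available to promote partial fiber invariance to full. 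The paper's transience argument on $G/G_\alpha$ is exactly what substitutes for this missing step, and $\pi_*\mu=\mu_{\X'}$ enters there not through translation-norm estimates in the spirit of Lemma~\ref{lem: Condition on Fractal} (as you anticipate) but simply by identifying $\mu(Z\Gamma)$ with a full-measure set in $\X'$.
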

\begin{proof}
We can assume that $\Theta$ is $\nu$-ergodic, since $\mu$ is $\nu$-ergodic. Consider the set $M(\X) \subset \Prob(\X)$ of all translated homogeneous toral measures on $\X$ and the set $G \setminus M(\X)$. Since the subgroups of $\T^{m+n}$ are countable, so is $G \setminus M(\X)$. The pushforward of $\Theta$ under the natural map $M(\X) \rightarrow G \setminus M(\X)$, say $\Bar{\Theta}$, is a $\nu$-stationary ergodic probability measure on a countable set. Since the action of $G$ on $G \setminus M(\X)$ is trivial, this implies that $\Bar{\Theta}$ is supported on a single point (see \cite[Lem.~8.3]{BQ1}).

    Thus, $\Theta$ is supported on a unique orbit $G \alpha$, where $\alpha$ can be assumed to be the pushforward of a homogeneous measure $\alpha'$ on $\T^{m+n}$ under the map $w \mapsto [I_{m+n},w]\Gamma$ and the pre-image of the subgroup $\text{supp}(\alpha')$ under the natural map $\R^{m+n} \rightarrow \R^{m+n}/ \Z^{m+n}$ equals $V_{\alpha}$, which is a rational subspace of $\R^{m+n}$.

    Clearly, $G \alpha \simeq G/G_{\alpha}$ and we have
    \begin{align*}
        G_\alpha &= \{g \in G: g_* \alpha = \alpha\} \\
        &= \{[A, w] \in \SL_{m+n}(\Z) \times \R^{m+n}: A.V_\alpha  = V_\alpha, w \in V_\alpha \}.
    \end{align*} 

    Now, we have two cases to consider:\\ 
    
    {\bf Case 1: $V_\alpha= \R^{m+n}$.} Then, the measure $\alpha'$ equals Haar measure on $\T^{m+n}$. Using the fact that $\mu= \int_{G \alpha} \rho \, d\Theta(\rho)$, this implies that $\mu$ is invariant under the action of $\R^{m+n}$. Since $\pi_*(\mu) = \mu_{\X'}$, this implies that $\mu = \mu_{\X}$.\\

    {\bf Case 2: $V_{\alpha} \subsetneq \R^{m+n}$.} Then we have that $G_\alpha$ is a discrete subgroup which is not a lattice in $G$. Let $Z \subset G$ denote the set of all $[A,w] \in G$ such that for $\nu^{\otimes \N} $-a.e. $(b_j)_j \in B$, we have that the sequence $(b_j \cdots b_1 [A,w] G_{\alpha})_j$ diverges to infinity in $G/ G_\alpha$. Then we have that  $m_G(Z^c)=0$. Clearly, $Z$ is invariant under the left action of $\R^{m+n}$. By removing a set of measure zero, we may assume that $Z$ is invariant under the right action of $\Gamma$. Then, $Z$ is right invariant under the action of $G_\alpha$ as well.

    Note that the set $Z.G_\alpha$ has measure zero with respect to $\Theta$, since for $\Theta$-a.e. $x \in G/G_\alpha \simeq G \alpha$, the random walk equidistributes with respect to $\Theta$. Thus,
    \begin{align*}
        1 &= \mu_{\X'}(\pi_G(Z.\Gamma)) \\
        &= \mu(Z.\Gamma) \\
        &= \int_{G.\alpha} \rho(Z.\Gamma) \, d\Theta(\rho) \\
        &= \int_{G/G_\alpha} ([A,w]_* \alpha)(Z.\Gamma) \, d\Theta([A,w]G_\alpha) \\
        &= \int_{G/G_\alpha} {1}_{Z.\Gamma}([A,w]\Gamma) \, d\Theta([A,w]G_\alpha) \\
        &= \int_{G/G_\alpha} {1}_{Z.G_\alpha}([A,w]G_{\alpha}) \, d\Theta([A,w]G_\alpha) \\
        &= 0.
    \end{align*}
    This means that Case 2 is not possible and proves the Proposition.
\end{proof}

\begin{proof}[Proof of Theorem \ref{Main Random walk}]
    We may assume that $\mu$ is a $\nu$-ergodic stationary measure. For $\beta^X$-a.e. $(b,x) \in B^X$, the decomposition of $\mu_{b,x}$
into $V_{b,x}$-ergodic components can be written simultaneously
in the form 
\begin{equation}
    \label{eq:8.1}
    \mu_{b,x} = \int_X \zeta(b,x') d\mu_{b,x}(x'),
\end{equation}
where $\zeta: B^X \to \Prob(\X)$ is a $\cB^X$-measurable map such that, for
$\beta^X$-a.e. $(b,x) \in B^X$, the restriction of $\zeta$ to the
fiber $\{(b,x'): V_{b,x'} = V_{b,x}\}$ is constant along
$V_{b,x}$-orbits. 

It is easy to see that for $\beta^\X$-a.e. $(b, [A,v]\Gamma)$, we have that $V_{b,[A,v]\Gamma}$ must be of the form $A.V'_{b,[A,v]\Gamma}$, where $V'_{b,[A,v]\Gamma}$ is a rational subspace of $\R^{m+n}$. Hence, for $\beta^\X$-a.e. $(b, x)$, we have that $\zeta(b,x)$ is of the form $([A_{\rho},v_\rho])_{*} \rho'$, where $\rho'$ is a measure on $\X$, obtained by the pushforward of a homogeneous measure $\rho''$ on $\R^{m+n}/ \Z^{m+n}$, under map $w \mapsto [I_{m+n},w]\Gamma$. 

The uniqueness of the ergodic decomposition, and Propositions
\ref{prop: 7.5} and \ref{prop: 7.6}, prove that, for
$\beta^X$-a.e. $(b,x) \in B^X$, we have 
\begin{align}
  \label{eq: 8.2}  
  \zeta(b,x) = (b_0)_* \zeta(T^X(b, x)). 
\end{align}
This implies that the image probability measure $\Theta =
\zeta_* \beta^X$ is therefore a $\mu$-stationary probability measure
on $\Prob(\X)$ (see e.g. \cite[Lem. 3.2(e)]{BQ1}). 

Also, note that
\begin{align*}
    \mu &= \int_{B} \mu_{b} \, d\beta(b) \\
    &=  \int_{B} \int_{\X} \mu_{b,x} \, d\mu_b(x)  d\beta(b) \\
    &=  \int_{B} \int_{\X} \int_{\X} \zeta(b,x')\, d \mu_{b,x}(x') d\mu_b(x)  d\beta(b) \\
    &= \int_{B} \int_{\X} \zeta(b,x')\,  d\mu_b(x')  d\beta(b) \\
    &= \int_{\Prob(\X)} \rho \, d\Theta(\rho).
\end{align*}

Thus, $\mu$ satisfies the conditions of Proposition \ref{Prop: Decomposition implies Haar}. Hence by Proposition \ref{Prop: Decomposition implies Haar} we get that $\mu= \mu_{\X}$. This proves the Theorem.
\end{proof}

\section{Random Genericity: Proof of Theorem \ref{Random Genericity}}
\label{Proof of Theorem Random Genericity}
\begin{prop}\label{prop: Sparse Equidistribution}
    Fix $(t_i)_i$ a sequence in the closed interval $[c,d]$, for some $0<c<d<\infty$.
    Fix $x_0 \in \X'$ which is Birkhoff generic. Suppose that the measure $\mu$ is a weak limit of the sequence of probability measures 
    \begin{align}
    \label{eq:r3}
        \frac{1}{p} \sum_{i=1}^p \delta_{a_{t_1 + \cdots + t_i}x_0},
    \end{align}
    along a subsequence. Then $\mu$ is a probability measure and we have 
    $$
    \int_{[0,c]} (a_{-l})_*\mu \, dl \ll \mu_{\X'}  \ll \int_{[0,d]} (a_{-l})_*\mu \, dl,
    $$
    where $\alpha \ll \beta$ means that the measure $\alpha$ is absolutely continuous w.r.t $\beta$.
\end{prop}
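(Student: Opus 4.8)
The plan is to sandwich the Birkhoff average of $x_0$ between two quantities built from the discrete average $\lambda_p:=\frac1p\sum_{i=1}^p\delta_{a_{s_i}x_0}$, where $s_0:=0$ and $s_i:=t_1+\cdots+t_i$. First I would split $[0,s_p]$ along $s_0<s_1<\cdots<s_p$ and substitute $t=s_i-l$ on $[s_{i-1},s_i]$ (so $l$ runs over $[0,t_i]$), which gives the exact identity
\[
\int_0^{s_p}\delta_{a_tx_0}\,dt=\sum_{i=1}^p\int_0^{t_i}(a_{-l})_*\delta_{a_{s_i}x_0}\,dl .
\]
Since each integrand is a positive measure and $c\le t_i\le d$, dividing by $s_p$ yields
\[
\frac{p}{s_p}\int_0^{c}(a_{-l})_*\lambda_p\,dl\ \le\ \frac1{s_p}\int_0^{s_p}\delta_{a_tx_0}\,dt\ \le\ \frac{p}{s_p}\int_0^{d}(a_{-l})_*\lambda_p\,dl ,
\]
with $p/s_p\in[1/d,1/c]$. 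Since $s_p\ge pc\to\infty$, the middle term tends to $\mu_{\X'}$ by Birkhoff genericity of $x_0$; it then remains to control the outer terms, for which I would pass to a further subsequence so that $p/s_p\to\theta\in[1/d,1/c]$.

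The hard part will be showing that $\mu$ is a probability measure (no escape of mass); once this is known, $\lambda_p\to\mu$ in the weak topology against $C_b$, and the maps $\rho\mapsto\int_0^{c}(a_{-l})_*\rho\,dl$ and $\rho\mapsto\int_0^{d}(a_{-l})_*\rho\,dl$ are weakly continuous (Fubini plus dominated convergence applied to $\rho\mapsto\int f\circ a_{-l}\,d\rho$). For non-escape I would fix a proper continuous height $\mathrm{ht}$ on $\X'$ with $\{\mathrm{ht}\le R\}$ compact (Mahler's criterion); since $\{a_{-l}:l\in[0,d]\}$ is a fixed compact set, there is $C\ge 1$ with $\mathrm{ht}(a_{-l}y)\ge C^{-1}\mathrm{ht}(y)$ for all $y$ and all $l\in[0,d]$, so $\mathrm{ht}(a_{s_i}x_0)>CR$ forces $\mathrm{ht}(a_tx_0)>R$ for every $t\in[s_{i-1},s_i]$. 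Counting the long gaps (each of length $\ge c$) this gives
\[
\frac1p\,\#\{i\le p:\mathrm{ht}(a_{s_i}x_0)>CR\}\ \le\ \frac{d}{c}\cdot\frac1{s_p}\,\bigl|\{t\in[0,s_p]:\mathrm{ht}(a_tx_0)>R\}\bigr| ,
\]
and the $\limsup$ of the right-hand side as $p\to\infty$ is at most $\frac dc\,\mu_{\X'}(\{\mathrm{ht}>R-1\})$ by Birkhoff genericity (comparing $\mathbf 1_{\{\mathrm{ht}\le R-1\}}$ with a compactly supported bump). This tends to $0$ as $R\to\infty$, so $\{\lambda_p\}$ is tight and $\mu(\X')=1$.

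Finally I would pass to the limit in the sandwich: the outer terms converge to $\theta\int_0^{c}(a_{-l})_*\mu\,dl$ and $\theta\int_0^{d}(a_{-l})_*\mu\,dl$, and since inequalities of positive measures are preserved under weak limits,
\[
\theta\int_0^{c}(a_{-l})_*\mu\,dl\ \le\ \mu_{\X'}\ \le\ \theta\int_0^{d}(a_{-l})_*\mu\,dl .
\]
Because $0<\theta<\infty$ and domination of positive measures implies absolute continuity, this is exactly $\int_{[0,c]}(a_{-l})_*\mu\,dl\ll\mu_{\X'}\ll\int_{[0,d]}(a_{-l})_*\mu\,dl$. Everything except the tightness/non-escape step is routine bookkeeping with positive measures and weak limits.
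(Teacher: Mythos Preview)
Your argument is correct and follows essentially the same approach as the paper: partition the Birkhoff integral $\int_0^{s_p}\delta_{a_tx_0}\,dt$ along the times $s_0<\cdots<s_p$, use $c\le t_i\le d$ to sandwich against the discrete average, and deduce tightness from the same ``thickening by $\{a_{-l}:l\in[0,d]\}$'' idea. The only organizational difference is that the paper, instead of passing to a subsequence where $p/s_p\to\theta$ and working with measure-valued inequalities on $\X'$, lifts to $\X'\times[c,d]$ and extracts a joint weak limit $\tilde\mu$ of $\frac1{p_k}\sum_i\delta_{(a_{s_i}x_0,\,t_i)}$, then tests against $F(x,t)=\int_{-t}^0 f(a_lx)\,dl$; your direct inequality $\frac{p}{s_p}\int_0^c(a_{-l})_*\lambda_p\,dl\le\frac1{s_p}\int_0^{s_p}\delta_{a_tx_0}\,dt\le\frac{p}{s_p}\int_0^d(a_{-l})_*\lambda_p\,dl$ is a slightly more elementary packaging of the same computation and avoids the auxiliary product space.
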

\begin{proof}
Let $\mu$ be a weak limit of the left-hand side of \eqref{eq:r3} along a subsequence, say $(p_k)_k$.

    First of all, let us show that $\mu$ is a probability measure. Let $\e>0$ be arbitrary. We will show that there is a compact subset $K_\e \subset\X'$ such that $\mu(K_\e)> 1- \e$. This will prove that $\mu$ is a probability measure. To see this, note that since $x_0$ is Birkhoff generic, there exists a compact subset $K_\e' \subset \X'$ and $T_\e$ such that for all $T> T_\e$ we have
    \begin{align}
        \label{eq:r1}
        \frac{1}{T} \int_{0}^T \delta_{a_t x}((K_\e')^c) < \frac{c}{d}\e.
    \end{align}
    Define $K_\e= \bigcup_{l\in [0,d]} a_{l} K_\e'$. Then for $p_k> T_e/c$ we have $t_1 + \cdots + t_{p_k} > T_\e$, and 
    \begin{align*}
        \frac{1}{p_k} \sum_{i=1}^{p_k} \delta_{a_{t_1 + \cdots + t_i}x_0}(K_\e^c) &\leq \frac{d}{t_1 + \cdots + t_{p_k}} \sum_{i=1}^{p_k} \left(\frac{1}{t_i} \int_{t_1+ \cdots + t_{i-1}}^{t_1 + \cdots + t_i} \delta_{a_{l} x}((K_\e')^c)  \right) \\
        &\leq \frac{c}{d} \frac{1}{t_1 + \cdots + t_{p_k}} \int_{0}^{t_1 + \cdots + t_{p_k}} \delta_{a_{l} x}(K_\e) \\
        &\leq \e.
    \end{align*}
    This proves that $\mu$ is a probability measure.

    Now, by further passing to a subsequence of $(p_k)_k$, we may assume the existence of a weak limit of the sequence of the measures on $\X' \times [c,d]$ 
    $$
     \frac{1}{p_k}\sum_{i=1}^{p_k} \delta_{a_{t_1+ \cdots + t_i}x_0,t_i},
    $$
    say, $\Tilde{\mu}$. 
    Since the pushforward of $\Tilde{\mu}$ along the natural projection map $\X' \times [a,b] \rightarrow \X'$ must equal $\mu$, we get that $\Tilde{\mu}$ is a probability measure.

    Fix a function $f \in \C_c(\X')$. Define $F: \X' \times \R \rightarrow \Complex$ as
    $$
    F(x,t)= \int_{-t}^{0} f(a_l x) \, dl.
    $$
    Note that $|F(x,t)| \leq \|f\|_{\infty} .d$, hence $F$ is a continuous bounded function on $\X' \times [c,d]$. Also, define $\phi:\X' \times [c,d] \rightarrow \R $ as $\phi(x,l)=l$. Then, we have
    \begin{align*}
         \int_{\X' \times \R} \int_{-t}^0 f(a_lx)\, dl d\Tilde{\mu}(x,t) &=\int_{\X' \times \R} F(x,t)\, d\Tilde{\mu}(x,t) \\
        &= \lim_{k \rightarrow \infty} \frac{1}{p_k }\sum_{i=1}^{p_k}  F(a_{t_1+ \cdots + t_i}x_0,t_i) \\
        &= \lim_{k  \rightarrow \infty} \frac{1}{p_k }\sum_{i=1}^{p_k}  \int_{-t_i}^{0} f(a_{l+ t_1 + \cdots + t_i}x_0) \, dl \\
        &= \lim_{k  \rightarrow \infty} \frac{1}{p_k } \int_{0}^{t_1 + \cdots + t_{p_k} } f(a_{l}x_0) \, dl \\
        &= \lim_{k  \rightarrow \infty} \left(\frac{1}{t_1+ \cdots+ t_{p_k}} \int_{0}^{t_1 + \cdots + t_{p_k}}  f(a_{l}x_0) \, dl \right) \left( \frac{t_1 + \cdots + t_{p_k}}{p_k} \right) \\
        &=\left(\int_{\X'} f(x)\, d\mu_{\X'}(x) \right). \lim_{k \rightarrow \infty} \left( \frac{1}{p_k}\sum_{i=1}^{p_k} \phi(a_{t_1+ \cdots + t_i}x_0,t_i) \right)\\
        &= \left(\int_{\X'} f(x)\, d\mu_{\X'}(x) \right) \left( \int_{\X' \times [c,d]} \phi(x,l) \, d\Tilde{\mu}(l) \right),
    \end{align*}
    where in the last equality we have used the fact that $x_0$ is Birkhoff generic. Since, this holds for all $f \in \C_c(\X')$, we get that
    \begin{align*}
        \left( \int_{\R} t \, d\mu_{\R}(t) \right). \mu_{\X'} &= \int_{\X' \times [c,d]} \int_{-t}^{0} \delta_{a_l x} \, dl d\Tilde{\mu}(x,t)\\
        &\ll \int_{\X' \times [c,d]} \int_{-b}^0 \delta_{a_l x}  \, dl d\Tilde{\mu}(x,t)\\
        &= \int_{0}^d (a_{-l})_*\left( \int_{\X' \times \R}  \delta_{x}  \, d\Tilde{\mu}(x,t) \right)\, dl\\
        &= \int_{0}^d (a_{-l})_*\left( \int_{\X'}  \delta_{x}  \, d{\mu}(x) \right)\, dl\\
        &= \int_{0}^d (a_{-l})_*\mu \, dl. 
    \end{align*}
    Similarly, we have 
    \begin{align*}
        \left( \int_{\R} t \, d\mu_{\R}(t) \right). \mu_{\X'} &= \int_{\X' \times [c,d]} \int_{-t}^{0} \delta_{a_l x} \, dl d\Tilde{\mu}(x,t)\\
        &\gg \int_{\X' \times [c,d]} \int_{-c}^0 \delta_{a_l x}  \, dl d\Tilde{\mu}(x,t)\\
        &= \int_{0}^c (a_{-l})_*\left( \int_{\X' \times \R}  \delta_{x}  \, d\Tilde{\mu}(x,t) \right)\, dl\\
        &= \int_{0}^c (a_{-l})_*\left( \int_{\X'}  \delta_{x}  \, d{\mu}(x) \right)\, dl\\
        &= \int_{0}^c (a_{-l})_*\mu \, dl. 
    \end{align*}
    This proves the Proposition.
\end{proof}

\begin{proof}[Proof of Theorem \ref{Random Genericity}]
First of all, assume that $x \in \X'$ is Birkhoff generic. We will show that for $(\nu')^{\otimes \N}$ almost every $(b_p)_{p \in \N} \in (G')^{ \N} $, we have that
\begin{align}
    \frac{1}{N} \sum_{p =1}^{N} \delta_{b_p \cdots b_1 x} \rightarrow \mu_{\X'},
\end{align}
as $N \rightarrow \infty$.   
    
    {\bf Case 1.} Assume that $O_e= I_m$ and $O_e' = I_n$ for all $e \in E'$.
    By Breiman's law of large numbers (see e.g. \cite[Sec. 3.2]{BQbook}), we have a co-null subset $\mathcal{A}$ of $(E')^\N$ such that for all $b \in \mathcal{A}$, we have that any weak limit $\mu$ of 
\begin{align}
    \frac{1}{N} \sum_{p =1}^{N} \delta_{b_p \cdots b_1 x},
\end{align}
is $\nu'$-stationary.

Fix $b \in \mathcal{A}$, we claim that \eqref{eq:z2} holds for this $b$. To see this, note that for all $e \in E'$, we have $e= a_{-\log(\rho_e)}$ and due to compactness of $E$, for any $b \in B$, the sequence $(-\log(\rho_{b_i}))$ is bounded in some interval $[c,d]$ with $0< a<b< \infty$. Hence, by Proposition \ref{prop: Sparse Equidistribution} we get that any weak limit of   
\begin{align}
    \frac{1}{N} \sum_{p =1}^{N} \delta_{b_p \cdots b_1 x},
\end{align}
say $\mu$, is a probability measure and
$$
\int_{[0,c]} (a_{-l})_*\mu \, dl \ll \mu_{\X'}.
$$
Since the measure $\mu_{\X'}$ is $\nu$-ergodic and for all $l \in \R$ $(a_{-l})_*\mu$ is $\nu$-stationary, we get that $\mu_{\X'}= (a_{-l})_*\mu$ for a.e. $l$. Thus, $\mu= \mu_{\X'}$. Hence the result holds in this case.\\

{\bf Case 2.} Now assume that $O_e \neq I_m$ or $O_e' \neq I_n$ for some $e \in E$.
Let us define for $b \in B$ and $p \in \N$, the map $t_p: B \rightarrow \R$ as 
\begin{align}
\label{eq:def t n}
t_p(b)= -(\log\rho_{b_1} + \cdots + \log \rho_{b_p}).    
\end{align}
Then by Case 1, we have that for $(\nu')^{\otimes \N}$ almost every $(b_p)_{p \in \N} \in G'^{ \N} $, the following holds
\begin{align}
\label{eq:z3}
    \frac{1}{N} \sum_{p =1}^{N} \delta_{a_{t_p(b)} x} \rightarrow \mu_{\X'},
\end{align}
as $N \rightarrow \infty$. 

We will need the following proposition to proceed.
\begin{prop}[{\cite[Prop. 5.3]{SimmonsWeiss}}]
\label{SimmonsWeiss Prop 5.3}
    Let $Y$ be a locally compact second countable space, $H$ a locally compact second countable group acting continuously on $Y$, $m_Y$ a $H$-invariant and ergodic probability measure on $Y$, and $m_H$ a probability measure on $H$ with compact support $ E_H$. Suppose $\Lambda$ is the subgroup of $H$ generated by $\text{supp}(m_H)$. Let $K$ be a compact subgroup, $m_K$ the Haar measure on $K$, and $\kappa: \Lambda \rightarrow K$ a homomorphism. Let $Z= Y \times K$ and consider the left action of $\Lambda$ on $Z$ defined by the formula $\gamma (x,k) = (\gamma x, \kappa(\gamma)k)$. Assume that this $\Lambda$-action is ergodic with respect to $m_Y \otimes m_K$. Let $\pi_Y: Z \rightarrow Y$ be the projection map onto first factor, and let $m_Z$ be a $m_H$-stationary on $Z$ such that $(\pi_Y)_* m_Z = m_Y$. Then $m_Z = m_Y \otimes m_K$.
\end{prop}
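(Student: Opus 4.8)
The plan is to use the right action of the compact group $K$ on $Z = Y \times K$ as a symmetry that the stationarity equation respects, and then to reduce everything to the fact that $m_Y \otimes m_K$ is an \emph{extreme point} (equivalently, ergodic) among $m_H$-stationary measures. For $k_0 \in K$ write $R_{k_0}\colon Z \to Z$ for $R_{k_0}(y,k) = (y,kk_0)$. Since $\supp(m_H) \subseteq \Lambda$ and each $\gamma \in \Lambda$ acts by $\gamma(y,k) = (\gamma y, \kappa(\gamma)k)$, the map $R_{k_0}$ commutes with the action of every element of $\supp(m_H)$, and $\pi_Y \circ R_{k_0} = \pi_Y$. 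Hence, for each $k_0$, the pushforward $(R_{k_0})_* m_Z$ is again an $m_H$-stationary measure with $(\pi_Y)_*\!\big((R_{k_0})_* m_Z\big) = m_Y$. A Fubini computation using only $(\pi_Y)_* m_Z = m_Y$ and the bi-invariance of Haar measure on the compact group $K$ gives
\[
\int_K (R_{k_0})_* m_Z \; dm_K(k_0) \;=\; m_Y \otimes m_K ,
\]
so $m_Y \otimes m_K$ is an $m_K$-average of the stationary measures $(R_{k_0})_* m_Z$.

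Next I would show that $m_Y \otimes m_K$ is $m_H$-ergodic. It is $\Lambda$-invariant, since $m_Y$ is $H$-invariant and $m_K$ is invariant under left translation by $\kappa(\Lambda) \subseteq K$, and by hypothesis the $\Lambda$-action is ergodic for $m_Y \otimes m_K$. From this I would invoke the standard principle that a $\Lambda$-invariant and $\Lambda$-ergodic measure is extreme among $m_H$-stationary measures: if $m_Y \otimes m_K = t\sigma_1 + (1-t)\sigma_2$ with the $\sigma_i$ stationary and $t \in (0,1)$, then the densities $f_i = d\sigma_i/d(m_Y\otimes m_K) \in L^\infty$ are bounded $m_H$-harmonic functions, i.e.\ $Pf_i = f_i$ where $P = \int_H L_g\, dm_H(g)$ is the Markov operator on $L^2(m_Y\otimes m_K)$ built from the Koopman operators $L_g f = f \circ g^{-1}$; but on an \emph{invariant} measure each $L_g$ is unitary, so, comparing $\|L_g f_i\|_2 = \|f_i\|_2$ with $\|Pf_i\|_2 = \|f_i\|_2$, the equality case of the triangle inequality in $L^2$ forces $L_g f_i = f_i$ for $m_H$-a.e.\ $g$; thus $f_i$ is $\Lambda$-invariant, hence a.e.\ constant, hence $\sigma_i = m_Y \otimes m_K$. (Alternatively one simply cites this, e.g.\ from \cite{BQbook}.)

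Finally, an extreme point of the convex set of $m_H$-stationary measures that is written as an integral of stationary measures must equal $m_K$-almost every integrand; therefore $(R_{k_0})_* m_Z = m_Y \otimes m_K$ for $m_K$-a.e.\ $k_0$. Fixing one such $k_0$ and applying $R_{k_0}^{-1}$, under which $m_Y\otimes m_K$ is invariant, yields $m_Z = m_Y \otimes m_K$, as desired. The only non-formal ingredient is the ergodicity step: transferring $\Lambda$-ergodicity of the invariant measure $m_Y\otimes m_K$ into ergodicity among $m_H$-stationary measures (equivalently, the fact that bounded harmonic functions for an invariant measure are invariant); the symmetry argument of the first paragraph and the extreme-point argument of the last are routine.
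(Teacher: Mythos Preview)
The paper does not give its own proof of this proposition: it is quoted verbatim as \cite[Prop.~5.3]{SimmonsWeiss} and used as a black box. So there is nothing in the present paper to compare your argument against.

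That said, your proof is correct and is in fact essentially the argument Simmons and Weiss give. The three moves --- (i) the right $K$-action commutes with the $\Lambda$-action, so averaging $(R_{k_0})_* m_Z$ over $K$ produces $m_Y\otimes m_K$; (ii) $m_Y\otimes m_K$ is $\Lambda$-invariant and $\Lambda$-ergodic, hence extremal among $m_H$-stationary measures; (iii) an extremal point written as a barycenter of stationary measures equals almost every integrand --- are exactly the ingredients used in the source. Your sketch of step~(ii) via bounded harmonic functions is the right idea; the passage from ``$L_g f = f$ for $m_H$-a.e.\ $g$'' to ``$f$ is $\Lambda$-invariant'' is the only place requiring a word of care (one uses that $\{g: g_*\sigma_i = \sigma_i\}$ is a closed subgroup and that the Koopman representation on $L^2$ of an invariant probability measure is strongly continuous), but as you note this is standard and can simply be cited from \cite{BQbook}.
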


To see this, let $Y= \X'$. Let $H$ be given by
$$
H = \left\{ \begin{pmatrix}
    e^t O \\ & e^{-mt/n} O'
\end{pmatrix}: t \in \R, O \in \text{O}(m), O' \in O(n) \right\}.
$$
The action $\Phi$ of the group $H$ on $\X'$ is given as 
$$
 \Phi  \begin{pmatrix}
    e^t O \\ & e^{-mt/n} I_n
\end{pmatrix} (  x) = \left( \begin{pmatrix}
    e^tI_m \\ & e^{-mt/n} I_n
\end{pmatrix} \right).  x.
$$
The measure $m_Y$ equals $\mu_{\X'}$. Note that the measure $\nu'$ on $\SL_{m+n}(\R)$ has its support in $H$, hence we may treat it as a measure on $H$. Let $m_H$ denote this measure. The support $E_H$ equals $E'$, which is clearly compact. As in the above theorem, we let $\Lambda$ be the subgroup generated by $E'$. The group $K \subset \text{O}(m) \times \text{O}(n)$ is defined as the closure of the group generated by $\{(O_e, O_e')\}_{e \in E}$. The measure $m_K$ is defined as the probability Haar measure on $K$. The map $\kappa: \Lambda \rightarrow K $ is defined as 
$$
\kappa  \begin{pmatrix}
    e^t O \\ & e^{-mt/n} O'
\end{pmatrix} = (O,O').
$$
Note that by the Howe-Moore theorem (see e.g. \cite{ZimmerRobert}), the action of $\Lambda$ on $\X'$ is mixing, and hence also weak mixing. Moreover, the action of $\Lambda$ on $(K, m_K)$ (via $\kappa$) is ergodic since $\kappa(\Lambda)$ is dense in $K$. This implies (see \cite[Prop.~2.2]{KlausSchmidt}) that the product action of $\Lambda$ on $\X' \times K$ is ergodic. 

Now, let $m_Z$ be a weak limit of measures on $\X' \times K$ 
\begin{align}
\label{eq:z5}
\frac{1}{N} \sum_{p=1}^N \delta_{(a_{t_p(b)}, \kappa(b_p \cdots b_1))}    
\end{align}
along a subsequence, say $(N_k)_k$, where $b= (b_1, b_2, \ldots) \in (E')^{\N}$ is chosen from a $ (\nu')^{\otimes \N}$ co-null subset of $(E')^\N$, so that the weak limit is $\nu'$-stationary and \eqref{eq:z3} holds. Clearly this means $(\pi_{\X'})_* m_Z = m_{\X'}$. Hence, by Proposition \ref{SimmonsWeiss Prop 5.3}, we have that $m_Z= m_{\X'} \otimes m_{K}$. Since this holds for any weak limit of \eqref{eq:z5}, we get that for $(\nu')^{\otimes \N}$ a.e. $b \in (E')^{\N}$
\begin{align}
    \label{eq:z4}
    \frac{1}{N} \sum_{p=1}^N \delta_{(a_{t_p(b)}, \kappa(b_p \cdots b_1))} \rightarrow \mu_{\X'} \otimes m_{K}.
\end{align}

Consider the map $\psi: \X' \times K \rightarrow \X'$ as 
$$
\psi(x, (O,O')) = \begin{pmatrix}
    O \\& O'
\end{pmatrix} x.
$$
Clearly, $\psi$ is continuous, we get that for $(\nu')^{\otimes \N}$ a.e. $b \in (E')^{\N}$,
\begin{align*}
     \lim_{N \rightarrow \infty} \frac{1}{N} \sum_{p =1}^{N} \delta_{b_p \cdots b_1 x} &=  \lim_{N \rightarrow \infty} \psi_*(\frac{1}{N} \sum_{p=1}^N \delta_{(a_{t_p(b)}, \kappa(b_p \cdots b_1))}) \\
     &=  \psi_*( \lim_{N \rightarrow \infty} \frac{1}{N} \sum_{p=1}^N \delta_{(a_{t_p(b)}, \kappa(b_p \cdots b_1))}) \\
     &= \psi_*(\mu_{\X'} \otimes m_k) \\
     &=\mu_{\X'}.
\end{align*}
This proves the first part of the Theorem.

We now show the converse. We assume that the point $x \in \X'$ satisfy that for $(\nu')^{\otimes \N}$ almost every $(b_p)_{p \in \N} \in (G')^{ \N} $, we have that
\begin{align}
    \frac{1}{N} \sum_{p =1}^{N} \delta_{b_p \cdots b_1 x} \rightarrow \mu_{\X'},
\end{align}
as $N \rightarrow \infty$.

We will need the following theorem.

\begin{thm}[{\cite[Thm.~2.2]{SimmonsWeiss}}]
\label{SimmonsWeiss Thm2.2}
    Let $H$ be a unimodular connected Lie group, let $\Gamma_H$ be a lattice in $H$, let $X= H / \Gamma_H$, and let $m_X$ be the unique $H$-invariant probability measure on  $X$. Let $\Tilde{\nu}$ be a compactly supported probability measure on $H$. Let $F= \text{supp}(\Tilde{\nu})$. Fix $y \in X$. Suppose that for $\Tilde{\nu}^{\otimes \N}$-a.e. $(h_p)_p \in H^\N$, the sequence $(h_p \cdots h_1y)_{p \in \N}$ is equidistributed with respect to $m_X$. Let $\Lambda$ denote the subgroup of $H$ generated by $F$. Let $K$ be a compact group, let $m_K$ be the Haar measure on $K$, and let $\kappa: \Lambda \rightarrow K$ be homomorphism such that the $\Lambda$-action $\gamma(y,k)= (\gamma y, \kappa(\gamma)k)$ on $X \times K$ is ergodic with respect to $m_X \otimes m_K$. Let $Y$ be a locally compact metric space, $f: F^{\Z} \rightarrow Y$ be a continuous map, and $m_Y= f_* (\nu^{\Z})$.

    Then for $\nu^{\Z}$-a.e. $(h_p)_{p \in \Z}$, the sequence 
    $$
    (h_p \cdots h_1 y, \kappa (h_p \cdots h_1), f(T^p(b)))
    $$
    is equidistributed with respect to the measure $m_X \otimes m_K \otimes m_Y$ on $X \times K \times Y$.
\end{thm}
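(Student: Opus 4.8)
The plan is to derive the statement from the (already available) equidistribution of the pair $(h_p\cdots h_1y,\kappa(h_p\cdots h_1))$ in $X\times K$, upgrading it by a decoupling argument that exploits the compactness of $F$ and the mixing of the Bernoulli base. Write $\Omega=F^{\Z}$, $P=\tilde\nu^{\Z}$ (so that $\nu^{\Z}$ in the statement is this $P$), and let $T$ also denote the shift on $\Omega$, whose points are the bilateral sequences $b=(h_p)_{p\in\Z}$. It is convenient to work on $X\times K\times\Omega$ with the skew–shift $S(x,k,b)=(h_1x,\kappa(h_1)k,Tb)$, for which $S^{p}(y,e_K,b)=\bigl(h_p\cdots h_1y,\ \kappa(h_p\cdots h_1),\ T^{p}b\bigr)$. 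The continuous map $\Psi(x,k,b)=(x,k,f(b))$ sends the $S$–orbit of $(y,e_K,b)$ to exactly the sequence we must equidistribute, so it suffices to prove that for $P$–a.e.\ $b$ the orbit $(S^{p}(y,e_K,b))_{p\ge1}$ equidistributes with respect to $m_X\otimes m_K\otimes P$; pushing forward by $\Psi$ then yields $m_X\otimes m_K\otimes f_*P=m_X\otimes m_K\otimes m_Y$, since weak convergence of probability measures is preserved by continuous pushforward.

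First I would record tightness. As $F$ is compact, $\Omega$ and $K$ are compact, so any escape of mass of the empirical measures $\frac1N\sum_{p\le N}\delta_{S^{p}(y,e_K,b)}$ can only happen in the $X$–coordinate, and this is excluded for $P$–a.e.\ $b$ by the hypothesis that $(h_p\cdots h_1y)_p$ equidistributes to the probability measure $m_X$. Hence for $P$–a.e.\ $b$ every subsequential weak–$*$ limit $\theta$ is a probability measure, and it remains to identify it. By Breiman's law of large numbers (see \cite[Section~3.2]{BQbook}) the $X\times K$–marginal of $\theta$ is $\tilde\nu$–stationary; its further projection to $X$ is $m_X$ by hypothesis; so Proposition~\ref{SimmonsWeiss Prop 5.3}, applied with the ambient space $X$, the compact group $K$ and the homomorphism $\kappa$ (for which the associated $\Lambda$–action on $X\times K$ is ergodic), forces that marginal to equal $m_X\otimes m_K$. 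Combined with tightness this gives the equidistribution of $(h_p\cdots h_1y,\kappa(h_p\cdots h_1))$ toward $m_X\otimes m_K$ which the later argument uses. Likewise the $\Omega$–marginal of $\theta$ is $P$, because $(\Omega,T,P)$ is ergodic (indeed mixing) and $\frac1N\sum_{p\le N}\delta_{T^{p}b}\to P$ for $P$–a.e.\ $b$ by Birkhoff's theorem.

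The crux is to upgrade these marginal facts to $\theta=m_X\otimes m_K\otimes P$. Fix $g\in C_c(X\times K\times Y)$ and $\varepsilon>0$. Since $\Omega$ is compact, $f$ is uniformly continuous, so there are $L$ and a function $f_L$ depending only on the coordinates $b_{-L},\dots,b_{L}$ with $\sup_b d_Y(f(b),f_L(b))<\varepsilon$. For $p>2L$ split $h_p\cdots h_1=u_pw_p$ with $u_p=h_p\cdots h_{p-L}$ (lying in the fixed compact set of $(L+1)$–fold products from $F$) and $w_p=h_{p-L-1}\cdots h_1$; then $g\bigl(h_p\cdots h_1y,\kappa(h_p\cdots h_1),f_L(T^{p}b)\bigr)=G\bigl(\ell_p;\,w_py,\kappa(w_p)\bigr)$ for a fixed bounded continuous (and $C_c$ in the second variable) function $G$, where $\ell_p=(b_{p-L},\dots,b_{p+L})$ encodes $u_p$ and $f_L(T^{p}b)$. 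The sequence $(\ell_p)_p$ is $2L$–dependent, each $\ell_p$ has the fixed law $\mathcal L=\tilde\nu^{\otimes(2L+1)}$, and conditionally on $\sigma(h_j:j\le p-L-1)$—which determines $(w_py,\kappa(w_p))$—the datum $\ell_p$ is independent of the past with law $\mathcal L$. Hence, with $\bar G(z)=\int G(\ell;z)\,d\mathcal L(\ell)$, the differences $D_p=G(\ell_p;w_py,\kappa(w_p))-\bar G(w_py,\kappa(w_p))$ satisfy $\EE[D_p\mid\sigma(h_j:j\le p-L-1)]=0$ and $\EE[D_pD_{p'}]=0$ for $|p-p'|>2L$, so a standard second–moment and Borel–Cantelli estimate gives $\frac1N\sum_{p\le N}D_p\to0$ $P$–a.s. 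As $(w_py,\kappa(w_p))_p$ is again a random–walk trajectory, the $X\times K$–equidistribution from the previous paragraph yields $\frac1N\sum_{p\le N}\bar G(w_py,\kappa(w_p))\to\int\bar G\,d(m_X\otimes m_K)$. Using the $H$–invariance of $m_X$ and the $K$–invariance of $m_K$ to absorb the fixed element $u_p$, this limit equals $\int g\,d\bigl(m_X\otimes m_K\otimes(f_L)_*P\bigr)$; replacing $f_L$ by $f$ costs $O(\omega_g(\varepsilon))$ and $(f_L)_*P\to f_*P=m_Y$ as $L\to\infty$, so letting $\varepsilon\to0$ shows $\frac1N\sum_{p\le N}g(\cdots)\to\int g\,d(m_X\otimes m_K\otimes m_Y)$ for $P$–a.e.\ $b$ (one intersects the relevant full–measure sets over a countable dense family of $g$'s). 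This gives $\theta=m_X\otimes m_K\otimes P$ and completes the argument.

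I expect the main obstacle to be the decoupling step of the last paragraph: making rigorous the conditional independence of the local windows $\ell_p$ from the macroscopic state $(w_py,\kappa(w_p))$, the variance bound for the $2L$–dependent sequence $(D_p)_p$, and the bookkeeping needed to recover $m_Y$ in the limit $L\to\infty$ (in particular that the prefix elements $u_p$ may indeed be absorbed because they range over a \emph{compact} set). A subsidiary but indispensable input is Proposition~\ref{SimmonsWeiss Prop 5.3}, used here as a black box: it is the genuine classification statement—resting on compactness of $K$ and ergodicity of the $\Lambda$–action on $X\times K$—without which one cannot pin the $X\times K$–marginal of $\theta$ to $m_X\otimes m_K$.
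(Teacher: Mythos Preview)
This theorem is not proved in the present paper; it is quoted as \cite[Thm.~2.2]{SimmonsWeiss} and invoked as a black box in the proofs of Theorems~\ref{Random Genericity} and~\ref{Intermediate Theorem}. There is thus no ``paper's own proof'' to compare against.

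Your argument is nonetheless a correct self-contained proof. The identification of the $X\times K$-marginal via Breiman's law together with Proposition~\ref{SimmonsWeiss Prop 5.3} is exactly right, and the decoupling step---approximating $f$ by a cylinder function $f_L$, splitting $h_p\cdots h_1=u_pw_p$, exploiting the $2L$-dependence of the windows $\ell_p$ with a second-moment/Borel--Cantelli bound to kill $\tfrac1N\sum D_p$, then absorbing $u_\ell$ via the invariance of $m_X$ and of $m_K$---goes through. Two small remarks. First, to apply equidistribution to $\bar G$ you need $\bar G\in C_c(X\times K)$; this holds because $\{u_\ell:\ell\in F^{2L+1}\}$ is compact in $H$, so $\bigcup_\ell u_\ell^{-1}(\mathrm{supp}_X\,g)$ is compact. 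Second, your first paragraph announces equidistribution on $X\times K\times\Omega$ followed by pushforward via $\Psi$, while the third paragraph in fact works directly with $g\in C_c(X\times K\times Y)$; the closing sentence ``This gives $\theta=m_X\otimes m_K\otimes P$'' is therefore a slight mismatch---what the argument actually shows is that every subsequential limit on $X\times K\times Y$ equals $m_X\otimes m_K\otimes m_Y$. This is purely cosmetic and does not affect the validity of the proof.
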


We will use the above theorem with $H= G'$, $\Gamma_H =\Gamma' $, $X= \X'$, $m_x= \mu_{\X'}$, $\Tilde{\nu}= \nu'$, $y = x$, $F= E'$. The group $K$ is taken to be the closure of subgroup in $O(m) \times O(n) \subset \SL(m+n)(\R)$, generated by $(O_e, O_e')_{e \in E}$. The map $\kappa: \Lambda \rightarrow K$ is the unique homomorphism such that
$$
\kappa(e^{-1})= \begin{pmatrix}
    O_e \\ & O_e'
\end{pmatrix}.
$$
The element $\kappa(e^{-1})$ will also be denoted as $\Tilde{O}_e$.

By the Howe-Moore theorem (see e.g. \cite{ZimmerRobert}), the action of $\Lambda$ on $\X'$ is mixing, and hence also weak mixing. Moreover, the action of $\Lambda$ on $(K, m_K)$ (via $\kappa$) is ergodic since $\kappa(\Lambda)$ is dense in $K$. This implies (see \cite[Prop.~2.2]{KlausSchmidt}) that the product action of $\Lambda$ on $\X' \times K$ is ergodic. 

Let $Y = \R$. Define $f: (E')^\Z \rightarrow \R$ as $f((b_p)_p)= -\log(\rho_{b_0})$. Using Theorem \ref{SimmonsWeiss Thm2.2}, we get a sequence $b=(b_p)_{p \in \Z} \in (E')^\N$ such that 
\begin{align}
    \label{eq:aa1}
    \frac{1}{N} \sum_{p=1}^N \delta_{(b_p \cdots b_1 x,\kappa(b_p \cdots b_1), f(T^p(b)) )} \rightarrow \mu_{\X'} \otimes m_K \otimes f_* (\nu^{\Z}),
\end{align}
as $N \rightarrow \infty$.

Fix $\phi \in \C_c(\X')$. Define $\Phi: \X' \times K \times \R \rightarrow \X'$ as $\Phi(x,k,t)= \int_{l= -t}^0 f( k^{-1}x)$. Also, define $\psi: \X' \times K \times \R \rightarrow \X'$ as $\psi(x,k,t)=t$. Then, we have
\begin{align*}
    & (\int_{\X} \phi \, d\mu_{\X}).(\int_{\R } t \, df_*(\nu^{\otimes \Z})(t) ) \\
    &= \int_{\X \times K \times \R } \int_{l= -t}^{0} \phi(a_l k^{-1} x) \, dl d\mu_{\X} \otimes m_K \otimes f_*(\nu^{\otimes \Z}) (x, k, t) \\
    &= \int_{\X \times K \times \R } \Phi(x,k,t)  \, d\mu_{\X} \otimes m_K \otimes f_*(\nu^{\otimes \Z}) (x, k, t, \av) \\
    &=_{\eqref{eq:aa1}} \lim_{N \rightarrow \infty} \frac{1}{N} \sum_{j=1}^N \Phi(b_j \cdots b_1 x, \kappa(b_j \cdots b_1), f(T^j(\Tilde{b}))) \\
    &= \lim_{N \rightarrow \infty} \frac{1}{N} \sum_{j=1}^N \int_{l= \log \rho_{b_j}}^0 \phi( a_l \Tilde{O}_{b_1} \cdots \Tilde{O}_{b_j}  b_j \cdots b_1 x) \, dm_{\R}(l) \\
    &=  \lim_{N \rightarrow \infty} \frac{1}{N} \sum_{j=1}^N \int_{l= \log \rho_{b_j}}^0 \phi( a_l a_{-(\log \rho_{b_j} + \cdots + \log \rho_{b_1})} x) \, dm_{\R}(l) \\
    &= \lim_{N \rightarrow \infty} \left( \frac{1}{-(\log \rho_{b_N} + \cdots + \log \rho_{b_1})}\int_{l= 0}^{-(\log \rho_{b_N} + \cdots + \log \rho_{b_1})} \phi( a_l x) \, dm_{\R}(l) \right)\\
    &\times \left( \frac{1}{N} \sum_{j=1}^N \psi(b_j \cdots b_1 x, \kappa(b_j \cdots b_1), f(T^j(\Tilde{b}))) \right) \\
    &=_{\eqref{eq:aa1}} \left( \lim_{N \rightarrow \infty} \frac{1}{-(\log \rho_{b_N} + \cdots + \log \rho_{b_1})} \int_{l= 0}^{-(\log \rho_{b_N} + \cdots + \log \rho_{b_1})} \phi( a_l x) \, dm_{\R}(l) \right)\\
    &\times\left( \int_{\X \times K \times \R \times \R^{m+n}} \psi(x, k , t, \av) \, d\mu_{\X} \otimes m_K \otimes f_*(\nu^{\otimes \Z}) (x, k, t, \av) \right)  \\
    &= \left( \lim_{N \rightarrow \infty} \frac{1}{-(\log \rho_{b_N} + \cdots + \log \rho_{b_1})} \int_{l= 0}^{-(\log \rho_{b_N} + \cdots + \log \rho_{b_1})} \phi( a_l x) \, dm_{\R}(l) \right).\left(\int_{\R } t \, df_*(\nu^{\otimes \Z})(t) \right).
\end{align*}
Since this holds for all $\phi \in \C_c(\X)$, we get that 
$$
\lim_{N \rightarrow \infty} \frac{1}{-(\log \rho_{b_N} + \cdots + \log \rho_{b_1})}\int_{l= 0}^{-(\log \rho_{b_N} + \cdots + \log \rho_{b_1})}  \delta_{a_l x} = \mu_{\X}. 
$$
Since $-(\log \rho_{b_N} + \cdots + \log \rho_{b_1}) \rightarrow \infty$ and the gaps $-\log \rho_{b_p}$ ($p \in \N$) are bounded, it follows that 
$$
\lim_{T \rightarrow \infty} \frac{1}{T}\int_{l= 0}^{T}  \delta_{a_l x} = \mu_{\X}.
$$
This proves the converse part of the Theorem.

\end{proof}

\section{Application to Homogeneous Dynamics}
\label{Application to Homogeneous Dynamics}

\subsection{Type 1 measures}
\label{subsec:Type 1 measures}

We begin by introducing the class of sets on which the measures of {\em Type 1} are supported. We follow the notation in section 8.1 of \cite{SimmonsWeiss}.\\

A contracting similarity is a map $\R^m \rightarrow \R^m$ of the form $x \mapsto cO(x)+ y$ where $O$ is an $m \times m$ orthogonal matrix, $c \in (0,1)$, and $y \in \R^m$. A finite similarity IFS on $\R^m$ is a collection of contracting similarities $\Phi= (\phi_e: \R^m \rightarrow \R^m)_{e \in E}$ indexed by a finite set $E$, called the alphabet. Assume that
$$
\phi_e(x)= \rho_e O_e(x) + w_e.
$$
Let $B = E^\N$. The coding map of an IFS $\Phi$ is the map $\eta: B \rightarrow \R^m$ defined by the formula
\begin{align}
\label{eq:def eta}
    \eta(b)= \lim_{k \rightarrow \infty} \phi_{b_p^1}(\alpha_0),
\end{align}
where $\alpha_0 \in \R^m$ is an arbitrary but fixed point and
$$
\phi_{b_p^1}= \phi_{b_1} \circ \cdots \circ \phi_{b_p}.
$$
It is well known that the limit in $\eqref{eq:def eta}$ exists and is independent of the choice of $\alpha_0$ and that the coding map is continuous. The image of $B$ under the coding map called the limit set of $\Phi$, is a compact subset of $\R^m$, which we denote by $\cK= \cK(\Phi)$. 

All of this can be generalized to compact similarity IFSs. In this case, the set $E$ is a compact set and $\Phi= (\phi_e)_{e \in E}$ is a continuously varying family of contracting similarities of $\R^m$. Note that since $E$ is compact,  we have $\sup_{e \in E}\rho_e< 1$. Thus, the coding map $\eta$ is still a continuous map and the image of $B$ under $\eta$ is compact.

 Let $\text{Prob}(E)$ denote the space of probability measures on $E$. For each $\nu \in \text{Prob}(E)$ we can consider the measure $\eta_* \nu^{\otimes \N}$ under the coding map. A measure of the form $\eta_* \nu^{\otimes \N}$ is called a Bernoulli measure. Note that by replacing $E$ by $\text{supp}(\nu)$, we may always assume that $\supp(\nu)=E$. 
 
\begin{defn}
\label{def: Type 1 Measures}
   A measure $\alpha$ on $\R^{m}$ will be called measures of {\em Type 1} if it equals a Bernoulli measure on $\R^m$ (defined as above) such that $\supp(\alpha)$ is not a singleton set. 
\end{defn}

Note that some interesting measures of {\em Type 1} include Lebesgue measures on bounded subsets of lines or hyperplanes in $\R^{m}$.

\subsection{Proof of Theorem \ref{Intermediate Theorem}}
\label{subsec: Proof of Theorem Intermediate Theorem}

Assume the notation as in sub-section \ref{subsec:Type 1 measures}. We fix a fractal $\cK$ and a measure $\nu$ on $E$, such that $\supp(\nu) =E$. 

Now, for each element $e \in E$, define
\begin{align}
    \label{eq:def g_e}
    g_e= \left(\begin{pmatrix}
        \rho^{-1}_e O_e^{-1} \\ & \rho_{e}^{m/n}
    \end{pmatrix}, \begin{pmatrix}
         \rho^{-1}_e O_e^{-1} w_e \\ 0
    \end{pmatrix} \right).
\end{align}
Identifying elements $e \in E$ with $g_e \in G$, we may assume that $E \subset G$, hence $\nu$ is a measure on $G$. 

The main goal of this part of the paper is to prove Theorems \ref{Intermediate Theorem} and \ref{Random Genericity}.

\begin{proof}[Proof of Theorem \ref{Intermediate Theorem}]
    Claim that by Breinman's law of large numbers, Theorem \ref{Main Random walk} and Theorem \ref{Random Genericity}, we have that for $\nu^{\otimes \N}$ almost every $b \in G^\N$, 
    \begin{align}
    \label{eq:y1}
        \frac{1}{N} \sum_{p=1}^{N} \delta_{b_p \cdots b_1 x} \rightarrow \mu_{\X}
    \end{align}
    as $N \rightarrow \infty$. To see this, we use the following form of Breiman's law of large numbers:
    \begin{cor}[{\cite[Cor.~3.4]{BQbook}}]
    \label{cor: BQbook}
        Let $Y$ be a compact metrizable topological space carrying a continuous action of the locally compact second countable group $H$. Suppose $\nu$ is a probability measure on $H$. Then, for any $y$ in $Y$ and for $\nu^\N$-almost any $(b_p)_{p \in \N}$, we have that any weak limit of 
        $$
         \frac{1}{N} \sum_{p=1}^N \delta_{b_{p-1} \cdots b_1 y}
        $$
        is $\nu$-stationary.
    \end{cor}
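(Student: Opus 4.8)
The plan is to run the standard martingale-difference proof of Breiman's law of large numbers, taking care that the resulting almost-sure statement can be made uniform over test functions. Write $x_1 := y$ and $x_{p+1} := b_p x_p = b_p \cdots b_1 y$, so the empirical measures in the statement are $\lambda_N := \frac{1}{N}\sum_{p=1}^N \delta_{x_p}$, and introduce the Markov operator $P$ on $C(Y)$ by $(P\varphi)(z) := \int_H \varphi(hz)\, d\nu(h)$. Since the action of $H$ on $Y$ is jointly continuous and $\varphi$ is bounded, dominated convergence shows $P\varphi \in C(Y)$ with $\|P\varphi\|_\infty \le \|\varphi\|_\infty$; recall also that a probability measure $\lambda$ on $Y$ is $\nu$-stationary exactly when $\int_Y P\varphi\, d\lambda = \int_Y \varphi\, d\lambda$ for all $\varphi \in C(Y)$. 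As $Y$ is compact, any weak-$*$ limit of $(\lambda_N)$ is automatically a probability measure, so it suffices to verify this identity for all such limits.

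The core estimate is for a fixed $\varphi \in C(Y)$. On the probability space $(H^{\N}, \nu^{\otimes \N})$ with filtration $\mathcal{F}_p := \sigma(b_1, \ldots, b_p)$, the point $x_p$ is $\mathcal{F}_{p-1}$-measurable while $b_p$ is independent of $\mathcal{F}_{p-1}$ with law $\nu$, so $\EE[\varphi(x_{p+1}) \mid \mathcal{F}_{p-1}] = (P\varphi)(x_p)$. Hence $D_p := \varphi(x_{p+1}) - (P\varphi)(x_p)$ is a bounded martingale-difference sequence, $|D_p| \le 2\|\varphi\|_\infty$. The Azuma--Hoeffding inequality gives, for every $\e > 0$,
\[
\Prob\!\left( \Big| \sum_{p=1}^N D_p \Big| \ge \e N \right) \le 2 \exp\!\left( - \frac{\e^2 N}{8 \|\varphi\|_\infty^2} \right),
\]
which is summable in $N$; applying Borel--Cantelli along $\e = 1/q$, $q \in \N$, yields $\frac{1}{N}\sum_{p=1}^N D_p \to 0$ almost surely (Kolmogorov's $L^2$ strong law for martingale differences would do equally well). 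Since $\frac{1}{N}\sum_{p=1}^N \varphi(x_{p+1})$ and $\int_Y \varphi\, d\lambda_N = \frac{1}{N}\sum_{p=1}^N \varphi(x_p)$ differ by $\frac{1}{N}(\varphi(x_{N+1}) - \varphi(x_1)) = O(1/N)$, this gives $\int_Y \varphi\, d\lambda_N - \int_Y P\varphi\, d\lambda_N \to 0$ almost surely.

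Finally, I would make the exceptional set independent of $\varphi$: as $Y$ is compact metrizable, $C(Y)$ is separable, so fixing a countable dense set $\{\varphi_k\}_k \subset C(Y)$ and intersecting the full-measure events from the previous step produces a single co-null $\Omega \subset H^{\N}$ on which $\int \varphi_k\, d\lambda_N - \int P\varphi_k\, d\lambda_N \to 0$ for every $k$. For $b \in \Omega$ and any subsequential weak-$*$ limit $\lambda$ of $(\lambda_N)$, passing to the limit (using $\varphi_k, P\varphi_k \in C(Y)$) gives $\int P\varphi_k\, d\lambda = \int \varphi_k\, d\lambda$ for all $k$, and a sup-norm approximation together with $\|P\|_{C(Y) \to C(Y)} \le 1$ extends the identity to all $\varphi \in C(Y)$; thus $\lambda$ is $\nu$-stationary. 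The argument is essentially routine; the only points that need attention are checking that $P$ preserves $C(Y)$ (joint continuity of the action) and the separability reduction that furnishes a single almost-sure event valid for every continuous test function.
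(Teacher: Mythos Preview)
Your argument is correct and is the standard martingale--difference proof of Breiman's law of large numbers. Note, however, that the paper does not give its own proof of this statement: it is quoted verbatim as \cite[Cor.~3.4]{BQbook} and used as a black box inside the proof of Theorem~\ref{Intermediate Theorem}. So there is no in-paper proof to compare against; what you have written is essentially the argument one finds in the cited reference (Benoist--Quint), with the minor cosmetic difference that you invoke Azuma--Hoeffding plus Borel--Cantelli where the usual presentation appeals directly to the strong law for $L^2$ martingale differences.
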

    We apply corollary \ref{cor: BQbook} to the compact space $Y= \X \cup \{\infty\}$ (the one point compactification of $\X$), with $H=G$ acting usually on $\X$ and trivially on $\infty$. Using Corollary \ref{cor: BQbook}, we have for $\nu^{\otimes \N}$ almost every $b \in G^\N$, any weak limit of LHS of \eqref{eq:y1} is $\nu$-stationary. Thus the same holds for LHS of \eqref{eq:y1}. Now, by Theorem \ref{Random Genericity}, we have that for $\nu^{\otimes \N}$ almost every $b \in G^\N$, the image of any weak limit of LHS of \eqref{eq:y1} under $\pi_*$ equals $\mu_{\X'}$. Thus Theorem \ref{Main Random walk} implies \eqref{eq:y1}.

We will now apply Thm. \ref{SimmonsWeiss Thm2.2} with $H= G$, $\Gamma_H = \Gamma$ and $X = \X$ and $\Tilde{\nu}= \nu$. The point $y$ equals $x \in \X'$, which satisfies the hypothesis of Theorem \ref{SimmonsWeiss Thm2.2} due to \eqref{eq:y1}. 

Let $K \subset \text{O}(d)$ be the closure of the subgroup generated by $(\Tilde{O}_e^{-1})_{e \in E}$ where 
$$
\Tilde{O}_e= \begin{pmatrix}
    O_e \\ & I_n
\end{pmatrix} .
$$
Note that $\Lambda$, the subgroup of $G$ generated by $E$ is contained in the set 
$$
\left\{ \left[\begin{pmatrix}
    e^{t} O \\ & e^{-mt/n} I_n
\end{pmatrix}, v \right]: t \in \R, O \in \text{O}(m) , v \in \R^{m+n} \right\}.
$$
Let $\kappa: \Lambda \rightarrow K $ be defined as 
$$
\left[\begin{pmatrix}
    e^{t} O \\ & e^{-mt/n}
\end{pmatrix}, v \right] \mapsto \begin{pmatrix}
    O \\ & I_n
\end{pmatrix}.
$$
Note that the action of $\Lambda$ on $\X$ is mixing (see for e.g. \cite{Kleinbock}), and hence also weak mixing. Moreover, the action of $\Lambda$ on $(K, m_K)$ (via $\kappa$) is ergodic since $\kappa(\Lambda)$ is dense in $K$. This implies(see \cite[Prop. 2.2]{KlausSchmidt}) that the product action of $\Lambda$ on $\X \times K$ is ergodic. 
Let $Y= \R \times \R^{m+n}$ and $f: E^\Z \rightarrow \R \times \R^{m+n}$ be given by $$f((b_p)_p)= \left(-\log \rho_{b_0}, \begin{pmatrix}
    \eta((b_p)_{p \in \N} \\ 0
\end{pmatrix}\right).$$ Then by Theorem \ref{SimmonsWeiss Thm2.2}, we have that for $\nu^{\Z}$-a.e. $(b_p) \in E^\Z$
\begin{align}
    \label{eq:y3}
    \lim_{N \rightarrow \infty} \frac{1}{N} \sum_{p=1}^N \delta_{(b_p \cdots b_1 x, \kappa(b_p \cdots b_1), f(T^p(b)))} \rightarrow \mu_{\X'} \otimes m_K \otimes f_*(\nu^{\otimes \Z}).
\end{align}

Note that for any $b \in E^\N$, we have 
\begin{align}
    \label{eq:y4}
    a_{t_j(b)} [I_{m+n}, \begin{pmatrix}
     \eta(b)    \\0
    \end{pmatrix}] x &= \Tilde{O}_{b_1} \cdots \Tilde{O}_{b_j} [I_{m+n}, \begin{pmatrix}
     \eta(T^j(b))    \\0
    \end{pmatrix}] b_j \cdots b_1x.
\end{align}

Define $\psi: \X \times K \times \R \times \R^{m+n} \rightarrow \R$ as $(x,k, t, \av) \mapsto t$.
Fix $\phi \in \C_c(\X)$. Define $\Phi:\X \times K \times \R \times \R^{m+n} $ as 
$$
\Phi(x, k, t, \av) = \int_{l= -t}^0 \phi(a_l k^{-1} [I_{m+n}, \av] x) \, dm_{\R}(l).
$$
Clearly $\Phi$ is continuous and $\Phi \in L_\infty(\X \times K \times \R \times \R^{m+n},\mu_{\X} \otimes m_K \otimes f_*(\nu^{\otimes \Z}) )$. Thus, we have for any sequence $\Tilde{b}=(b_p)_{p \in \Z}$ (with $b= (b_p)_{p \in \N}$) satisfying \eqref{eq:y3}
\begin{align*}
    & \left(\int_{\X} \phi \, d\mu_{\X}\right).\left(\int_{\R \times \R^{m+n}} t \, df_*(\nu^{\otimes \Z})(t, \av) \right) \\
    &= \int_{\X \times K \times \R \times \R^{m+n} } \int_{l= -t}^{0} \phi(a_l k^{-1} [I_{m+n}, \av] x) \, dl d\mu_{\X} \otimes m_K \otimes f_*(\nu^{\otimes \Z}) (x, k, t, \av) \\
    &= \int_{\X \times K \times \R \times \R^{m+n} } \Phi(x,k,t, \av)  \, d\mu_{\X} \otimes m_K \otimes f_*(\nu^{\otimes \Z}) (x, k, t, \av) \\
    &=_{\eqref{eq:y3}} \lim_{N \rightarrow \infty} \frac{1}{N} \sum_{j=1}^N \Phi(b_j \cdots b_1 x, \kappa(b_j \cdots b_1), f(T^j(\Tilde{b}))) \\
    &= \lim_{N \rightarrow \infty} \frac{1}{N} \sum_{j=1}^N \int_{l= \log \rho_{b_j}}^0 \phi( a_l \Tilde{O}_{b_1} \cdots \Tilde{O}_{b_j} [I_{m+n},\begin{pmatrix}
        \eta(T^j(b)) \\ 0
    \end{pmatrix} ] b_j \cdots b_1 x) \, dm_{\R}(l) \\
    &=_{\eqref{eq:y4}}  \lim_{N \rightarrow \infty} \frac{1}{N} \sum_{j=1}^N \int_{l= \log \rho_{b_j}}^0 \phi( a_l a_{-(\log \rho_{b_j} + \cdots + \log \rho_{b_1})} [I_{m+n},\begin{pmatrix}
        \eta(b) \\ 0
    \end{pmatrix} ] x) \, dm_{\R}(l) \\
    &= \lim_{N \rightarrow \infty} \left( \frac{1}{t_N(b)}\int_{l= 0}^{t_N(b)} \phi( a_l [I_{m+n},\begin{pmatrix}
        \eta(b) \\ 0
    \end{pmatrix} ] x) \, dm_{\R}(l) \right) \left( \frac{1}{N} \sum_{j=1}^N \psi(b_j \cdots b_1 x, \kappa(b_j \cdots b_1), f(T^j(\Tilde{b}))) \right) \\
    &=_{\eqref{eq:y3}} \left( \lim_{N \rightarrow \infty} \frac{1}{t_N(b)} \int_{l= 0}^{t_N(b)} \phi( a_l[I_{m+n},\begin{pmatrix}
        \eta(b) \\ 0
    \end{pmatrix} ] x) \, dm_{\R}(l) \right)\\
    &\times \left( \int_{\X \times K \times \R \times \R^{m+n}} \psi(x, k , t, \av) \, d\mu_{\X} \otimes m_K \otimes f_*(\nu^{\otimes \Z}) (x, k, t, \av) \right)  \\
    &= \left( \lim_{N \rightarrow \infty} \frac{1}{t_N(b)} \int_{l= 0}^{t_N(b)} \phi( a_l [I_{m+n},\begin{pmatrix}
        \eta(b) \\ 0
    \end{pmatrix} ] x) \, dm_{\R}(l) \right).\left(\int_{\R \times \R^{m+n}} t \, df_*(\nu^{\otimes \Z})(t, \av) \right).
\end{align*}
Since this holds for all $\phi \in \C_c(\X)$, we get that 
$$
\lim_{N \rightarrow \infty} \frac{1}{t_N(b)}\int_{l= 0}^{t_N(b)}  \delta_{a_l [I_{m+n},\begin{pmatrix}
        \eta(b) \\ 0
    \end{pmatrix} ] x} = \mu_{\X}. 
$$
Since $t_N(b) \rightarrow \infty$ and the gaps $t_{N+1}(b)- t_N(b)$ ($N \in \N$) are bounded, it follows that 
$$
\lim_{T \rightarrow \infty} \frac{1}{T}\int_{l= 0}^{T}  \delta_{a_l [I_{m+n},\begin{pmatrix}
        \eta(b) \\ 0
    \end{pmatrix} ] x} = \mu_{\X}.
$$
This proves the Theorem.
\end{proof}

\section{Diophantine Approximation}
\label{Diophantine Approximation}

In this section, we will study inhomogeneous Diophantine properties of $(A, b)$, when the Diophantine properties of $A$ are known and $b$ varies in a fractal. 

\begin{defn}
    Define a map $u: \Mat \rightarrow \SL_{m+n}(\R)$ as 
    $$
    u(A)= \begin{pmatrix}
        I_m & A \\ & I_n
    \end{pmatrix}.
    $$
\end{defn}

\subsection{Type 2 Measures}
\label{subsec: Type 2 measures}
Following \cite{ABRD} and \cite{solanwieser}, we define constraining pencils. Let $P< G'$ be the parabolic group given by
\begin{align}\label{eq: parabolic}
P = \{g \in G: \lim_{t \to \infty}a_tg a_{-t} \text{ exists}\}
\end{align}
and note that we may identify $P\backslash G \simeq \Gr(m, m+n)$.

\begin{defn}
For an integer $r \leq m$ and a proper subspace $W \subset \R^{m+n}$ the pencil $\mathfrak{P}_{W, r}$ is given by:
\begin{align*}
\mathfrak{P}_{W, r} = \{V \in \Gr(m, m+n): \dim(V\cap W) \geq r\}.
\end{align*}
The pencil is constraining if $\frac{\dim(W)}{r} < \frac{m+n}{m}$ and weakly constraining if  $\frac{\dim(W)}{r} \leq \frac{m+n}{m}$.
\end{defn}

\begin{defn}
A partial flag subvariety of $P\backslash G$ (for $\{a_t\}$) is a subvariety of the form $PHg$ where $H < G$ is a reductive group containing $\{a_t\}$ and where $g \in G$.
\end{defn}

A measure $\beta$ on $\Mat$ is said to be a {\em Type 2 measure} if it equals the pushforward of Lebesgue measure on $[0,1]$ under an analytic map $\varphi: [0,1] \rightarrow \R^m$, such that the image of $u \circ \phi$ in $P \backslash G$ is not contained in a weakly constraining pencil or in a partial flag subvariety.

\begin{rem}
    In case $m=1$, the above conditions for $\varphi: [0,1] \to \Mat$ translate to the image of $\varphi$ not being contained in any affine hyperplane.
\end{rem}

\begin{thm}[{\cite[Thm.~1.10]{solanwieser}}]
\label{thm: genericity of Type 2 measures}
    For $\beta$ a measure of {\em Type 2}, the following holds: for any $x \in \X'$ and for $\beta$-a.e. $A\in \Mat$, we have $u(A)x$ is Birkhoff generic.
\end{thm}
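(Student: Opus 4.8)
This theorem is \cite[Thm.~1.10]{solanwieser}; what follows is a sketch of the route one would take to prove it. Birkhoff genericity of $u(A)x$ means $\frac1T\int_0^T \delta_{a_t u(A)x}\,dt \to \mu_{\X'}$, so the goal is to establish this for $\beta$-a.e.\ $A$, i.e.\ for Lebesgue-a.e.\ $s\in[0,1]$ with $A=\varphi(s)$. The natural strategy is to pass through equidistribution of expanding translates of the analytic curve $\gamma_x:s\mapsto u(\varphi(s))x$: since $\{a_t\}$ normalizes and expands the horospherical subgroup $u(\Mat)$, one studies the weak-$*$ limit points of the probability measures $(a_t)_*\big((\gamma_x)_*\mathrm{Leb}_{[0,1]}\big)$ as $t\to\infty$. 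A key preliminary observation is that the two nondegeneracy hypotheses on $u\circ\varphi$ (not contained in a weakly constraining pencil, nor in a partial flag subvariety $PHg$) are conditions on the image of $u\circ\varphi$ in $P\backslash G\simeq\Gr(m,m+n)$, hence independent of $x$; this is exactly what makes the conclusion uniform over $x\in\X'$.

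The first half of the argument is the equidistribution statement itself. Following the linearization technique for polynomial-like trajectories (developed by Shah and adapted in \cite{ABRD, solanwieser}), any weak-$*$ subsequential limit of $(a_t)_*\big((\gamma_x)_*\mathrm{Leb}\big)$, once no mass escapes to infinity, is a homogeneous measure $\mu_H$ for some closed subgroup $H\supseteq\{a_t\}$. Non-escape of mass is precisely the Kleinbock--Margulis-type nondivergence estimate guaranteed by the curve not lying in a weakly constraining pencil. To force $H=G'$ (so $\mu_H=\mu_{\X'}$) one argues that if the limit were supported on a proper homogeneous subspace $Hg\Gamma'$, then $u\circ\varphi$ would be trapped inside the corresponding partial flag subvariety $PHg$, contradicting the second hypothesis. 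Hence every limit point equals $\mu_{\X'}$ and the averaged measures equidistribute.

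The second half --- and the main obstacle --- is to upgrade equidistribution of the averaged measures to \emph{pointwise} Birkhoff genericity for a.e.\ $s$. For this one needs a quantitative version of the above: a polynomial rate of equidistribution with constants uniform along the curve, obtained by exploiting analyticity of $\varphi$ (coordinates of the relevant polynomial maps are $(C,\alpha)$-good, giving power-savings both in the nondivergence estimate and in the decay of matrix coefficients). Granted such a rate, one covers $[0,T]$ by a geometric sequence of scales, estimates a second moment of the Birkhoff averages in $s$ at each scale, and runs a Borel--Cantelli argument to conclude that for a.e.\ $s$ the averages along a rapidly increasing sequence of times converge to $\mu_{\X'}$; a final interpolation between consecutive times (with excursions to infinity controlled by the nondivergence estimate) promotes this to convergence of $\frac1T\int_0^T$. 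Executing this transfer --- and in particular making the rate uniform in $x$ and in the curve --- is the substantial part, and is carried out in \cite{solanwieser}.
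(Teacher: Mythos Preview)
The paper does not give its own proof of this statement: it is quoted verbatim as \cite[Thm.~1.10]{solanwieser} and used as a black box, with no argument supplied. So there is nothing in the present paper to compare your proposal against. Your sketch is a reasonable outline of the strategy actually used in \cite{solanwieser} (nondivergence via the pencil condition, linearization to rule out proper homogeneous limits via the partial flag condition, then an effective/second-moment upgrade to pointwise genericity), and it correctly identifies why the hypotheses are phrased on $P\backslash G'$ and hence are uniform in $x$. For the purposes of this paper, simply citing \cite{solanwieser} is what is done; your write-up would function as an informative footnote rather than a replacement for a proof.
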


\subsection{Type 3 Measures}
\label{subsec: Type 3 measures}
Following \cite{SimmonsWeiss}, we define an \emph{algebraic similarity} of $\Mat$ to be a map of the form $A \mapsto \rho O_1 A O_2 + B$, where $O_1 \in O(m)$, $O_2 \in O(n)$, $\rho > 0$ and $B \in \Mat$. If $m = 1$ or $n = 1$, then every similarity is algebraic.  An algebraic similarity IFS is a continuously varying family of algebraic similarities of $\Mat$. It will be called irreducible if it does not leave invariant any proper affine subspace of $\Mat \cong \R^{m n}$. It will be called compact if the set $E$ of algebraic similarities is a compact set and $\Phi= (\phi_e)_{e \in E}$ is continuously varying family of algebraic similarities of $\Mat$.\\ 

A measure $\mu\in\Prob(E)$ is called contracting on average if 
\[
\int \log\|\phi_e'\| \, d\mu(e) < 0,
\]
where $\|\phi_e'\|$ denotes the scaling constant of the similarity
$\phi_e$ which is equal to the norm of the derivative $\phi_e'$ at any point
of $\R^d$. If $\mu$ is contracting on average, then by the ergodic
theorem $\|\phi_{b^1_p}'\| \to 0$ exponentially fast for $\beta$-a.e. $b\in E^{\N}$. In this case, the limit 
\begin{align}
\label{eq:def eta 2}
    \eta(b)= \lim_{k \rightarrow \infty} \phi_{b_p^1}(\alpha_0),
\end{align}
converges a.e. $b \in E^{\N}$ and for any fixed $\alpha_0 \in \Mat$, thereby defining a measure-preserving map
$\pi:(E^\N,\nu^{\otimes}) \to (\R^d,\eta_*(\nu)^{\otimes \N})$. Note that the definition of $\eta$ is independent of the choice of $\alpha_0$.

\begin{defn}\label{def: type 3 measures}
Let $\Phi$ be an irreducible compact algebraic similarity IFS on $\Mat$ and fix $\nu\in\Prob(E)$, contracting on average, such that $\supp(\nu) = E$. Then the Bernoulli measure $\eta_*(\nu)^{\otimes \N}$ consists of {\em Type 3 measures}. In addition, if there exists $e \in E$ such that $\phi_e(0)=0$ then we will call it a measure of {\em Type 3'}. 
\end{defn}

\begin{thm}[{\cite[Thm.~8.11]{SimmonsWeiss}}]
\label{thm: genericity of Type 3 measures}
     For $\gamma$ a measure of {\em Type 3}, the following holds: for $\gamma$-a.e. $A\in \Mat$, we have $u(A)x$ is Birkhoff generic.
\end{thm}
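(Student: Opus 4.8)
Since the statement is quoted verbatim from \cite{SimmonsWeiss}, I will only outline how one proves it; it is the prototype on which our Theorem~\ref{Intermediate Theorem} is patterned, with the non-expanding classification Theorem~\ref{Main Random walk} replaced by a more classical one. The plan is, first, to encode the algebraic similarity IFS $\Phi=(\phi_e)_{e\in E}$, $\phi_e(A)=\rho_e O_1^e A O_2^e+B_e$, as a random walk on $\X'$: to each $\phi_e$ one attaches a block matrix $g_e\in\SL_{m+n}(\R)$ (built from $u(B_e)$ and $\mathrm{diag}\bigl(\rho_e^{-1}(O_1^e)^{-1},\rho_e^{m/n}(O_2^e)^{-1}\bigr)$, exactly as in \eqref{eq:def g_e}) so that, writing $\widetilde O_e=\mathrm{diag}((O_1^e)^{-1},(O_2^e)^{-1})$ and $t_e=-\log\rho_e$, one has the cocycle identity $a_{t_N(b)}\,u(\eta(b))\,x=\widetilde O_{b_1}\cdots\widetilde O_{b_N}\,u(\eta(T^Nb))\,g_{b_N}\cdots g_{b_1}\,x$ with $t_N(b)=-\sum_{i\le N}\log\rho_{b_i}$, in complete analogy with \eqref{eq:y4}. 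One then lets $\nu$ be the pushforward to $\{g_e\}_{e\in E}$ of the defining weight measure on $E$. Because $\Phi$ is contracting on average, $\int\log\rho_e\,d\nu<0$, so for $\nu^{\otimes\N}$-a.e.\ $b$ the exponents $t_N(b)$ tend to $+\infty$ with bounded gaps $t_{N+1}(b)-t_N(b)=-\log\rho_{b_{N+1}}$; in particular the walk is expanding on average.

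The core step is to show that $\mu_{\X'}$ is the \emph{only} $\nu$-stationary probability measure on $\X'$. I would combine Breiman's law of large numbers (Corollary~\ref{cor: BQbook}) — which reduces the problem to classifying stationary measures — with the measure classification for expanding-on-average random walks of \cite{BQ1} and \cite{EskLin1}, to conclude that every ergodic $\nu$-stationary measure is homogeneous, carried by a periodic orbit $Hz$ for a reductive $H$ containing a conjugate of $\{a_t\}$. It then remains to rule out every proper such $H$, and this is exactly where the hypotheses on $\Phi$ enter: a proper homogeneous stationary measure would pin the conditional measure over $P\backslash G\simeq\Gr(m,m+n)$ to a proper $u$-invariant affine subspace of $\Mat$, forcing $\Phi$ to preserve a proper affine subspace, contradicting irreducibility; and a measure supported on finitely many periodic $\{a_t\}$-orbits is excluded because the walk would then stay bounded, incompatible with $\int\log\rho_e\,d\nu<0$ (the iterates escape to infinity along the expanded horosphere).

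With uniqueness of the stationary measure in hand, the passage to Birkhoff genericity is verbatim the argument in the proof of Theorem~\ref{Intermediate Theorem}: Breiman's lemma gives, for $\nu^{\otimes\N}$-a.e.\ $b$, that $\tfrac1N\sum_{p\le N}\delta_{g_{b_p}\cdots g_{b_1}x}\to\mu_{\X'}$; one applies Theorem~\ref{SimmonsWeiss Thm2.2} with compact factor $K=\overline{\langle\widetilde O_e\rangle}$, cocycle $\kappa(g_e)=\widetilde O_e$, and continuous map $f((b_p)_p)=\bigl(-\log\rho_{b_0},\,\eta((b_p)_{p\in\N})\bigr)$ on $E^\Z$; then, averaging a test function $\phi\in\C_c(\X')$ against the resulting jointly equidistributed triples and substituting the cocycle identity above peels off the compact parts and the varying term $\eta(T^Nb)$, and converts the equidistribution of the random walk into $\tfrac1T\int_0^T\delta_{a_l u(\eta(b))x}\,dl\to\mu_{\X'}$ as $T\to\infty$, since $t_N(b)\to\infty$ with bounded gaps. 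As $\eta(b)$ ranges over $\gamma$-typical $A$ when $b$ is $\nu^{\otimes\N}$-typical, this says $u(A)x$ is Birkhoff generic for $\gamma$-a.e.\ $A$, which is the claim.

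The one genuinely hard point is the stationary-measure classification of the middle paragraph: extracting, from irreducibility of the IFS and contraction-on-average alone, the exclusion of every proper homogeneous stationary measure and every finite orbit. Once that is granted, the remainder is the bookkeeping already carried out in full for Theorem~\ref{Intermediate Theorem}.
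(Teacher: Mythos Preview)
The paper does not give a proof of this statement; it is quoted as \cite[Thm.~8.11]{SimmonsWeiss} and used as a black box (its only role is in the proof of Theorem~\ref{thm:Single Inhomogeneous case 1}, to show that the set $\mathcal{M}=\{A: u(A)\Gamma' \text{ is Birkhoff generic}\}$ has full measure for every Type~3 measure). So there is no ``paper's proof'' to compare against.

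Your outline is a faithful high-level sketch of the Simmons--Weiss strategy, and the parallelism with the proof of Theorem~\ref{Intermediate Theorem} is exactly the point. Two caveats are worth recording. First, as written in the paper the symbol $x$ is a stray; the cited result is for the base point $\Gamma'$, whereas your sketch implicitly aims at arbitrary $x\in\X'$, which is the stronger conclusion of Theorem~\ref{thm: genericity of Type 3' measures} (and needs the extra input of \cite{ProhaskaSertShi}). Second, your middle paragraph compresses the two genuinely delicate pieces of \cite{SimmonsWeiss}: the non-escape-of-mass argument (so that Breiman's weak limits on $\X'\cup\{\infty\}$ give no mass to $\infty$), and the exclusion of proper homogeneous stationary measures. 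The latter is not quite ``the walk would then stay bounded''; rather, in \cite{SimmonsWeiss} one analyzes the possible intermediate subgroups $H$ with $\{a_t\}\subset H\subsetneq G'$ supporting a closed $H$-orbit and shows that a stationary measure on such an orbit forces the IFS to preserve a proper affine subspace of $\Mat$, contradicting irreducibility. Granting those two inputs, the remainder of your outline (Breiman, Theorem~\ref{SimmonsWeiss Thm2.2}, and the cocycle bookkeeping) is correct and is indeed the template reused verbatim in the proof of Theorem~\ref{Intermediate Theorem}.
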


Let $\Phi$ be an irreducible compact algebraic similarity IFS on $\Mat$ and fix $\nu\in\Prob(E)$, contracting on average, such that $\supp(\nu) = E$. Assume that for some $e \in E$, we have $\phi_e(0)= 0$. Let $\gamma = \eta_*(\nu){\otimes \N}$. Suppose for all $e \in E$, we have
$$
\phi_e(A)= \rho_e O_e A O_e' + B_e.  
$$
Define for all $e \in E$, the element $g_e \in G'$ as 
$$
g_e = \begin{pmatrix}
    \rho_e^{-n/m+n}O_e^{-1} & - \rho_e^{-n/m+n}O_e^{-1}B_e  \\ & \rho_e^{m/m+n} O_e'
\end{pmatrix}.
$$
Identifying $e$ with $g_e$, we may assume that $E \subset G'$. Then $\nu$ can be treated as a measure on $G$. It was proved in \cite[Section 10.1]{SimmonsWeiss} that the Zariski closure of the closed subgroup generated by $\supp(\mu)$ contains the group $u(\Mat)$. Also, it is easy to see that
$$
b_p \cdots b_1= \begin{pmatrix}
    O_{b_p}^{-1} \cdots O_{b_1}^{-1} \\ &O_{b_p}^{'} \cdots O_{b_1}^{'}
\end{pmatrix} \begin{pmatrix}
    (\rho_{b_p} \cdots \rho_{b_1})^{-n/m+n}I_m \\ & (\rho_{b_p} \cdots \rho_{b_1})^{-n/m+n}I_n
\end{pmatrix} 
$$
$$\times u(\phi_{b_p} \circ \cdots \circ \phi_{b_1}(0)).$$
The map $(b_p)_p \mapsto \lim_{p \rightarrow \infty} \phi_{b_p} \circ \cdots \circ \phi_{b_1}(0)$ therefore equals $\eta$. Thus, the conditions of \cite[Thm.~1.11]{ProhaskaSertShi} are satisfied and it yields the following result in our special case:

\begin{thm}
\label{thm: genericity of Type 3' measures}
    Fix $\gamma$ a measure of {\em Type 3'}. Then, for any $x \in \X'$ and for $\gamma$-a.e. $A\in \Mat$, we have 
    $$
    \frac{1}{T} \int_{0}^T \delta_{a_tu(A) x} \rightarrow \mu_{\Bar{G'.x}},
    $$
    where $\Bar{G'.x}$ denote the closure of $G'$ orbit of $x$. Note that by Ratner's Theorem, we know that $\Bar{G'.x}$ is a homogeneous space for any $x \in \X$. The measure $\mu_{\Bar{G'.x}}$ denotes the homogeneous probability measure on $\Bar{G'.x}$.
\end{thm}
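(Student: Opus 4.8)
The plan is to obtain the statement directly from \cite[Thm.~1.11]{ProhaskaSertShi}, via the dictionary between the algebraic similarity IFS $\Phi$ and the compactly supported random walk on $G'$ generated by $\nu$ (under the identification $e\leftrightarrow g_e$) that is set up just above the statement. The starting point is the factorisation of $b_p\cdots b_1$ displayed above: up to a block-diagonal compact factor, $b_p\cdots b_1$ equals $a_{t_p(b)}\,u(\phi_{b_p}\circ\cdots\circ\phi_{b_1}(0))$, where $t_p(b)=\tfrac{n}{m+n}\sum_{i=1}^{p}(-\log\rho_{b_i})$. Since $\nu$ is contracting on average, the ergodic theorem gives $t_p(b)\to\infty$ with bounded increments $t_{p+1}(b)-t_p(b)$ for $\nu^{\otimes \N}$-a.e.\ $b$, and $\phi_{b_p}\circ\cdots\circ\phi_{b_1}(0)\to\eta(b)$; thus the walk is, up to bounded compact corrections and a uniformly comparable time reparametrisation, the diagonal flow $\{a_t\}$ evaluated along $u(\eta(b))x$. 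This is exactly the bridge between the walk and the time average $\tfrac{1}{T}\int_0^T\delta_{a_tu(A)x}\,dt$ appearing in the statement.

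I would then verify the hypotheses of \cite[Thm.~1.11]{ProhaskaSertShi}. The expansion-on-average hypothesis on $\nu$ is precisely the translate of contracting-on-average for $\Phi$. The non-degeneracy (Zariski-density / non-constraining-pencil) hypothesis is supplied by \cite[Section~10.1]{SimmonsWeiss}: irreducibility of a compact algebraic similarity IFS forces the Zariski closure of the subgroup $\Lambda$ generated by $\supp(\nu)$ to contain $u(\Mat)$, and, together with the diagonal and compact factors present in the $g_e$, this is the genericity needed to rule out trapping in a proper invariant subvariety; the compact factors $O_e,O_e'$ are handled, as in the proof of Theorem \ref{Random Genericity}, by passing to the closure $K$ of the group they generate. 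Finally the Type~$3'$ assumption, that $\phi_e(0)=0$ for some $e$, means the corresponding $g_e$ is block-diagonal (has trivial unipotent component), which is the hypothesis that upgrades the conclusion from holding for a.e.\ $x$ to holding for every $x\in\X'$, exactly as in \cite{SimmonsWeiss}. With these in place, \cite[Thm.~1.11]{ProhaskaSertShi} (which itself rests on the Eskin--Lindenstrauss measure classification for expanding walks together with the Simmons--Weiss flow-versus-walk strategy) gives, for every $x\in\X'$ and $\gamma$-a.e.\ $A\in\Mat$, the equidistribution $\tfrac{1}{T}\int_0^T\delta_{a_tu(A)x}\,dt\to\mu_{\overline{G'.x}}$; since $G'$ acts transitively on $\X'=G'/\Gamma'$ this orbit closure is all of $\X'$ and $\mu_{\overline{G'.x}}=\mu_{\X'}$, but I keep the orbit-closure notation to match the general form of the cited theorem.

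If \cite[Thm.~1.11]{ProhaskaSertShi} is only invoked in its random-walk form --- namely that $\tfrac{1}{N}\sum_{p=1}^N\delta_{b_p\cdots b_1 x}\to\mu_{\overline{G'.x}}$ for $\nu^{\otimes \N}$-a.e.\ $b$ --- then the remaining task is to convert this Ces\`aro average into the time average, which I would carry out exactly as in the proof of Theorem \ref{Intermediate Theorem}: apply Theorem \ref{SimmonsWeiss Thm2.2} with $Y=\R\times\Mat$ and $f((b_p)_p)=(-\log\rho_{b_0},\phi_{b_0}(0))$, rewrite $a_{t_p(b)}u(\eta(b))x$ in terms of $b_p\cdots b_1 x$ using the factorisation, test against an arbitrary $\phi\in\C_c(\X')$, and telescope $\int_0^{t_N(b)}\phi(a_l u(\eta(b))x)\,dl$ over the intervals $[t_{j-1}(b),t_j(b)]$; boundedness of the increments then turns the resulting limit along $t_N(b)$ into the full time-average limit. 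The main obstacle I expect is the non-degeneracy verification: matching the Zariski-density / non-pencil hypothesis of \cite[Thm.~1.11]{ProhaskaSertShi} with what irreducibility of the IFS actually yields is genuinely the content of \cite[Section~10.1]{SimmonsWeiss}, not a formal matter, and one must be careful that the compact factors $O_e,O_e'$ do not cut down the relevant closure.
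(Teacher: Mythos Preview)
Your proposal is correct and follows essentially the same route as the paper: the paper's entire argument is the paragraph preceding the theorem statement, which sets up the identification $e\leftrightarrow g_e$, records the factorisation of $b_p\cdots b_1$, cites \cite[Section~10.1]{SimmonsWeiss} for the Zariski-density input, and then simply invokes \cite[Thm.~1.11]{ProhaskaSertShi}. Your fallback plan (converting a walk conclusion into a flow conclusion via Theorem~\ref{SimmonsWeiss Thm2.2}) is unnecessary here, since the cited theorem already delivers the flow equidistribution directly.

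One remark on your closing observation: you note that $G'$ acts transitively on $\X'$, so $\overline{G'.x}=\X'$ and the orbit-closure notation is redundant. You are right about the statement as written, but be aware that the paper actually \emph{applies} this theorem (in the proof of Theorem~\ref{thm:Single Inhomogeneous case 2}) with $x=[I_{m+n},\tilde b]\Gamma\in\X$, the space of affine lattices, where the $G'$-orbit closure is genuinely nontrivial and depends on whether $b\in\Q^m$; this is where Ratner's theorem is doing real work. The ``$x\in\X'$'' in the statement appears to be a typo for ``$x\in\X$'', so the orbit-closure formulation is not merely cosmetic.
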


\subsection{Proof of Theorem \ref{thm:Single Inhomogeneous case 1}}
\begin{proof}[Proof of Theorem \ref{thm:Single Inhomogeneous case 1}]
    Let $\mathcal{M}= \{A \in \Mat: u(A)\Gamma' \text{ is Birkhoff generic}\}$. Clearly, by Theorems \ref{thm: genericity of Type 2 measures} and \ref{thm: genericity of Type 3 measures}, we have that $\mathcal{M}$ gets full measure with respect to each measure of {\em Type 2} and {Type 3}.

    Also, it is clear that if for some $A$ we have $\GenA \neq \emptyset$, then we have for $b \in \GenA$ ,
    \begin{align}
    \label{eq:s1}
         \frac{1}{T} \int_{0}^T \delta_{a_l [u(A), \Tilde{b}]} \rightarrow \mu_{\X},
    \end{align}
    where $\Tilde{b} \in \R^{m+n}$ equals $\begin{pmatrix}
        b \\ 0
    \end{pmatrix}$. Applying the map $\pi_*$ to \eqref{eq:s1}, we get that $ u(A)\Gamma' \text{ is Birkhoff generic}$, i.e, $A \in \mathcal{M}$. This proves the last claim.

   Fix $A \in \mathcal{M}$. The fact that $\alpha(\GenA)= 1$ for all measures of {\em Type 1} follows from Theorem \ref{Intermediate Theorem}. Now using the Dani correspondence, we get from $\alpha(\GenA)= 1$, that $\alpha(\BadA)= \alpha(\DirichletA) = 0$. This proves the Theorem.
\end{proof}

\subsection{Proof of Theorem \ref{thm:Single Inhomogeneous case 2}}
\begin{proof}[Proof of Theorem \ref{thm:Single Inhomogeneous case 2}]
Fix $\gamma$ a measure of {\em Type 3} and let $b \in \R^m$. Let $\Tilde{b}= \begin{pmatrix}
        b \\ 0
    \end{pmatrix}$.
    Firstly, assume that $b \notin \Q^m$. Let $\Tilde{b}= \begin{pmatrix}
        b \\ 0
    \end{pmatrix}$. Then, by Theorem \ref{thm: genericity of Type 3' measures} we get that for $\gamma$-a.e. $A \in \Mat$, the element $[u(A), \Tilde{b}]\Gamma$ is Birkhoff generic. Thus, $\gamma(\Genb)= 1$. By Dani's correspondence, we get that $\gamma(\Badb)= \gamma(\Dirichletb)= 0$.

    Now, assume that $b \in \Q^m$. Again, using Theorem \ref{thm: genericity of Type 3' measures} we get that for $\gamma$-a.e. $A \in \Mat$, the closure of set $ a_t[u(A), \Tilde{b}]\Gamma$ is $\Bar{G'.[I_{m+n}, \Tilde{b}]}$. Again, by Dani's correspondence, we get that $\gamma(\Badb)= \gamma(\Dirichletb)= 0$. 
\end{proof}

\bibliography{Biblio}
\end{document}